\documentclass{article}
\usepackage{amssymb,latexsym,amsmath,amsthm}
\usepackage{fancyhdr}

\usepackage{color}
\usepackage{mathrsfs}

\usepackage{lipsum}

\newcommand\blfootnote[1]{%
  \begingroup
  \renewcommand\thefootnote{}\footnote{#1}%
  \addtocounter{footnote}{-1}%
  \endgroup
}

\numberwithin{equation}{section}
\theoremstyle{plain}
\newtheorem{lemma}{Lemma}
\newtheorem{theorem}[lemma]{Theorem}

\theoremstyle{definition}
\newtheorem*{remark*}{Remark}

\numberwithin{lemma}{section}

\newcommand{\ep}{\varepsilon}
\newcommand{\N}{\mathbb{N}}
\newcommand{\R}{\mathbb{R}}
\newcommand{\Z}{\mathbb{Z}}
\newcommand{\Q}{\mathbb{Q}}

\newcommand{\CC}{\mathcal{C}}
\newcommand{\twosum}[2]{\sum_{\substack{#1\\#2}}}
\newcommand{\beql}[1]{\begin{equation}\label{#1}}
\newcommand{\eeq}{\end{equation}}

\newcommand{\modd}[1]{\; ( \text{mod} \; #1)}

\newcommand{\sL}{\mathsf{\Lambda}}

\lhead{}\rhead{}
\chead{Counting Square-full Solutions to $x+y=z$}

\lfoot{} \rfoot{}

\setlength{\headsep}{8pt}

\newcommand{\card}{\#}

\begin{document}

\title{Counting Square-full Solutions to $x+y=z$}  
\author{D. R. Heath-Brown\\
Mathematical Institute, Oxford }
            
  \date{}
  \maketitle
  
\begin{abstract}
We show that there are $O(B^{3/5-3/1555+\ep})$ triples $(x,y,z)$ of square-full integesr up to $B$
satisfying the equation $x+y=z$ for any fixed $\ep>0$. This is the first improvement over the `easy'
exponent $3/5$, given by Browning and Van Valckenborgh \cite{BVV}. One new tool is a strong uniform bound
for the counting function for equations $aX^3+bY^3=cZ^3$.
\end{abstract}
\noindent
\blfootnote{2020 Mathematics Subject Classification:
11D45}
\blfootnote{Keywords: Square-full integers, Campana points, Linear relation, Counting function,
Diagonal cubic curve}
\newpage

\section{Introduction}

This paper is concerned with positive integer solutions to the equation $u+v=w$ in which $u,v$ and $w$ 
are square-full. We remind the reader that a positive integer $n$ is said to be square-full if $p^2|n$ for 
every prime $p$ dividing $n$. One may easily check that such an integer may be written uniquely as 
$x^2y^3$ where $x$ and $y$ are positive integers, with $y$ being square-free.

Apart from the intrinsic interest of square-full solutions to $u+v=w$, the equation arises in the study of
rational points on Campana orbifolds. For the divisor
\[\Delta=\tfrac12[0]+\tfrac12[1]+\tfrac12[\infty]\]
the density of rational points of height at most $B$ on the orbifold $(\mathbb{P}^1,\Delta)$ can be understood
via the counting function
\[n_{\mathrm{prim}}(B)=\card\{(u,v,w)\in\N_{\mathrm{prim}}^3: u+v=w;\, u,v,w\mbox{ square-full};\, w\le B\},\]
where $\N_{\mathrm{prim}}^3$ is the set of triples $(u,v,w)\in\N^3$ with $\gcd(u,v,w)=1$. The question
of the behaviour of $n_{\mathrm{prim}}(B)$ seems first to have been raised by Poonen \cite{poonen}, who asks
whether $n_{\mathrm{prim}}(B)\sim B^{1/2}$, but the reader should note that Poonen is not using the notation
$\sim$ in precisely the same sense as is usual in analytic number theory. 

The function $n_{\mathrm{prim}}(B)$ was investigated further by Browning and Van Valckenborgh \cite{BVV},
who conjectured that $n_{\mathrm{prim}}(B)\sim c_0B^{1/2}$ for a certain explicit constant $c_0>0$. They 
viewed the equation $u+v=w$ as $x_1^2y_1^3+x_2^2y_2^3=x_3^2y_3^3$, giving
\[n_{\mathrm{prim}}(B)=\sum_{y_1,y_2,y_3}\card\{(x_1,x_2,x_3): x_1^2y_1^3+x_2^2y_2^3=x_3^2y_3^3\},\]
and noted that, for fixed values of $y_1,y_2,y_3$ there are asymptotically $c(y_1,y_2,y_3)B^{1/2}$ solutions
$(x_1,x_2,x_3)$, for appropriate constants $c(y_1,y_2,y_3)$. This produces their conjecture, with
\[c_0=\sum_{y_1,y_2,y_3}c(y_1,y_2,y_3).\]
They showed further that
\[\liminf_{B\to\infty} B^{-1/2}n_{\mathrm{prim}}(B)\ge c_0,\]
so that it is the question of upper bounds for $n_{\mathrm{prim}}(B)$ which remains. Here Browning and Van 
Valckenborgh showed that
\[n_{\mathrm{prim}}(B)\ll_\ep B^{3/5+\ep}\]
for any fixed $\ep>0$. They discuss the possibility of improvements, and comment that obtaining an upper bound
of the expected order of magnitude is ``much more challenging''. The question was investigated further by 
Zhao \cite{zhao}, who refined the estimate of Browning and Van Valckenborgh, showing that
\[n_{\mathrm{prim}}(B)\ll B^{3/5}(\log B)^9.\]

The goal of the present paper is to reduce the exponent $3/5$. It turns out that little extra effort is required to 
include solutions in which $\gcd(u,v,w)>1$. Thus we have the following result. 

\begin{theorem}\label{thm:main}
Let $n(B)$ be the number of triples $u,v,w\in\N$ of square-full integers, such that $u+v=w$ and
$w\le B$. Then
\[n(B)\ll_\ep B^{3/5-3/1555+\ep}\]
for any fixed $\ep>0$.
\end{theorem}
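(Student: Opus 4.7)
My plan is to write each square-full integer uniquely as $x^2y^3$ with $y$ square-free, so that $n(B)$ becomes the number of sextuples $(x_1,y_1,x_2,y_2,x_3,y_3)$ with $y_i$ square-free satisfying
\[
x_1^2 y_1^3 + x_2^2 y_2^3 = x_3^2 y_3^3 \le B.
\]
The heart of the method will be that this single relation admits two dual interpretations. For fixed $(y_1,y_2,y_3)$ it is a ternary quadratic form in $(x_1,x_2,x_3)$ with coefficients $y_i^3$, i.e.\ a conic, whose integer points I can bound by classical means. For fixed $(x_1,x_2,x_3)$ it is a diagonal ternary cubic in $(y_1,y_2,y_3)$ with coefficients $x_i^2$, a genus-one curve, and this is where the strong uniform bound for $aX^3+bY^3=cZ^3$ promised in the abstract will enter. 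The existing exponent $3/5$ of Browning and Van Valckenborgh uses only the conic viewpoint, so the improvement must come from combining both.

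First I would dyadically localize all six variables $x_i \asymp X_i$, $y_i \asymp Y_i$ subject to $X_i^2Y_i^3\ll B$ (with Möbius inversion to remove the square-freeness conditions, which should contribute only $B^\ep$). In each dyadic cell I apply whichever of the two estimates is stronger. When $\max_i Y_i$ is large, the conic has large coefficients and consequently few lattice points in the available box; summing the conic estimate over $(y_1,y_2,y_3)$ produces a contribution that becomes more favourable as $\max Y_i$ grows. When instead $\max_i Y_i$ is small, necessarily $\max_i X_i$ is large, and I fix $(x_1,x_2,x_3)$ and count $(y_1,y_2,y_3)$ using the new cubic bound, summing over the relatively few available triples $(x_1,x_2,x_3)$. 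A threshold parameter $T$ governs the cross-over between the two regimes; equating the two estimates determines $T$ and produces the precise exponent $3/5-3/1555$. The odd numerator $1555 = 5\cdot 311$ is simply what drops out of this optimisation once the exponents of $H,a,b,c$ in the cubic bound are inserted.

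The hard part will be the cubic estimate itself. A diagonal cubic $aX^3+bY^3=cZ^3$ is of genus one and can a priori carry many rational points, so one needs a genuinely uniform bound for its integer points of bounded height, with strong polynomial dependence on the coefficients $a,b,c$ --- presumably via Heath-Brown's determinant method adapted to this curve --- and the strength of that bound will dictate the specific saving $3/1555$. A secondary nuisance, which I expect to be merely bookkeeping, is tracking common factors among the $x_i$ and $y_i$, and handling the equidistribution of the fixed-$x_i$ cubics across residue classes so that the uniform bound can be applied without losing in averaging.
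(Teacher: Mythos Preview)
Your dichotomy --- fix $(y_1,y_2,y_3)$ and bound conic points, or fix $(x_1,x_2,x_3)$ and bound cubic points --- cannot break the exponent $3/5$. In a dyadic cell with $X_i^2Y_i^3\ll B$, set $X=X_1X_2X_3$ and $Y=Y_1Y_2Y_3$, so that $X^2Y^3\ll B^3$. The conic bound of Browning--Heath-Brown gives, after summing over $(y_1,y_2,y_3)$, a contribution $\ll_\ep(Y+X^{1/3})B^\ep$, while the uniform cubic estimate of Theorem~\ref{cubic} is $\rho(B;a,b,c)\ll_\ep B^\ep$ with \emph{no} dependence on $a,b,c$ to exploit; summing it over $(x_1,x_2,x_3)$ yields $\ll_\ep XB^\ep$. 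Hence the cell contributes $\ll_\ep\bigl(\min(X,Y)+B^{1/2}\bigr)B^\ep$, and $X^2Y^3\ll B^3$ forces $\min(X,Y)\ll B^{3/5}$, with equality exactly when every $X_i,Y_i$ is of order $B^{1/5}$. This is the ``critical case'' isolated in Lemma~\ref{crit}, and your optimisation lands precisely on $3/5$ there. Nor is a negative-power-of-$|abc|$ saving available in the cubic bound: equations such as $(n^2-1)X^3+Y^3=n^2Z^3$ admit the solution $(1,1,1)$ for every $n$, so $\rho(1;a,b,c)\ge 1$ with $|abc|$ arbitrarily large.

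The paper's route past $3/5$ is substantially different from your sketch. In the critical regime it fixes $y_3=q$ together with a residue $\kappa\pmod q$, extracts from $q^3\mid a_1x_1^2y_1^3+a_2x_2^2y_2^3$ a tower of congruences modulo $q,q^2,q^3$ (Lemma~\ref{TheLattice}), and then applies a Siegel-lemma/projective-slicing argument (Lemma~\ref{siegel}) to cover the admissible $(x_1,x_2,y_1,y_2)$ by roughly $B^{2/15}$ two-dimensional lattices $\mathsf{M}\subset\Z^4$ (Lemma~\ref{lemma:qkappa}). On each $\mathsf{M}$ the problem becomes ``when is a quintic form $F(u,v)$ a square?'', and this is handled by the square sieve (Lemmas~\ref{quintic}--\ref{except}). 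Theorem~\ref{cubic} enters only indirectly, in Lemma~\ref{gensize}, to control how many of these lattices can have a short basis vector. The lattice covering, the congruence tower, and the square sieve are the ingredients that produce the saving $3/1555$, and none of them is present in your plan.
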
 

The improvement, by $3/1555$, is disappointingly small. However it is hoped that the ideas in this
paper may inspire others to make more substantial progress now that an initial dent in the problem 
has been made.

Theorem \ref{thm:main} will be an easy consequence of the following result.
\begin{theorem}\label{thm2}
Let $a_1,a_2,a_3$ be non-zero square-free integers (of either sign) which are coprime in
pairs. Let $n(B;a_1,a_2,a_3)$ be the number of positive integer solutions
$x_1,x_2,x_3,y_1,y_2,y_3$ of the equation
\beql{axy}
a_1x_1^2y_1^3+a_2x_2^2y_2^3+a_3x_3^2y_3^3=0
\eeq
with $a_1y_1,a_2y_2$ and $a_3y_3$ square-free, and for which
\beql{range}
|a_1|x_1^2y_1^3,|a_2|x_2^2y_2^3,|a_3|x_3^2y_3^3\le B
\eeq
and
\beql{gcd}
\gcd(x_1y_1\,,\,x_2y_2\,,\,x_3y_3)=\gcd(a_1a_2a_3\,,\,x_1x_2x_3y_1y_2y_3)=1.
\eeq
Then $n(B;a_1,a_2,a_3)\ll_\ep B^{3/5-3/1555+\ep}$, for any fixed $\ep>0$.
\end{theorem}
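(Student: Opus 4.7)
The plan is to exploit the dual readings of equation \eqref{axy} --- as a ternary quadratic form in $(x_1,x_2,x_3)$ once $(y_1,y_2,y_3)$ is fixed, and as a diagonal ternary cubic in $(y_1,y_2,y_3)$ once $(x_1,x_2,x_3)$ is fixed --- and to combine, via a dyadic decomposition, the conic-counting bound used by Browning--Van Valckenborgh and Zhao with the paper's new uniform bound for diagonal cubics $aX^3+bY^3=cZ^3$ (advertised in the abstract).

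First I would decompose dyadically: write $x_i\asymp X_i$ and $y_i\asymp Y_i$ with $X_i^2Y_i^3\ll B$ for each $i$. Up to a factor $B^\ep$ from the $O((\log B)^6)$ choices of box, it suffices to bound the count in a single dyadic box. There are then two natural strategies. The \emph{conic strategy} fixes $(y_1,y_2,y_3)$ in the box and counts integer solutions $(x_1,x_2,x_3)$ with $x_i\asymp X_i$ to the conic $\sum_i a_iy_i^3 x_i^2=0$; summing a uniform conic estimate over all $(y_1,y_2,y_3)$ in the box reproduces a bound of the shape $B^{3/5+\ep}$, attained at a specific ratio of $X_i$ to $Y_i$. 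The \emph{cubic strategy} instead fixes $(x_1,x_2,x_3)$ in the box and counts integer solutions $(y_1,y_2,y_3)$ with $y_i\asymp Y_i$ to the diagonal cubic $\sum_i a_ix_i^2 y_i^3=0$, invoking the paper's new uniform cubic bound with coefficients of size $\asymp X_i^2$ and target height $\asymp Y_i$. The two strategies are complementary: the conic strategy is wasteful in boxes where the $X_i$'s are large (since then the conic has many integer points per curve), whereas the cubic strategy is strongest in exactly those boxes (since the cubic coefficients dwarf the target height).

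I would then introduce a threshold separating the ``conic regime'' from the ``cubic regime'' of dyadic boxes, apply each strategy in its home range, and optimise the threshold to produce the composite exponent $3/5-3/1555$. The numerology $1555 = 5\cdot 311$ strongly suggests that the improvement comes from a narrow band of dyadic boxes in which the cubic estimate just barely beats the conic estimate, with the size of the saving dictated by the precise shape of the new cubic bound.

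The principal obstacle is the diagonal cubic bound itself: its joint dependence on the coefficients $a,b,c$ and on the target height must be sharp enough to yield $3/1555$ over the Browning--Van Valckenborgh input after the optimisation. I would expect its proof (developed elsewhere in the paper) to combine a descent, reducing to an auxiliary curve of controlled conductor, with a $p$-adic determinant-method argument on that curve. A secondary but delicate task is the bookkeeping required to propagate the coprimality and square-freeness conditions of \eqref{gcd} cleanly through both reductions, via Möbius inversion uniformly in all dyadic parameters, so that the small saving $3/1555$ survives.
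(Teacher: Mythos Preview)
Your plan has a genuine gap: the conic and cubic strategies do not beat $3/5$ when combined. In a dyadic box with $x_i\asymp X_i$, $y_i\asymp Y_i$ and $X_i^2Y_i^3\ll B$, the conic strategy gives $N_0(B)\ll_\ep (Y_1Y_2Y_3+(X_1X_2X_3)^{1/3})B^\ep$ (this is Lemma~\ref{YYY}), while your cubic strategy---fixing $(x_1,x_2,x_3)$ and invoking Theorem~\ref{cubic}---gives $N_0(B)\ll_\ep X_1X_2X_3 B^\ep$. At the critical box $X_i\asymp Y_i\asymp B^{1/5}$ both bounds are $\asymp B^{3/5}$, so no threshold optimisation between the two regimes can produce an exponent below $3/5$. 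The numerology $1555=5\cdot 311$ is not explained by this dichotomy; it arises from a much more delicate balance.

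What the paper actually does in the critical range is entirely different. One fixes $y_3=q$ and reads \eqref{axy} as a congruence modulo $q^3$; a Siegel-type projective slicing argument (Lemma~\ref{siegel}) then shows that the relevant $(x_1,x_2,y_1,y_2)$ are covered by $O_\ep(B^{2/15+\ep})$ two-dimensional lattices $\mathsf{M}\subset\Z^4$ per choice of $(q,\kappa)$. On each lattice the equation becomes $F(u,v)=\square$ for a binary quintic, and the square-sieve (Lemmas~\ref{quintic}--\ref{q3}) bounds the points. The hard work is controlling how many lattices $\mathsf{M}$ can have \emph{small} determinant, and it is here---not as a direct counting tool for $(y_1,y_2,y_3)$---that Theorem~\ref{cubic} enters: a short basis vector $(x_1,x_2,y_1,y_2)\in\mathsf{M}$ satisfies $18(a_1x_1^2y_1^3+a_2x_2^2y_2^3)=fq^3r^2$, and Theorem~\ref{cubic} bounds the number of $(y_1,y_2,q)$ solving this diagonal cubic for given $x_1,x_2,f,r$ (Lemma~\ref{gensize}). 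The exponent $186/311$ comes from balancing the square-sieve saving against the lattice-determinant estimates and then symmetrising over the three indices.
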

To see how Theorem \ref{thm:main} follows from
Theorem \ref{thm2} suppose that $u+v=w$ with positive square-full integers $u,v,w$,
and let $h=\gcd(u,v,w)$. Then $h$ must be square-full, and $uh^{-1}$ must factor as $\prod p^e$
with $e\ge 2$ except possibly for primes $p| h$. Since every square-full integer may be 
written as a product $x^2y^3$ with $y$ square-free, we conclude that $u=ha_1x_1^2y_1^3$ for some 
integer $a_1|h$ coprime to $x_1y_1$, with $a_1y_1$ square-free. We may argue similarly for $v$ 
and $w$ and conclude that
\begin{eqnarray*}
n(B)&\le&\twosum{h\le B}{h\;\mathrm{square-full}}\sum_{a_1,a_2,a_3|h}n(B/h;a_1,a_2,a_3)\\
&\ll_\ep&\twosum{h\le B}{h\;\mathrm{square-full}} \left(\frac{B}{h}\right)^{3/5-3/1555+\ep}\tau(h)^3\\
&\ll_\ep& B^{3/5-3/1555+\ep},
\end{eqnarray*}
as required for Theorem \ref{thm:main}. Here $a_1,a_2,a_3$ are allowed to take both positive 
and negative values, and we use the fact that if $h$ runs over square-full
integers then $\sum_h h^{-\theta}<\infty$ whenever $\theta>\tfrac12$.
\bigskip

The main element in our strategy to prove Theorem \ref{thm2} is to consider the bi-homgeneous
congruence
\[a_1x_1^2y_1^3+a_2x_2^2y_2^3\equiv 0\modd{y_3^3}.\]
After various manoeuvres we find that $(x_1,x_2,y_1,y_2)$ lies on one of a small number of lines
in $\mathbb{P}^3(\mathbb{Q})$ or, more precisely, on 2-dimensional lattices $\mathsf{M}\subset\Z^4$.
These produce equations of the shape 
\[F(u,v)=-a_3y_3^3\square,\]
where $F$ is a form of degree 5, and these are handled via the ``square-sieve''.  One needs to know 
about the size of $\det(\mathsf{M})$ for these lattices, which determines the range for the variables 
$u$ and $v$ above. Throughout these steps there are various special cases that arise which require
separate treatment.

One auxiliary result deserves mention here.
\begin{theorem}\label{cubic}
Let $\rho(B;a,b,c)$ be the number of integer solutions $x,y,z$ to the equation
\beql{ceq}
ax^3+by^3+cz^3=0
\eeq	
with $\gcd(x,y,z)=1$ and $\max(|x|,|y|,|z|)\le B$. Then
\[\rho(B;a,b,c)\ll_\ep B^\ep\]
for any $\ep>0$, uniformly for all non-zero integer $a,b,c$.
\end{theorem}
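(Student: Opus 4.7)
The equation $aX^3+bY^3+cZ^3=0$ with $abc\ne 0$ defines a smooth plane cubic curve $C_{a,b,c}$, a torsor under its Jacobian elliptic curve $E_{a,b,c}$. Since the form is diagonal, $E_{a,b,c}$ has $j$-invariant zero and hence CM by $\Z[\omega]$, with $\omega=e^{2\pi i/3}$. My plan is to couple this rigid arithmetic structure with a Silverman/Evertse-type uniform bound for $S$-integer points on elliptic curves.

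As a first reduction, I would dispose of the range $\max(|a|,|b|,|c|)\gg B^3$ directly: for $(x,y,z)$ with $\max\le B$, one of the three monomials would dominate the others strictly, forcing a variable to vanish, and the residual equations $ax^3+by^3=0$ etc.\ admit only $O(1)$ coprime solutions. So one may assume $|abc|\le B^{O(1)}$, and in particular $\omega(abc)\ll\log B/\log\log B$.

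Next, I would set up a finite-to-one correspondence (with absolute multiplicity) between coprime triples $(x,y,z)$ satisfying the equation and $S$-integer points on a fixed Weierstrass model of $E_{a,b,c}$, where $S=\{\infty\}\cup\{p:p\mid 6abc\}$. A $3$-isogeny descent, carried out over $\Q(\omega)$ using the $\Z[\omega]$-action on $E_{a,b,c}$, bounds the Selmer group by a product of local cube classes and gives $\mathrm{rank}\,E_{a,b,c}(\Q)\ll\omega(abc)$. Combining this with the uniform bound
\[
\#\{S\text{-integer points on }E_{a,b,c}\}\;\ll\; C_0^{\,|S|+\mathrm{rank}\,E_{a,b,c}(\Q)}
\]
for an absolute constant $C_0$ yields $C_0^{O(\omega(abc))}=B^{o(1)}\ll_\ep B^\ep$, which is the required bound.

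The main obstacle will be making the passage from coprime triples on the torsor $C_{a,b,c}$ to $S$-integer points on $E_{a,b,c}$ uniform in $a,b,c$, especially when $C_{a,b,c}(\Q)=\emptyset$, so the isomorphism with $E_{a,b,c}$ is only defined over a cubic or sextic extension. One workable route is to run the argument directly on the torsor via a unit-equation formalism in the cubic field $\Q(\sqrt[3]{-b/a})$ (or $\Q(\omega,\sqrt[3]{-b/a})$), where each coprime triple gives rise to a solution of a three-term unit/$S$-integer equation whose count is $C^{|S|}$-bounded by Evertse. A secondary concern is verifying the uniformity of the $3$-descent rank estimate across all coprime $(a,b,c)$; this should be straightforward from an explicit local computation exploiting the CM structure, with the only delicate points being the primes dividing $6abc$.
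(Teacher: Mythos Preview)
Your intuition about the rank bound is exactly right, and it matches the paper: the Jacobian is $E_M:Y^2=X^3-3M^2$ for $M$ the cube-free part of $12abc$, and a $3$-isogeny descent (Lemma~\ref{rank}) gives $r(M)\ll\omega(M)+1\ll\log(3M)/\log\log(3M)$. The initial reduction to $|abc|\le B^{O(1)}$ is also in the paper, though the argument you sketch (``one monomial dominates'') does not work when all three of $|a|,|b|,|c|$ are large; the paper instead observes that two non-proportional solutions force $(a,b,c)$, up to scalar, to equal the cross product $(x_1^3,y_1^3,z_1^3)\wedge(x_2^3,y_2^3,z_2^3)$, giving $\max(|a|,|b|,|c|)\le 2B^6$ once $\gcd(a,b,c)=1$.

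The substantive gap is the passage to $S$-integer points. Coprime triples $(x,y,z)$ with $\max(|x|,|y|,|z|)\le B$ are precisely the rational points of naive height $\le B$ on the projective curve $C_{a,b,c}$; there is no mechanism by which the height bound becomes an $S$-integrality condition for $S=\{p:p\mid 6abc\}$. If you pass to the affine patch $z=1$, the point $(x/z,y/z)$ has denominators supported on the primes dividing $z$, and $z$ can involve arbitrary primes up to $B$. The same obstruction blocks the unit-equation route: writing $(-a/c)(x/z)^3+(-b/c)(y/z)^3=1$ does not produce $S$-units. So neither the Silverman bound $\ll C_0^{|S|+r}$ nor Evertse's bound applies to the objects being counted.

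What is needed instead is a uniform count of rational points of bounded height on a genus-one curve, and this is exactly what the paper imports from \cite{HBcub}: one has
\[
\rho(B;a,b,c)\ll\exp\!\left\{A\,\frac{r(M)\log B}{\log C(M)}\right\},
\]
which comes from N\'eron's $(\log B)^{r/2}$ counting together with a canonical-height lower bound of the shape $\hat h(P)\gg\log C(M)$. Since $M$ is cube-free one has $\log C(M)\gg\log 3M$, and combining with $r(M)\ll\log(3M)/\log\log(3M)$ gives $\rho\ll\exp\bigl(O(\log B/\log\log M)\bigr)$, which is $\ll_\ep B^\ep$ once $M\ge M(\ep)$. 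The finitely many remaining $M$ are handled by the non-uniform bound $\rho\ll_{a,b,c}(\log B)^{r(M)/2}$. In short, the missing ingredient in your plan is the canonical-height/conductor input replacing the inapplicable $S$-integer machinery.
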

It is clear from the proof that one may replace $B^\ep$ by $B^{\ep(B)}$ with some explicit function
$\ep(B)$ tending to zero, but we shall not use this fact.  

For a general cubic form $C(x,y,z)$ with at least one non-trivial integral zero the corresponding 
counting function $\rho_C(B)$ will be asymptotically $c_B(\log B)^{r/2}$, where $r$ is the rank
of the Jacobian of $C$. It then follows that $\rho_C(B)\ll_{C,\ep}B^\ep$. It was shown by the author
\cite[Theorem 3]{HBcub} that one can remove the dependence on $C$ under a certain
``Rank Hypothesis''. This states that if $r(E)$ and $C_E$ denote the rank and conductor of an
elliptic curve $E$, then $r(E)=o(\log C_E)$. What is
special about our particular situation is that we can estimate the rank of the relevant elliptic curves
quite efficiently.

{\em Acknowledgement.} The author is particularly grateful to Tim Browning, for introducing 
him to this problem, explaining its background, providing references, and for pointing out
minor errors in an earlier draft of this paper.

\section{Auxiliary Results}\label{AR}

We begin  by proving Theorem \ref{cubic}.  As mentioned in the introduction, we will use a good bound
for the ranks of certain elliptic curves.
\begin{lemma}\label{rank}
Let $M\in\N$ be cube-free. Then if we write $r(M)$ for the rank of the elliptic curve
\[E_M: Y^2=X^3-3M^2 \]
we will have
\[r(M)\ll\ \omega(M)+1\ll\frac{\log 3M}{\log\log 3M}.\]
\end{lemma}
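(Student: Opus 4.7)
My plan is to perform a $3$-descent on $E_M$ over $K = \Q(\sqrt{-3}) = \Q(\zeta_3)$. The curve $E_M$ has $j$-invariant $0$ and so admits complex multiplication by $\mathcal{O}_K = \Z[\zeta_3]$, which happens to be a principal ideal domain. In particular $\pi := 1 - \zeta_3$, an algebraic integer of norm $3$, acts as a $K$-rational isogeny $\psi : E_M \to E_M$ of degree $3$. Its kernel is a cyclic order-$3$ subgroup, and a direct check shows that the $3$-torsion point $T = (0,\, M\sqrt{-3})$ is $K$-rational (being a flex of the cubic), so $\ker\psi = \{O, T, -T\}$.

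Since $r(M) = r_\Q(E_M) \le r_K(E_M)$, it suffices to bound the rank over $K$. The standard $\psi$-descent gives
\[
r_K(E_M) \;\le\; 2\,\dim_{\mathbb{F}_3}\mathrm{Sel}^{\psi}(E_M/K),
\]
the factor $2$ arising because the $\Z$-rank is twice the $\mathcal{O}_K$-rank. Identifying $\ker\psi$ with $\mu_3$ via $\zeta_3 \in K$, the descent map may be unwound to $(X,Y) \mapsto Y + M\sqrt{-3}$, valued in $K^*/(K^*)^3$. The factorisation
\[
(Y + M\sqrt{-3})(Y - M\sqrt{-3}) \;=\; X^3
\]
in the PID $\mathcal{O}_K$, together with the fact that the two factors can share prime ideal divisors only at primes above $2$, $3$, or a rational prime dividing $M$, forces the image to be a cube outside a set $S$ of primes of $\mathcal{O}_K$ with $|S| \le \omega(M) + O(1)$. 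Since $h_K = 1$ and $\mathrm{rank}\,\mathcal{O}_{K,S}^* = |S|$ by Dirichlet, the usual $S$-unit bound yields $|\mathrm{Sel}^{\psi}(E_M/K)| \le 3^{\omega(M) + O(1)}$, hence $r_K(E_M) \ll \omega(M) + 1$. The last inequality of the lemma is then the classical Hardy--Ramanujan bound $\omega(M) \ll \log(3M)/\log\log(3M)$.

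The step I would expect to require most care is verifying the local conditions at primes above $2$ and $3$, where $E_M$ has additive reduction: one has to check that the local image of the descent at these places is a subgroup of bounded index in $K_v^*/(K_v^*)^3$, so that these finitely many bad places contribute only an $O(1)$ term rather than inflating the Selmer rank. The cube-freeness hypothesis on $M$ is exactly what ensures that each prime $p \mid M$ contributes a bounded amount (independent of $M$) to the descent.
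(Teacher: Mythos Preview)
Your argument is correct. Both you and the paper carry out a $3$-descent, but the packaging is different. The paper works over $\Q$, using the rational $3$-isogeny $\alpha:E_M\to\widehat{E_M}$ (with $\widehat{E_M}:Y^2=X^3+81M^2$) together with its dual $\hat\alpha$, and quotes the explicit descent machinery of Cohen and Pazuki to bound $\card\mathrm{Im}(\alpha)$ and $\card\mathrm{Im}(\hat\alpha)$ separately; the relation $3^{r(M)+1}=\card\mathrm{Im}(\alpha)\cdot\card\mathrm{Im}(\hat\alpha)$ then gives the rank bound. You instead base-change to $K=\Q(\zeta_3)$, where the same kernel $\{O,(0,\pm M\sqrt{-3})\}$ is generated by the CM endomorphism $[\pi]=[1-\zeta_3]$, and run a single $\pi$-descent directly via the factorisation $(Y+M\sqrt{-3})(Y-M\sqrt{-3})=X^3$ in the PID $\mathcal{O}_K$. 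Your route is more self-contained (no black-box citation) at the cost of a harmless constant factor from the inequality $r_\Q\le r_K$; the paper's route keeps everything over $\Q$ and gives slightly sharper implied constants. Over $K$ the two descents are essentially the same object, since the paper's $\alpha$ becomes $[\pi]$ up to an automorphism after base change.

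Two small remarks. First, your caution about local conditions at $2$ and $3$ is unnecessary for the inequality you actually use: you only need $\mathrm{Sel}^{\psi}\subseteq K(S,3)$, which follows from good reduction outside $S$ without any delicate local analysis. Second, the cube-freeness of $M$ is not used in the rank bound itself (neither in your argument nor in the paper's); it enters only later, in the deduction of Theorem~\ref{cubic}, to ensure $\log C(M)\gg\log 3M$.
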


We first show how to deduce Theorem \ref{cubic}, and then go on to establish Lemma \ref{rank}.
\begin{proof}[Proof of Theorem \ref{cubic}]
Clearly we may suppose that $\gcd(a,b,c)=1$. The result is trivial unless there are at least two
non-proportional solutions $(x_1,y_1,z_1)$ and $(x_2,y_2,z_2)$, in which case $(a,b,c)$ is 
orthogonal to both $(x_1^3,y_1^3,z_1^3)$ and $(x_2^3,y_2^3,z_2^3)$. It then follows that $(a,b,c)$
is proportional to the vector product
\beql{wed}
(x_1^3,y_1^3,z_1^3)\wedge (x_2^3,y_2^3,z_2^3)=(y_1^3z_2^3-z_1^3y_2^3\,,
\,z_1^3x_2^3-x_1^3z_2^3\,,\,x_1^3y_2^3-y_1^3x_2^3).
\eeq
Since $\gcd(a,b,c)=1$ we deduce that (\ref{wed}) is an integer multiple of $(a,b,c)$, whence
\[\max(|a|,|b|,|c|)\le 2B^6.\]

We now follow the argument given by the author \cite[Proof of Theorem 3]{HBcub}. 
Suppose that $12abc=n^3M$ for some cube-free integer $M$. Then we may take the Jacobian of the 
curve (\ref{ceq}) to be $E_M$. If we write $C(M)$ for the conductor of $E_M$, the argument of 
\cite[Proof of Theorem 3]{HBcub} (see in particular the third displayed formula on 
\cite[page 24]{HBcub}) will yield
\[\rho(B;a,b,c)\ll \exp\left\{A\frac{r(M)\log B}{\log C(M)}\right\}\]
for some absolute constant $A$, where $r(M)$ is the rank of $E_M$ as in Lemma \ref{rank}. 
Since $M$ is cube-free we will have $\log C(M)\gg\log 3M$, whence Lemma \ref{rank} yields
\[\rho(B;a,b,c)\ll \exp\left\{O\left(\frac{\log B}{\log\log M}\right)\right\}.\]
This is sufficient for the theorem when $M\ge M(\ep)$ with a suitably large $M(\ep)$. For the remaining 
cases we use the fact that
\[\rho(B;a,b,c)\ll_{\ep,a,b,c} (\log B)^{r(M)/2}.\]
This follows from the argument leading up to \cite[Lemma 3]{HBcub}, together with N\'{e}ron's estimate
for the number of points of bounded height on an elliptic curve, see \cite[(12)]{HBcub} for example. Since
$a,b,c,M$ are all bounded in terms of $\ep$ this is enough to complete the proof of the theorem.
\end{proof}
\bigskip

We now move to the proof of Lemma \ref{rank}. The key point is that we can bound $r(M)$
efficiently using descent via 3-isogeny, as described by Cohen and Pazuki
\cite{CP} for example.  
\begin{proof}[Proof of Lemma \ref{rank}]
In the notation of Cohen and Pazuki, \cite[Proposition 2.2]{CP} states that
\beql{bec1}
3^{r(M)+1}=\card\mathrm{Im}(\alpha)\,\card\mathrm{Im}(\hat{\alpha}),
\eeq
where $\alpha$ and $\hat{\alpha}$ are the 3-descent maps for the curves $E_M$ and 
\[\widehat{E_M}:Y^2=X^3+81M^2\]
respectively. We can estimate $\card\mathrm{Im}(\hat{\alpha})$
by \cite[Theorem 3.1]{CP}, since $D$ (in the notation from \cite[Lemma 1.2]{CP}) will be 1. 
Moreover ``$b$'' is $9M$, and we see from \cite[Theorem 3.1, part (3)]{CP} that
\[\card\mathrm{Im}(\hat{\alpha})\le\card\{(u_1,u_2)\in\N^2: u_1u_2|18M\}=\tau_3(18M),\]
whence
\beql{bec2}
\log\left(\card\mathrm{Im}(\hat{\alpha})\right)\ll 1+\omega(M).
\eeq
We can estimate $\card\mathrm{Im}(\alpha)$ similarly using \cite[Theorem 4.1, part (1)]{CP},
where we will have $D=-3$.  Write $K=\Q(\sqrt{-3})$ and let $G_3$ be the 
subgroup of $K^*/{K^*}^3$ consisting of classes $[u]$ for which $N_{K/\Q}(u)\in{\Q^*}^3$. Then if 
$[u]\in G_3$ we may write $u=v^2\tau(v)$ for some $v\in K^*$, where $\tau$ is the non-trivial 
automorphism of $K$. With this notation \cite[Theorem 4.1, part (1)]{CP} says that 
$\mathrm{Im}(\alpha)$ consists of those classes $[u]=[v^2\tau(v)]$ such that the equation
\[2v_2X^3+2Dv_1Y^3+\frac{2M}{v_1^2-Dv_2^2}Z^3+6v_1X^2Y+6v_2DXY^2=0\]
has a non-zero solution $X,Y,Z\in\Z$, where $v=v_1+v_2\sqrt{-3}$. We now observe that 
any class $[u]\in G_3$ has a representation $[u]=[v^2\tau(v)]$ in which $v$ is a cube-free 
algebraic integer with no non-trivial divisor in $\N$. Moreover the equation above is equivalent to 
\[v^2\tau(v)W^3-v\tau(v)^2\tau(W)^3+2\sqrt{-3}MZ^3=0,\]
where $W=X+Y\sqrt{-3}$. If $\pi|N_{K/\Q}(v)$ is a prime of $K$ not dividing $6M$ then it divides 
exactly one of $v$ or $\tau(v)$, so that the above equation will not be locally solvable at $\pi$.
Thus $v$ must be composed solely of primes $\pi$ dividing $6M$, whence
\[\card\mathrm{Im}(\alpha)\le 3^{\omega_K(6M)}\le 3^{2+2\omega(M)}.\]
The lemma then follows from (\ref{bec1}) and (\ref{bec2}).
\end{proof}

Our next auxiliary result is an application of the ``square-sieve'' (see the author \cite{HBss}).
\begin{lemma}\label{quintic}
Let $F(X,Y)\in\Z[X,Y]$ be a non-zero form of odd degree $D\ge 3$, having no repeated factor. 
Let $U,V\ge 1$ and let $\mu_0(U,V)$ be the number of pairs of integers 
$x\in[-U,U]$ and  $y\in[-V,V]$ for which $F(x,y)$ is a non-zero square. Then 
\[\mu_0(U,V)\ll_D (UV)^{2/3}\log^2(2||F||)+U+V,\]
where $||F||$ is the maximum modulus of the coefficients of $F$.
\end{lemma}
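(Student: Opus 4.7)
The plan is to apply the author's square-sieve from \cite{HBss}. Fix a dyadic parameter $Q$, to be optimized later, and let $\mathcal{P}$ be the set of primes $p \in [Q, 2Q]$ not dividing the discriminant of $F$, so that $|\mathcal{P}| = P \gg Q/\log Q$ provided $Q$ exceeds some constant multiple of $\log \|F\|$. Applying the sieve to the multiset $\{F(x, y) : |x| \leq U,\, |y| \leq V,\, F(x, y) \neq 0\}$ yields
\[
\mu_0(U, V) \ll \frac{UV}{P} + \frac{1}{P^2}\sum_{\substack{p, q \in \mathcal{P} \\ p \neq q}}|T(p, q)| + O_D(U + V),
\]
where $T(p, q) = \sum_{|x| \leq U,\,|y| \leq V}\bigl(\tfrac{F(x, y)}{pq}\bigr)$ and the $O_D(U + V)$ absorbs the lattice points with $F(x, y) = 0$ (which, by the squarefreeness of $F$, are confined to at most $D$ lines through the origin).

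The heart of the argument is an estimate for $T(p, q)$ for distinct $p, q \in \mathcal{P}$. I would apply two-dimensional Fourier completion on $(\Z/pq\Z)^2$ to express $T(p, q)$ in terms of the complete character sum
\[
\tilde S(c, d) = \sum_{a, b \bmod pq}\Bigl(\tfrac{F(a, b)}{pq}\Bigr)\, e_{pq}(ca + db),
\]
which by the Chinese remainder theorem factors as a product of a mod-$p$ and a mod-$q$ sum. Because $F$ is squarefree mod $p$ and $p$ avoids the discriminant, the mod-$p$ double cover $s^2 = F(x, y)$ is geometrically irreducible with at worst isolated singularities, and Deligne's bounds for two-dimensional character sums yield $|\tilde S_p(c, d)| \ll_D p$ for $(c, d) \not\equiv (0, 0) \pmod p$; similarly mod $q$. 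Combined with the row-by-row Weil estimate $|\tilde S_p(0, 0)| \ll_D p^{3/2}$ and the standard bound $\sum_{c, d \bmod N}|\widehat{\mathbf 1}_{\text{box}}(c, d)| \ll N^2 \log^2 N$, Fourier completion then delivers
\[
|T(p, q)| \ll_D \frac{UV}{(pq)^{1/2}} + pq\,\log^2(pq),
\]
with the ``partial'' cases where $(c, d)$ is zero modulo only one of $p$ or $q$ giving strictly smaller contributions by a direct count.

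Inserting this into the sieve inequality and optimizing with $Q \asymp (UV)^{1/3}$ balances $UV/Q$ against $Q^2\log^2 Q$ and produces the target bound $\mu_0(U, V) \ll_D (UV)^{2/3}\log^2(2\|F\|) + U + V$; the parameter ranges where $(UV)^{1/3}$ fails to exceed $\log\|F\|$ or where $U + V$ dominates are handled by the trailing term. The main obstacle is securing the Deligne-type square-root cancellation $|\tilde S_p(c, d)| \ll_D p$ uniformly in $F$, which amounts to verifying that, for $p$ outside the $O_D(\log\|F\|)$ bad primes, the mod-$p$ double cover $s^2 = F(x, y)$ satisfies the geometric hypotheses of Deligne's theorem (irreducibility and bounded singularities) with constants depending only on $D$. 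The squarefreeness of $F$ together with the oddness of $D$ — which prevents $F$ from being a constant times the square of a form — secures both conditions.
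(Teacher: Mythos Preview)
Your overall strategy---square sieve, then completion of the incomplete sums to complete character sums over $(\Z/pq\Z)^2$---matches the paper's (the paper uses Gaussian weights and Poisson summation rather than the sharp box and Fourier completion, but this is cosmetic). The real difference lies in how the complete sum $\tilde S_p(c,d)=\sum_{a,b\bmod p}\bigl(\tfrac{F(a,b)}{p}\bigr)e_p(ca+db)$ is bounded. You appeal to Deligne; the paper instead exploits the homogeneity of $F$ and the oddness of $D$: replacing $(a,b)$ by $(ta,tb)$ and averaging over $t\in\mathbb{F}_p^\times$ converts the inner $t$-sum into a Gauss sum, leaving $\tfrac{\tau_p}{p-1}\sum_{a,b}\bigl(\tfrac{F(a,b)(ca+db)}{p}\bigr)$, which is then $\ll_D p^{3/2}$ by the one-variable Weil bound applied to the form $F(X,Y)(cX+dY)$. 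This lowers the required input from Weil~II to the classical Riemann hypothesis for curves, with transparent dependence on $D$, and is exactly where the hypothesis ``$D$ odd'' is used.

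Your Deligne route would also reach the goal, but the justification you give for $|\tilde S_p(c,d)|\ll_D p$ is not quite on point: geometric irreducibility and isolated singularities of the double cover $s^2=F(x,y)$ are the hypotheses governing the \emph{pure} multiplicative sum $\sum\bigl(\tfrac{F}{p}\bigr)$, not the mixed sum with the additive twist $e_p(ca+db)$; the latter requires a separate nondegeneracy check (e.g.\ via Katz's framework for mixed character sums on $\mathbb{A}^2$). The bound is correct, but you should either cite a precise result that covers it or---more simply---use the homogeneity trick above. Incidentally, since $D$ is odd one has $\tilde S_p(0,0)=0$ exactly (substitute $(a,b)\mapsto(ta,tb)$ with $t$ a non-residue), so your $p^{3/2}$ bound at zero frequency, and the resulting $UV/(pq)^{1/2}$ term in $T(p,q)$, are unnecessarily generous though harmless for the final optimisation.
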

\begin{proof}
Let $\Delta_F\not=0$ be the discriminant of $F$, and let $P\ge 2$
be a parameter to be decided later. Then if $F(x,y)$ is a non-zero square, and $p$ runs over primes, we have
\[\twosum{P<p\le 2P}{p\nmid \Delta_F}\left(\frac{F(x,y)}{p}\right)\ge \pi(2P)-\pi(P)-\omega(\Delta_F F(x,y)),\]
so that 
\[\twosum{P<p\le 2P}{p\nmid \Delta_F}\left(\frac{F(x,y)}{p}\right)\gg \frac{P}{\log P}\]
provided that $P\gg_D \log(2||F||UV)$, as we henceforth assume.  It follows that
\[\left(\frac{P}{\log P}\right)^2 \mu_0(U,V)\ll \sum_{(x,y)\in\Z^2}\exp\{-x^2/U^2-y^2/V^2\}
\left\{\twosum{P<p\le 2P}{p\nmid\Delta_F}\left(\frac{F(x,y)}{p}\right)\right\}^2.\]
If we expand the square on the right we will have to examine
\[S_0(q):=\sum_{(x,y)\in\Z^2}\exp\{-x^2/U^2-y^2/V^2\}\left(\frac{F(x,y)}{q}\right),\]
with $q=p_1p_2$.  When $p_1=p_2$ this is $O(UV)$, and we conclude that
\beql{res}
\mu_0(U,V)\ll UV(\log P)P^{-1}+(\log P)^2P^{-2}\twosum{P<p_1\not=p_2\le 2P}{p_1,p_2\nmid\Delta_F}|S_0(p_1p_2)|.
\eeq
To estimate $S_0(q)$ we use Poisson summation to show that
\begin{eqnarray}\label{feed}
S_0(q)&=&\sum_{(m,n)\in\Z^2}\left(\frac{F(m,n)}{q}\right)
\sum_{(r,s)\in\Z^2}\exp\{-(m+qr)^2/U^2-(n+qs)^2/V^2\}\nonumber\\
&=&\frac{\pi UV}{q^2}\sum_{(u,v)\in\Z^2}\exp\{-\pi^2u^2U^2q^{-2}-\pi^2v^2V^2q^{-2}\}\Sigma_0(u,v;q),
\end{eqnarray}
say, with
\[\Sigma_0(u,v;q)=\sum_{m,n\modd{q}}\left(\frac{F(m,n)}{q}\right)e_q(mu+nv).\]
The usual multiplicativity argument shows that
\beql{PP}
\Sigma_0(u,v;p_1p_2)=\left(\frac{p_2}{p_1}\right)^D\left(\frac{p_1}{p_2}\right)^D\Sigma_0(u,v;p_1)\Sigma_0(u,v;p_2)
\eeq
when $p_1\not=p_2$, so that it is enough to investigate $\Sigma_0(u,v;q)$ when $q$ is prime,
equal to $p$, say.

It looks at first glance as if we will need the theory of two-dimensional exponential sums, but fortunately we can
simplify things by the following argument.  If $t$ is an integer coprime to $p$ then $tm$ and $tn$ run over
all integers modulo $p$ when $m$ and $n$ do. We therefore have
\begin{eqnarray}\label{oe}
\Sigma_0(u,v;p)&=&\frac{1}{p-1}\sum_{t=1}^{p-1}\;\sum_{m,n\modd{p}}\left(\frac{F(tm,tn)}{p}\right)e_p(t\{mu+nv\})\nonumber\\
&\hspace{-2cm}=&\hspace{-1cm}\frac{1}{p-1}\sum_{m,n\modd{p}}\left(\frac{F(m,n)}{p}\right)\sum_{t=1}^{p-1}\left(\frac{t}{p}\right)^D e_p(t\{mu+nv\}).
\end{eqnarray}
Since $D$ is odd we get
\[\Sigma_0(u,v;p)=\frac{\tau_p}{p-1}\sum_{m,n\modd{p}}\left(\frac{F(m,n)}{p}\right)\left(\frac{mu+nv}{p}\right),\]
where $\tau_p=\sqrt{p}$ or $i\sqrt{p}$ is the Gauss sum. (Note that this remains true even when $p|mu+nv$.)
The form $F(X,Y)(Xu+Yv)$ cannot be a constant multiple of a square modulo $p$, unless $u$ and $v$ vanish 
modulo $p$, since $p\nmid\Delta_F$, and $D\not=1$. Thus 
\[\sum_{m,n\modd{p}}\left(\frac{F(m,n)}{p}\right)\left(\frac{mu+nv}{p}\right)\ll_D p^{3/2}\]
for all $u,v$, by Weil's Riemann hypothesis for curves over $\mathbb{F}_p$. We therefore have $\Sigma_0(u,v;p)\ll_D p$,
whence the product formula (\ref{PP}) yields $\Sigma_0(u,v;q)\ll_D q$ for $q=p_1p_2$. We can now feed 
this bound into (\ref{feed}) to deduce that
\[S_0(q)\ll_D \frac{UV}{q^2}\left(\frac{q}{U}+1\right)\left(\frac{q}{V}+1\right)q=q^{-1}(q+U)(q+V).\]
Then, since $q=p_1p_2$ will be of order $P^2$, we see that (\ref{res}) yields
\[\mu_0(U,V)\ll_D UV(\log P)P^{-1}+P^2+U+V\]
whenever $P\gg_D\log(2||F||UV)$. We therefore choose $P$ to be a suitable constant multiple of 
$(UV)^{1/3}\log(2||F||)$, and the result follows.
\end{proof}

As a companion to Lemma \ref{quintic} we have the following result for univariate polynomials .
\begin{lemma}\label{quintic2}
Let $f(X)\in\Z[X]$ be a polynomial of degree $D$, and suppose that $f$ is not a constant multiple of a square. 
Let $U\ge 1$ and let $\mu_1(U)$ be the number of integers 
$x\in[-U,U]$ for which $f(x)$ is a square. Then 
\[\mu_1(U)\ll_D U^{1/2}.\]
\end{lemma}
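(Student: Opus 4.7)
My plan is to split by the degree $D$ and handle small and large degrees differently.

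For $D=1$, writing $f(X)=aX+b$ with $a\ne 0$, each solution with $f(x)=y^2$ determines $x=(y^2-b)/a$ uniquely from $y$. Requiring $x\in\Z$ forces $y^2\equiv b\pmod{|a|}$, which pins $y$ to at most $\tau(|a|)$ residue classes modulo $|a|$; combined with $|y|\ll(|a|U+|b|)^{1/2}$ and the elementary bound $\tau(|a|)\le 2|a|^{1/2}$, this yields $\mu_1(U)\ll U^{1/2}$, uniformly in $a,b$.

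For $D=2$ the hypothesis forces the discriminant $b^2-4ac$ of $f$ to be non-zero. Completing the square transforms $y^2=f(x)$ into a Pell-type conic in the variables $(2ax+b,y)$. Standard counts then give either $O(\log U)$ solutions (Pell case) or $O_\ep(|b^2-4ac|^\ep)$ solutions (elliptic case), both comfortably inside $O_D(U^{1/2})$.

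For $D\ge 3$, the plane curve $C:Y^2=f(X)$ is absolutely irreducible, thanks to the hypothesis on $f$. I would invoke the Bombieri--Pila determinant method, which bounds the number of integer points on $C$ in a box of side $T$ by $O_{D,\ep}(T^{1/D+\ep})$; substituting $T\ll_D||f||^{1/2}U^{D/2}$ gives $O(U^{1/2+\ep})$ up to a factor $||f||^\ep$. The main obstacle is to remove this $\ep$-loss and make the implied constant uniform in the coefficients of $f$. I expect this can be achieved by exploiting the special hyperelliptic structure: one compares $y$ with the polynomial part $g(x)$ of the Puiseux expansion of $\sqrt{f(x)}$ at infinity and uses the factorization $(y-g(x))(y+g(x))=f(x)-g(x)^2$ (whose right-hand side has degree strictly less than that of $f$) to force $y-g(x)$ into a range with only $O_D(U^{1/2})$ admissible values of $x$ in $[-U,U]$. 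This reduces the problem to an elementary gap/counting argument that is uniform in $f$.
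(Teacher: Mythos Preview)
The paper does not give an argument here at all; it simply quotes Theorem~1.1 of Vaughan \cite{vaughan}, which is exactly the uniform bound $\mu_1(U)\ll_D U^{1/2}$.

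Your attempt to supply a direct proof has a genuine gap in the range $D\ge 3$ with $D$ odd, and this is precisely the case that matters for the paper's applications (Lemma~\ref{q3} uses $f$ of odd degree $D\ge 3$, and Lemma~\ref{useSS} uses a cubic). The Runge-type step you propose needs a \emph{polynomial} $g(x)$ with $f(x)-g(x)^2$ of strictly smaller degree than $f$, coming from the principal part of $\sqrt{f(x)}$ at infinity. Such a $g$ exists only when $D=2m$ is even, since then $\sqrt{f(x)}=\sqrt{a_D}\,x^m+\cdots$ is a Laurent series in $x^{-1}$; when $D$ is odd the Puiseux expansion at infinity genuinely lives in half-integer powers of $x$, there is no polynomial $g$, and the factorisation $(y-g)(y+g)=f-g^2$ is unavailable. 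So the ``elementary gap/counting argument'' you allude to does not get off the ground in the odd-degree case.

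There are also smaller issues in low degree. For $D=1$ the claim that $y^2\equiv b\pmod{|a|}$ has at most $\tau(|a|)$ solutions is false (take $b=0$, $a=p^{2}$: there are $p$ solutions), so the arithmetic needs to be redone to get a bound uniform in $a,b$. For $D=2$ your ``elliptic case'' bound $O_\ep(|b^2-4ac|^\ep)$ is not uniform in $f$, and so does not by itself give $\ll_D U^{1/2}$; one has to combine it with some additional argument limiting the solutions with $|x|\le U$.
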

This is an immediate consequence of Theorem 1.1 of Vaughan \cite{vaughan}.
\bigskip

From Lemmas \ref{quintic} and \ref{quintic2} we have the following corollary.
\begin{lemma}\label{q3}
Let $F(X,Y)\in\Z[X,Y]$ be a non-zero form of odd degree $D\ge 3$, and suppose that  
$F$ has no repeated factor. 
Let $U,V\ge 1$ and let $\mu(U,V)$ be the number of pairs of integers 
$x\in[-U,U]$ and  $y\in[-V,V]$ for which $F(x,y)$ is a non-zero square. Then 
\[\mu(U,V)\ll_D (UV)^{2/3}\log^2(2||F||),\]
where $||F||$ is the maximum modulus of the coefficients of $F$.
\end{lemma}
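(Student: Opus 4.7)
The strategy is to combine Lemma~\ref{quintic} with a row-by-row application of Lemma~\ref{quintic2}, each handling a different regime of the aspect ratio $V/U$. By symmetry I may assume $U\le V$, and I set $L=\log(2\|F\|)$.

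The first step produces the row bound $\mu(U,V)\ll_D UV^{1/2}$. For each integer $x\in[-U,U]$ let $f_x(Y):=F(x,Y)\in\Z[Y]$. I claim that whenever $f_x\not\equiv 0$, it is not a constant multiple of a polynomial square. Granted this, Lemma~\ref{quintic2} applied to $f_x$ gives $\#\{y\in[-V,V]:F(x,y)\text{ is a square}\}\ll_D V^{1/2}$ for all such $x$, while the (at most one) integer $x$ with $f_x\equiv 0$ contributes nothing; summing over the $\ll U$ values of $x$ yields $\mu\ll_D UV^{1/2}$.

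To verify the claim, split on whether $X\mid F$. Writing $F(X,Y)=\sum_{j=0}^{D}\gamma_j X^{D-j}Y^j$, one has $F(0,Y)=\gamma_D Y^D$, so $X\nmid F$ iff $\gamma_D\ne 0$. If $X\nmid F$ then $\deg_Y f_x=D$ for every $x$; being of odd degree, $f_x$ cannot be a constant times a square. If $X\mid F$, squarefreeness forces $F=XF_1$ with $F_1$ squarefree of degree $D-1$ and $X\nmid F_1$. Then $f_0\equiv 0$, and for $x\ne 0$ I factor $F_1$ over $\overline{\Q}$ as $\prod_{i=1}^{D-1}(a_i X+b_i Y)$: squarefreeness makes the projective points $(a_i:b_i)$ pairwise distinct, while $X\nmid F_1$ forces all $b_i\ne 0$. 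Specialising at $X=x$ gives $F_1(x,Y)=\prod_i(a_i x+b_i Y)$, whose $D-1$ roots $-a_i x/b_i$ are pairwise distinct; hence $f_x=xF_1(x,Y)$ has only simple roots in $\overline{\Q}$ and is not a constant multiple of a polynomial square.

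It remains to combine the row bound with Lemma~\ref{quintic}. If $V\le U^2L^6$ then $V\le(UV)^{2/3}L^2$, so (using $U\le V$) the $U+V$ nuisance term in Lemma~\ref{quintic} is dominated by $(UV)^{2/3}L^2$, yielding $\mu\ll_D(UV)^{2/3}L^2$. If $V>U^2L^6$, a brief calculation gives $UV^{1/2}\le(UV)^{2/3}L^2$, so the row bound already suffices. These two regimes cover all $U\le V$, proving the lemma. The main obstacle is precisely the claim about $f_x$: without it, a hypothetical ``square row'' $F(x_0,Y)=(k h(Y))^2$ would contribute $\asymp V$ to $\mu$ and ruin the argument in the regime $V\gg U^2$, and the combination of odd degree and no-repeated-factor is exactly what rules such rows out.
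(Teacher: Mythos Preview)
Your proof is correct and follows essentially the same route as the paper: both combine the square-sieve bound of Lemma~\ref{quintic} with the univariate bound of Lemma~\ref{quintic2}, applied row-by-row (or column-by-column) after checking that the specialisations are never constant multiples of squares. The only cosmetic difference is in the final combination: the paper derives both bounds $\mu\ll U^{1/2}V$ and $\mu\ll UV^{1/2}$ and disposes of the nuisance term via the inequality $\min(U+V,\,U^{1/2}V,\,UV^{1/2})\le 2(UV)^{2/3}$, whereas you assume $U\le V$ by symmetry and split on the threshold $V\lessgtr U^2L^6$; both arguments achieve the same end. (A tiny point: in your second regime the inequality $UV^{1/2}\le (UV)^{2/3}L^2$ should really be $\ll$, since for $\|F\|=1$ one only gets $U^{1/3}V^{-1/6}<L^{-1}\le(\log 2)^{-1}$, but the bounded constant is harmless.)
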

\begin{proof}
We would like to apply Lemma \ref{quintic2} to the polynomial $f(X)=F(X,y)$ for a fixed
integer $y$. If $Y\nmid F(X,Y)$ then $f$ will have odd degree $D$, whence Lemma \ref{quintic2}
produces $O_D(U^{1/2})$ possible $x$
for which $F(x,y)$ is a non-zero square, for each choice of $y$. We therefore obtain a bound
\beql{aaa}
\mu(U,V)\ll U^{1/2}V.
\eeq
when $Y\nmid F(X,Y)$.

In the alternative case, in which $F(X,Y)=YG(X,Y)$ say, our hypotheses show that $Y$ cannot divide
$G(X,Y)$. Moreover $f(X)=yG(X,y)$ cannot be a constant multiple of a square if $y\not=0$, since $F$ 
has no repeated factor. Thus Lemma \ref{quintic2} shows that there are
$O_D(U^{1/2})$ possible $x$ for each non-zero $y$. Since $F(x,y)$ cannot be a 
non-zero square when $y=0$ we conclude that (\ref{aaa}) holds also when $Y|F(X,Y)$.

We may of course reverse the roles of $X$ and $Y$ to give a bound 
\[\mu(U,V)\ll V^{1/2}U.\]
Comparing these estimates with Lemma \ref{quintic} we now find that
\[\mu(U,V)\ll \left\{(UV)^{2/3}+\min\big(U+V\,,\,U^{1/2}V\,,\,V^{1/2}U\big)\right\}(\log 2||F||)^2.\]
Finally we observe that
\begin{eqnarray*}
\min\big(U+V\,,\,U^{1/2}V\,,\,V^{1/2}U\big)&\le& \min\big(U\,,\,U^{1/2}V\big)+\min\big(V\,,\,V^{1/2}U\big)\\
&\le &U^{1/3}\{U^{1/2}V\}^{2/3}+V^{1/3}\{V^{1/2}U\}^{2/3}\\
&=&2(UV)^{2/3},
\end{eqnarray*}
and the lemma follows.
\end{proof}

The forms $F$ of interest to us arise as $L_1^2M_1^3+L_2^2M_2^3$, where $L_1,L_2,M_1,M_2$ are
linear forms. It turns out that Lemma \ref{q3} will be applicable, 
except in certain exceptional circumstances.

\begin{lemma}\label{except}
Let $L_1(u,v),L_2(u,v),M_1(u,v),M_2(u,v)\in\Z[u,v]$ be non-zero linear forms, and suppose that no two
are proportional, apart possibly from $L_1$ and $M_1$, or $L_2$ and $M_2$. Let $F=L_1^2M_1^3+L_2^2M_2^3$
and define $\mu(X,Y)$ as in Lemma \ref{q3}. Then if $X,Y\ge 1$ we have
\[\mu(X,Y)\ll (XY)^{2/3}\log^2(2||F||XY),\]
except when there are non-zero rationals $g$ and $\nu$ such that
\[L_1(u,v)=\nu\{4M_1(u,v)-5g^2M_2(u,v)\}\]
and
\[L_2(u,v)=\nu g^3\{5M_1(u,v)-4g^2M_2(u,v)\}.\]
\end{lemma}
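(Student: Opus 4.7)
The plan is to reduce to Lemma~\ref{q3} by applying it to the squarefree radical $F_o$ of $F$, and to show that when this reduction is unavailable, $F$ must take the stated exceptional form.

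Write $F = H^2 F_o$ where $F_o$ is the squarefree part of $F$. Since $\deg F = 5$ is odd, $\deg F_o \in \{1, 3, 5\}$. If $\deg F_o \ge 3$, then $F_o$ is a form of odd degree $\ge 3$ with no repeated factor, so Lemma~\ref{q3} gives $\mu_{F_o}(X,Y) \ll (XY)^{2/3} \log^2(2\|F_o\|XY)$. Since $F(x,y)$ being a non-zero square forces $F_o(x,y)$ to be a non-zero square, and $\|F_o\|$ is polynomially bounded by $\|F\|$, the desired estimate for $\mu(X,Y)$ follows.

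It remains to treat $\deg F_o = 1$ and show that this forces the exceptional form. Since $M_1, M_2$ are non-proportional they form a basis of linear forms, so I write $L_1 = \alpha M_1 + \beta M_2$ and $L_2 = \gamma M_1 + \delta M_2$ with $\alpha, \delta \ne 0$ (else $L_1 \propto M_2$ or $L_2 \propto M_1$, both forbidden). Setting $t = M_1/M_2$ gives $F = M_2^5 p(t)$ with $p(t) = (\alpha t + \beta)^2 t^3 + (\gamma t + \delta)^2$. The condition $\deg F_o = 1$ is equivalent to the multiplicity pattern of $p$'s roots over $\overline{\mathbb{Q}}$ being one of $(5), (4,1), (3,2), (2,2,1)$. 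In the first three patterns, Galois-invariance of the multiplicity strata forces all roots to be rational, and direct coefficient comparison yields contradictions: $(5)$ gives the incompatible conditions $(\beta/\alpha)^2 = 10\tau^2$ and $\beta/\alpha = -5\tau/2$; $(4,1)$ gives $\sigma = \tau(4 \pm 2\sqrt{6})$, incompatible with $\sigma \in \mathbb{Q}$; and $(3,2)$ gives $\rho = -4\sigma$ together with $\sigma = -4\rho$, forcing $\rho = \sigma = 0$ and contradicting $\delta \ne 0$.

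For the surviving pattern $(2,2,1)$, I apply the substitution $t = -s^2$, giving $p(-s^2) = f_+(s) f_-(s)$ with $f_+(s) = -\alpha s^5 + \beta s^3 - \gamma s^2 + \delta$ and $f_-(s) = f_+(-s)$. A shared factor of $f_+$ and $f_-$ would force $A(\tau) = B(\tau) = 0$ at some $\tau$, equivalently $\alpha\delta - \beta\gamma = 0$, i.e.\ $L_1 \propto L_2$, forbidden; hence each double root $\tau_0$ of $p$ corresponds (for an appropriate sign of $s_0 = \sqrt{-\tau_0}$) to a double root $s_0$ of $f_+$. Eliminating between $f_+(s_0) = 0$ and $f_+'(s_0) = 0$ gives $\gamma = s_0(3\beta - 5\alpha s_0^2)/2$ and $\delta = s_0^3(\beta - 3\alpha s_0^2)/2$. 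Imposing these relations for both double roots $s_1 \ne s_2$ and eliminating $\beta$ yields, in terms of $e_1 = s_1+s_2$ and $e_2 = s_1 s_2$, the equation $4 e_1^4 - 17 e_1^2 e_2 + 4 e_2^2 = 0$. The root $e_1^2 = 4 e_2$ forces $s_1 = s_2$, so we are left with $4 e_1^2 = e_2$; the associated quadratic $s^2 - e_1 s + e_2$ then has discriminant $-15 e_1^2 < 0$, so $s_1, s_2$ must form a complex-conjugate pair. Writing $s_{1,2} = (g/4)(1 \pm i\sqrt{15})$ with $g \in \mathbb{Q}^*$, the repeated quadratic factor of $f_+$ over $\mathbb{Q}$ is $s^2 - (g/2)s + g^2$, so $f_+(s) = -\alpha(s^2 - (g/2)s + g^2)^2(s - s_\sigma)$ for some rational $s_\sigma$. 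The absence of an $s^4$-term in the original $f_+$ forces $s_\sigma = -g$, and matching the remaining coefficients gives $\beta = -5\alpha g^2/4$, $\gamma = 5\alpha g^3/4$, $\delta = -\alpha g^5$. Setting $\nu = \alpha/4$ yields exactly the exceptional form $L_1 = \nu(4M_1 - 5g^2 M_2)$ and $L_2 = \nu g^3(5M_1 - 4 g^2 M_2)$. The main obstacle throughout is the case analysis in pattern $(2,2,1)$, and the substitution $t = -s^2$ is the crucial technical device, converting the double-root condition on $p$ into a tractable statement about the repeated factors of the simpler polynomial $f_+$.
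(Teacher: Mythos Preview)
Your argument is correct. The reduction to the squarefree part $F_o$ and the application of Lemma~\ref{q3} when $\deg F_o\ge 3$ match the paper's treatment of the cases $F$ squarefree and $F=L^2C$. The difference lies in the handling of $\deg F_o=1$ (the paper's case $F=Q^2L$). The paper works directly with the factorisation $F=Q^2L$, writes $L_1,L_2,L$ in terms of $M_1,M_2$, locates a rational zero of $L$ to introduce the parameter $g$, and then carries out a fairly lengthy brute-force comparison of the coefficients of $Q^2$ against those of $F/L$, eventually solving a quadratic to isolate the exceptional configuration. Your route instead subdivides by the multiplicity pattern of $p(t)=(\alpha t+\beta)^2t^3+(\gamma t+\delta)^2$ and exploits the special shape of $p$ (no $t^2,t,t^0$ contribution from the first summand and no $t^5,t^4,t^3$ from the second) to dispatch the patterns $(5),(4,1),(3,2)$ quickly; the substitution $t=-s^2$ then factors $p(-s^2)=f_+(s)f_-(s)$ into coprime quintics with vanishing $s^4$ and $s$ coefficients, converting ``double root of $p$'' into ``double root of $f_+$'' and reducing the problem to an elementary symmetric-function computation. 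The paper's approach is more uniform (no case split on multiplicity patterns) but algebraically heavier; your substitution $t=-s^2$ is a neat structural observation that makes the decisive $(2,2,1)$ case transparent and yields the parameter $g$ directly from the repeated quadratic factor of $f_+$.
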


\begin{proof}
This follows immediately from Lemma \ref{q3} when $F$ is square-free. If $F$ factors over $\overline{\Q}$ as 
$L^2C$ with a linear form $L$ and a square-free cubic $C$ then there must be such a factorization over $\Z$. 
Then $\mu(X,Y)$ counts non-zero square values of $C(u,v)$ such that $L(u,v)\not=0$, 
while Lemma \ref{q3}, applied to $C(u,v)$, shows that there are
\[\ll (XY)^{2/3}\log^2(2||C||XY)\]
non-zero square values of $C$. Since $C$ is a factor of $F$ over $\Z$ it follows that $||C||\ll ||F||$, so that
Lemma \ref{except} follows in this case too.

It remains to consider the situation in which $F$ factors as $Q^2L$ over  $\overline{\Q}$. As before it follows that 
we may assume $Q$ and $L$ to be defined over $\Q$.  Our conditions ensure that $F$ does not vanish 
identically.  Moreover, since $M_1$ and $M_2$ are not proportional we can write $L_1,L_2$ and $L$ in terms of
them as 
\[L_1(u,v)=aM_1(u,v)+bM_2(u,v),\;\;\; L_2(u,v)=cM_1(u,v)+dM_2(u,v),\]
and
\[L(u,v)=eM_1(u,v)+fM_2(u,v)\]
with rational coefficients $a,\ldots,f$.  Clearly $a$ and $d$ cannot vanish since neither $L_1$ and $M_2$,
nor $L_2$ and $M_1$ are proportional.
If $L$ were proportional to $L_1$, say, then $L_1$ would divide $F$ and hence also $L_2^2M_2^3$. This would 
contradict our hypotheses that $L_1$ is coprime to both $L_2$ and $M_2$. A similar argument shows that $L$ 
cannot be proportional to any of $L_1, L_2, M_1$ or $M_2$, whence each of $af-be$, $cf-de$, $e$, and $f$ must 
be non-zero.  We now choose $u_0,v_0\in\Q$ such that $M_1(u_0,v_0)=f$ and $M_2(u_0,v_0)=-e$, which is
possible since $M_1$ and $M_2$ are not proportional. Then $L(u_0,v_0)=0$, whence
\[F(u_0,v_0)=(af-be)^2f^3-(cf-de)^2e^3=0.\]
We conclude that we can write $f=g^2e$ and 
\beql{rel1}
(af-be)g^3=cf-de
\eeq
for some non-zero $g\in\Q$. Thus 
$L(u,v)=e\{M_1(u,v)+g^2M_2(u,v)\}$ and 
\begin{eqnarray*}
e\{g^3L_1(u,v)-L_2(u,v)\}&=&e(g^3a-c)M_1(u,v)+e(g^3b-d)M_2(u,v)\\
&=&(g^3a-c)\{eM_1(u,v)+fM_2(u,v)\}\\
&=&(g^3a-c)L(u,v).
\end{eqnarray*}
We then calculate that
\begin{eqnarray*}
eF&=&eL_1^2(M_1^3+g^6M_2^3)-(eg^6L_1^2-eL_2^2)M_2^3\\
&=&L_1^2(eM_1+eg^2M_2)(M_1^2-g^2M_1M_2+g^4M_2^2)\\
&&\hspace{2cm}-(g^3L_1+L_2)(eg^3L_1-eL_2)M_2^3\\
&=&L_1^2L(M_1^2-g^2M_1M_2+g^4M_2^2)-(g^3L_1+L_2)(g^3a-c)LM_2^3,
\end{eqnarray*}
whence
\begin{eqnarray*}
eQ^2&=&(aM_1+bM_2)^2(M_1^2-g^2M_1M_2+g^4M_2^2)\\
&&\hspace{3cm}-(g^3a-c)\{g^3(aM_1+bM_2)+(cM_1+dM_2)\}M_2^3\\
&=&AM_1^4+BM_1^3M_2+CM_1^2M_2^2+DM_1M_2^3+EM_2^4,
\end{eqnarray*}
say, where
\[A=a^2,\;\;\; B=2ab-a^2g^2,\;\;\; C=b^2-2abg^2+a^2g^4,\]
\[D=2abg^4-b^2g^2-(g^3a-c)(g^3a+c)=c^2-g^2(b-ag^2)^2\]
and
\[E=b^2g^4-(g^3a-c)(g^3b+d).\]
To simplify things we write
\[a=\alpha,\; b=g^2\beta,\; c=g^3\gamma,\; d=g^5\delta\]
so that the relation (\ref{rel1}) reduces to $\alpha-\beta=\gamma-\delta$. If we 
write $\mu=\alpha-\beta=\gamma-\delta$ for this difference, our formulae 
then become $A=(\beta+\mu)^2$, $B=g^2(\beta^2-\mu^2)$, $C=g^4\mu^2$, 
$D=g^6(\gamma^2-\mu^2)$ and $E=g^8(\gamma-\mu)^2$, so that
\begin{eqnarray*}
eQ^2&=&(\beta+\mu)^2M_1^4+g^2(\beta^2-\mu^2)M_1^3M_2+g^4\mu^2M_1^2M_2^2\\
&&\hspace{3cm}\mbox{}+g^6(\gamma^2-\mu^2)M_1M_2^3+g^8(\gamma-\mu)^2M_2^4.
\end{eqnarray*}
Since $M_1$ and $M_2$ are linearly independent we may write $Q(u,v)$ as $Q_*(M_1,M_2)$
so that
\begin{eqnarray*}
eQ_*(X,Y)^2&=&(\beta+\mu)^2X^4+g^2(\beta^2-\mu^2)X^3Y+g^4\mu^2X^2Y^2\\
&&\hspace{3cm}\mbox{}+g^6(\gamma^2-\mu^2)XY^3+g^8(\gamma-\mu)^2Y^4.
\end{eqnarray*}
We then see that $e$ must be a square, $e=h^2$, say, and
\[\pm hQ(X,Y)=(\beta+\mu)X^2+\kappa g^2XY+\epsilon g^4(\gamma-\mu)Y^2\]
for some coefficient $\kappa$ and some choice $\epsilon=\pm 1$. Then
\[B=2\kappa g^2(\beta+\mu)=g^2(\beta^2-\mu^2),\]
whence $\kappa=\tfrac12 (\beta-\mu)$. Note here that $\beta+\mu=\alpha=a\not=0$, as
already noted. Similarly we have 
\[D=2\epsilon\kappa g^6(\gamma-\mu)=g^6(\gamma^2-\mu^2),\]
whence $\kappa=\epsilon\tfrac12(\gamma+\mu)$. Thus $\beta=\gamma+2\mu$ for 
$\epsilon=+1$, and $\beta=-\gamma$ when $\epsilon=-1$. Moreover
\[C=g^4\mu^2=g^4\kappa^2+2\ep g^4(\beta+\mu)(\gamma-\mu),\]
so that 
\[\mu^2=\tfrac14(\gamma+\mu)^2+2(\gamma+3\mu)(\gamma-\mu)\]
when $\epsilon=+1$. However this reduces to $\gamma^2+2\gamma\mu-3\mu^2=0$.
Thus if $\epsilon=+1$ we either have $\gamma=\mu$ (giving a contradiction, since 
$\delta$ is non-zero), or $\gamma=-3\mu$ (whence $\beta=-\mu$ and $\alpha=0$,
giving another contradiction). This leaves us with the case $\epsilon=-1$, in which
\[C=g^4\mu^2=g^4\{\tfrac14(\gamma+\mu)^2-2(-\gamma+\mu)(\gamma-\mu)\}.\]
This time we conclude that $9\gamma^2-14\gamma\mu+5\mu^2=0$, so that either
$\gamma=\mu$ or $\gamma=\tfrac59\mu$. The former possibility leads to $\delta=0$,
which is impossible, leaving us with the sole example in which 
\[(\alpha,\beta,\gamma,\delta)=\frac{\mu}{9}(4,-5,5,-4).\]
This produces the exceptional case in the lemma.
\end{proof}
The reader may check that in the exceptional case one has
\[F(u,v)=\nu^2\left(4M_1^2-7g^2M_1M_2+4g^4M_2^2\right)^2\left(M_1+g^2M_2\right),\]
so that $F$ does indeed factor as $Q^2L$.
\bigskip

Our next result describes solutions of quadratic congruences.
\begin{lemma}\label{Omega}
Let $Q(u,v)\in\Z[u,v]$ be a quadratic form, and let $r\in\N$. Then there are 
lattices $\sL_1,\ldots,\sL_N\subseteq\Z^2$ with $N\le 2^{\Omega(r)}$, such that 
$Q(u,v)\equiv 0\modd{r}$ if and only if $(u,v)\in\cup_{n=1}^N\, \sL_n$.
\end{lemma}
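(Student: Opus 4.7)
The plan is to reduce to prime-power moduli via the Chinese Remainder Theorem, then induct on the exponent. Write $r=\prod_{i=1}^{s}p_{i}^{k_{i}}$, so that $\Omega(r)=\sum_{i}k_{i}$. If for each $i$ the set $\{(u,v)\in\Z^{2}:Q(u,v)\equiv 0\modd{p_{i}^{k_{i}}}\}$ is a union of at most $2^{k_{i}}$ sublattices of $\Z^{2}$ (each containing $p_{i}^{k_{i}}\Z^{2}$), then intersecting over $i$ and distributing $\cap$ over $\cup$ presents the full solution set modulo $r$ as a union of at most $\prod_{i}2^{k_{i}}=2^{\Omega(r)}$ lattices, since any intersection of finitely many sublattices of $\Z^{2}$ is again a sublattice.

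For a single prime power $p^{k}$ I would induct on $k$, with the case $k=0$ trivial. For $k\ge 1$, first examine $Q$ modulo $p$. If $Q\equiv 0\modd{p}$ identically, write $Q=pQ'$ so that the condition $Q(u,v)\equiv 0\modd{p^{k}}$ becomes $Q'(u,v)\equiv 0\modd{p^{k-1}}$, handled by induction with $\le 2^{k-1}\le 2^{k}$ lattices. Otherwise the zero locus of $Q$ in $\mathbb{F}_{p}^{2}$ consists of at most two lines through the origin, according to whether $Q\modd{p}$ factors as a product of two non-proportional linear forms over $\mathbb{F}_{p}$ (two lines), a repeated linear form (one line), or as an anisotropic (irreducible) quadratic (only the origin, contributing the single sublattice $p\Z^{2}$). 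In every case we obtain at most two sublattices $\sL_{1},\sL_{2}\supseteq p\Z^{2}$ whose union is the mod $p$ solution set.

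For each such $\sL_{j}$, choose a $\Z$-basis $e_{1},e_{2}$ of $\sL_{j}$, and substitute $(u,v)=u'e_{1}+v'e_{2}$. The form $Q_{j}(u',v'):=Q(u'e_{1}+v'e_{2})$ has all coefficients divisible by $p$, so $Q_{j}=pR_{j}$ for a quadratic $R_{j}\in\Z[u',v']$, and the original congruence on $\sL_{j}$ is equivalent to $R_{j}(u',v')\equiv 0\modd{p^{k-1}}$. By the inductive hypothesis this gives at most $2^{k-1}$ sublattices in the $(u',v')$-plane; pulling back through the basis change yields at most $2^{k-1}$ sublattices of $\sL_{j}\subseteq\Z^{2}$. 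Summing over the (at most two) choices of $j$ produces the required $2\cdot 2^{k-1}=2^{k}$ lattices.

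The main thing to handle is the case analysis for $Q$ modulo $p$ together with attentive bookkeeping in the induction, so that no case exceeds $2^{k}$. In the anisotropic case one actually has the single sublattice $\sL_{1}=p\Z^{2}$ with $Q_{1}=p^{2}R_{1}$, so one descends two levels in one step; this only improves the bound, and so poses no genuine obstacle. Nothing in this argument is deep; the content is purely that the lattice of mod $p$ solutions can be peeled off one layer at a time, and each layer contributes a factor of at most $2$.
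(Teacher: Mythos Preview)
Your proof is correct and follows essentially the same approach as the paper: peel off one prime factor at a time, using that the zero set of a quadratic modulo $p$ is a union of at most two lines. The paper inducts directly on $\Omega(r)$ (passing from $r$ to $pr$) rather than first invoking CRT and then inducting on the exponent of each prime power, but this is only a cosmetic reorganization of the same argument.
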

\begin{proof}
We argue by induction on $\Omega(r)$, the case $r=1$ being
trivial. When $r=p$ is prime 
the required lattices are $\Z^2$ if $Q$ vanishes modulo $p$, or
$p\Z^2$ if $Q$ is 
irreducible modulo $p$, or $\{(u,v):p|L(u,v)\}$ and $\{(u,v):p|L'(u,v)\}$ when 
$Q$ factors as $LL'$ modulo $p$. This suffices for the case $\Omega(r)=1$.

Now suppose the result holds 
for $r$ and consider divisibility by $pr$ for some prime $p$. Suppose
that the lattices  
corresponding to $r$ are $<\mathbf{g}_n,\mathbf{h}_n>$ for $n\le N\le
2^{\Omega(r)}$,  
so that $r|Q(u,v)$ if and only if $(u,v)=x\mathbf{g}_n+y\mathbf{h}_n$
for some $x,y\in\Z$ and  
some $n\le N$. Then $Q(X\mathbf{g}_n+Y\mathbf{h}_n)$ must be
identically divisible by $r$, 
and so equal to $rQ_n(X,Y)$, say. Thus $pr|Q(u,v)$ if and only if
there is an $n$ such that 
$(u,v)=x\mathbf{g}_n+y\mathbf{h}_n$ with $p|Q_n(x,y)$. As we have
seen, this last condition  
is equivalent to at most two lattice constraints, producing at most
$2N\le 2^{\Omega(pr)}$  
lattices corresponding to the congruence $Q(u,v)\equiv
0\modd{pr}$. This completes 
the induction step, and so suffices for the lemma.
\end{proof}

We will also use the following simple result on primitive integer
points in 2-dimensional lattices, 
see the author \cite[Lemma 2]{hbDASF}, for  example.
\begin{lemma}\label{DASF}
Let $\sL\subseteq\Z^2$ be a 2-dimensional lattice of determinant
$\det(\sL)$, and let $E$ be an 
ellipse, centred on the origin, of area $A$. Then
\[\card\{(x,y)\in\sL\cap A: \,\gcd(x,y)=1\}\ll \frac{A}{\det(\sL)}+1.\]
\end{lemma}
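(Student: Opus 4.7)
The plan is to exploit two key facts: (i) any two non-proportional vectors in $\sL$ have determinant which is a non-zero integer multiple of $\det(\sL)$, and (ii) two distinct primitive points of $\Z^2$ cannot lie on the same ray from the origin, since if one is a positive scalar multiple of the other the scalar must be $1$ (by primitivity of the longer one). Note that ``$\sL\cap A$'' in the statement is a typographical error for $\sL\cap E$.

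First I would list the primitive points of $\sL\cap E$ as $\mathbf{v}_1,\ldots,\mathbf{v}_k$ and order them by argument around the origin. By fact (ii) no two lie on a common ray, so the angular ordering is strict.

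Next, for $i=1,\ldots,k-1$ I would form the triangle $T_i$ with vertices $\mathbf{0},\mathbf{v}_i,\mathbf{v}_{i+1}$. Its area equals $\tfrac12|\det(\mathbf{v}_i,\mathbf{v}_{i+1})|$, and by fact (i) this is at least $\tfrac12\det(\sL)$ since the two vectors are non-proportional elements of $\sL$. Consecutive triangles occupy disjoint angular sectors, so they have pairwise disjoint interiors; and because $E$ is a convex set containing the origin and each $\mathbf{v}_i$, we have $T_i\subseteq E$ for every $i$. Hence
\[
\tfrac12(k-1)\det(\sL)\;\le\;\sum_{i=1}^{k-1}\mathrm{area}(T_i)\;\le\;A,
\]
which rearranges to $k\le 2A/\det(\sL)+1$, as required.

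I do not expect a genuine obstacle here; the argument is elementary once one observes that primitivity forces distinct rays and that the triangles fit disjointly inside the convex set $E$. The only care needed is in handling the degenerate cases $k\le 1$, where the bound is trivial, and in noting that the constant hidden in $\ll$ is absolute (in fact $2$ suffices).
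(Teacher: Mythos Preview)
The paper does not actually give a proof of this lemma; it merely cites \cite[Lemma~2]{hbDASF}.  Your argument is the classical elementary proof and the strategy is correct, but there is a genuine gap in one step.

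You assert that ``consecutive triangles occupy disjoint angular sectors''.  This is only true when the angular gap $\theta_{i+1}-\theta_i$ between consecutive points is strictly less than~$\pi$.  If some gap exceeds~$\pi$ the triangle $T_i=\mathrm{conv}\{0,\mathbf{v}_i,\mathbf{v}_{i+1}\}$ lies not in the forward sector from $\mathbf{v}_i$ to $\mathbf{v}_{i+1}$ but in the complementary (short) sector, and can then overlap earlier triangles.  A related issue is that distinct \emph{rays} do not guarantee non-proportionality: $\mathbf{v}$ and $-\mathbf{v}$ lie on different rays but span a degenerate triangle of area~$0$, so fact~(i) cannot be invoked for such a pair.

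Both points are repaired by one observation you did not use: since $E$ is centrally symmetric, the primitive points of $\sL\cap E$ come in antipodal pairs $\pm\mathbf{v}$, so the set of arguments is invariant under $\theta\mapsto\theta+\pi$.  Hence for $k\ge 4$ no consecutive gap can reach~$\pi$ (a gap of length $\ge\pi$ starting at $\theta_i$ would contain $\theta_i+\pi$, which is itself one of the arguments).  With all gaps $<\pi$ your triangle argument goes through verbatim, and indeed one may include the wrap-around triangle to get $k\le 2A/\det(\sL)$ when $k\ge 4$.  The case $k\le 2$ is absorbed by the ``$+1$''.  After this amendment your proof is complete.
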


Our final auxiliary result is a version of a ``Siegel's Lemma''
sometimes thought of 
as a  ``projective slicing'' argument. 
For a $k$-dimensional lattice $\mathsf{M}\in\R^k$ we write
\[\widehat{\mathsf{M}}=\{\mathbf{u}\in\R^k:\mathbf{u}^T\mathbf{m}\in\Z,\;
\forall \mathbf{m}\in\mathsf{M}\}.\]
\begin{lemma}\label{siegel}
  Let $\mathsf{M}\in\R^k$ be a $k$-dimensional lattice, where $k\ge 2$.  
Then there is a set
  $H\subset\mathsf{M}$ with $\card H\ll_k
1+\det(\mathsf{M})^{1/(k-1)}$
such that 
\[||\mathbf{m}||_\infty\ll_k 1+\det(\mathsf{M})^{1/(k-1)}\]
for all $\mathbf{m}\in H$,  and so that every
  $\mathbf{u}\in\widehat{\mathsf{M}}$ with $||\mathbf{u}||_\infty\le 1$ is
  orthogonal to some non-zero element of $H$.
\end{lemma}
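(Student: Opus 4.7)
The plan is to combine Minkowski's first theorem with a projective covering argument based on lattice duality. Set $D=\det(\mathsf{M})$ and $W=1+D^{1/(k-1)}$.

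First I reduce to primitive dual vectors: any non-zero $\mathbf{u}\in\widehat{\mathsf{M}}$ with $\|\mathbf{u}\|_\infty\le 1$ is a non-zero integer multiple of a primitive $\mathbf{u}_0\in\widehat{\mathsf{M}}$ with $\|\mathbf{u}_0\|_\infty\le 1$, and $\mathbf{m}\cdot\mathbf{u}=0$ iff $\mathbf{m}\cdot\mathbf{u}_0=0$. So it suffices to produce, for each primitive short $\mathbf{u}$, some non-zero $\mathbf{m}\in H$ orthogonal to it.

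For such a primitive $\mathbf{u}$, the map $\mathsf{M}\to\Z$, $\mathbf{m}\mapsto\mathbf{u}\cdot\mathbf{m}$, is surjective by primitivity, with kernel $\Lambda_\mathbf{u}=\mathsf{M}\cap\mathbf{u}^\perp$. Splitting $\mathsf{M}=\Lambda_\mathbf{u}\oplus\Z\mathbf{m}_0$ for an $\mathbf{m}_0\in\mathsf{M}$ with $\mathbf{u}\cdot\mathbf{m}_0=1$, whose perpendicular distance to $\mathbf{u}^\perp$ is $1/|\mathbf{u}|_2$, gives the covolume of $\Lambda_\mathbf{u}$ inside $\mathbf{u}^\perp$ as $D|\mathbf{u}|_2$. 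Since $\|\mathbf{u}\|_\infty\le 1$ forces $|\mathbf{u}|_2\le\sqrt{k}$, Minkowski's first theorem applied in dimension $k-1$ yields a non-zero $\mathbf{m}_\mathbf{u}\in\Lambda_\mathbf{u}$ with
\[\|\mathbf{m}_\mathbf{u}\|_\infty\le|\mathbf{m}_\mathbf{u}|_2\ll_k(D|\mathbf{u}|_2)^{1/(k-1)}\ll_k W,\]
so every $\mathbf{m}_\mathbf{u}$ automatically has the required sup-norm.

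The main obstacle is bounding the cardinality of $H$ by $\ll_k W$, because there can be as many as $\asymp D\gg W$ primitive short $\mathbf{u}$'s (in the regime where all successive minima of $\widehat{\mathsf{M}}$ are small). The key observation is that a single $\mathbf{m}\in\mathsf{M}$ is perpendicular to all primitive vectors in the rank-$(k-1)$ sublattice $\mathbf{m}^\perp\cap\widehat{\mathsf{M}}$, which (by the same covolume calculation with the roles of $\mathsf{M}$ and $\widehat{\mathsf{M}}$ reversed) has covolume $|\mathbf{m}|_2/D$ inside $\mathbf{m}^\perp$; a count of primitive lattice points in a unit box via a higher-dimensional analogue of Lemma~\ref{DASF} then shows that each $\mathbf{m}$ of norm $\asymp W$ covers $\asymp D/|\mathbf{m}|_2\asymp D/W=W^{k-2}$ of the primitive short $\mathbf{u}$'s. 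I would then construct $H$ either by a greedy procedure (selecting at each step an $\mathbf{m}$ covering the maximum number of as-yet uncovered $\mathbf{u}$'s) or, more cleanly, by a direct choice guided by a Minkowski-reduced basis of $\mathsf{M}$, yielding $|H|\ll_k D/W^{k-2}=W$ as required.

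The hardest step is verifying the covering bound uniformly over all shapes of $\mathsf{M}$, especially when the successive minima are highly unbalanced. A case split on the profile of the successive minima handles this: when some $\lambda_i\gg W$ the primitive short $\mathbf{u}$'s concentrate on a proper rational subspace of $\R^k$ (the span of the dual-basis vectors with small $\mu_j$), reducing the problem by induction on $k$ to a lower-rank sublattice, where the analogous $H$ has size controlled by a smaller determinant.
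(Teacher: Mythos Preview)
Your reduction to primitive $\mathbf{u}$ and the Minkowski step producing a short $\mathbf{m}_{\mathbf{u}}\perp\mathbf{u}$ are fine, but the crucial cardinality bound $\#H\ll_k W$ is not actually proved. You correctly isolate this as the obstacle, but neither of your two suggested fixes works as stated. A greedy set-cover argument only yields $\#H\ll_k W\log D$, since greedy covering carries a logarithmic overhead; and the asserted ``each $\mathbf{m}$ of norm $\asymp W$ covers $\asymp D/W$ primitive short $\mathbf{u}$'s'' relies on a volume-over-covolume heuristic that is not uniform in the shape of the sublattice $\mathbf{m}^\perp\cap\widehat{\mathsf{M}}$ --- precisely the unbalanced-minima regime you flag. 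Your proposed induction on $k$ via a case split is only a sketch with no mechanism given for passing the determinant bound down to the sublattice, and the ``direct choice guided by a Minkowski-reduced basis'' is not described at all. So the argument, as written, has a genuine gap at the point you yourself identify as hardest.

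The paper sidesteps all of this with a much simpler pigeonhole trick: it takes $H$ to be \emph{all} of $\mathsf{M}\cap[-2t_0,2t_0]^k$ for a suitable $t_0$. For any $\mathbf{u}\in\widehat{\mathsf{M}}$ with $\|\mathbf{u}\|_\infty\le 1$, the integers $\mathbf{u}^T\mathbf{m}$ with $\mathbf{m}\in\mathsf{M}\cap[-t_0,t_0]^k$ lie in $[-kt_0,kt_0]$, so there are at most $1+2kt_0$ possible values; once the box contains more lattice points than this, two collide and their difference lies in $H$ and is orthogonal to $\mathbf{u}$. The size of $t_0$ and of $\#H=\mathcal{N}(2t_0)$ are then controlled simultaneously by a standard two-sided lattice-point count in terms of the successive minima, giving $t_0\ll_k\det(\mathsf{M})^{1/(k-1)}$ and $\#H\ll_k 1+t_0$. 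No covering, no induction, no per-$\mathbf{u}$ construction.
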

When $\mathsf{M}=\lambda\Z^k$ for some $\lambda\ge 1$ the proof is
straightforward, but for lattices 
of general shapes more work is required.
\begin{proof}
We begin by dealing with the easy case in which $\det(\mathsf{M})<k^{-k}$.
By Minkowski's Theorem there is a shortest non-zero vector
$\mathbf{m}_1\in\mathsf{M}$ with
\[||\mathsf{m}_1\||_\infty\le \det(\mathsf{M})^{1/k}<k^{-1}.\]
Then if $\mathbf{u}\in\widehat{\mathsf{M}}$ with
$||\mathbf{u}||_\infty\le 1$ we have 
$\mathsf{u}^T\mathsf{m}_1\in\Z$ and $|\mathsf{u}^T\mathsf{m}_1|<1$, so that
$\mathsf{u}^T\mathsf{m}_1=0$. We may therefore take
$H=\{\mathbf{m}_1\}$ in this case. 

For the rest of the proof we may assume that $\det(\mathsf{M})\ge k^{-k}$.
Let $\mathcal{C}=[-1,1]^k$, and write
$\mathcal{N}(t)=\card\left(t\CC\cap\mathsf{M}\right)$. 
We then claim that every
  $\mathbf{u}\in\widehat{\mathsf{M}}$ with $||\mathbf{u}||_\infty\le 1$ is
  orthogonal to some non-zero element of $H=2t\CC\cap\mathsf{M}$, provided that
we have $\mathcal{N}(t)>1+2kt$. To prove this we note that each vector $\mathbf{m}$ in 
$t\CC\cap\mathsf{M}$ has 
$\mathbf{u}^T\mathbf{m}\in\Z$, and $|\mathbf{u}^T\mathbf{m}|\le kt$, 
so that there are at most $1+2kt$ possible values for
$\mathbf{u}^T\mathbf{m}$. Thus if $\mathcal{N}(t)>1+2kt$ there will be distinct
vectors $\mathbf{m}_1$ and $\mathbf{m}_2$ with
$\mathbf{u}^T\mathbf{m}_1=\mathbf{u}^T\mathbf{m}_2$. It then follows that
$\mathbf{u}^T(\mathbf{m}_1-\mathbf{m}_2)=0$, with
$0<||\mathbf{m}_1-\mathbf{m}_2||_\infty\le 2t$.  Since
$\mathbf{m}_1-\mathbf{m}_2\in 2t\CC\cap\mathsf{M}$ the claim follows.

To investigate the size of $\mathcal{N}(t)$ we use Lemma 1, 
part (iii), of the author's work \cite{annals}. This shows that there is a basis 
$\mathbf{b}^{(1)},\ldots,\mathbf{b}^{(k)}$ of $\mathsf{M}$ such that
\beql{ins}
 c_1\prod_{i=1}^k(1+t/||\mathbf{b}^{(i)}||_\infty)\le \mathcal{N}(t)\le
c_2\prod_{i=1}^k(1+t/||\mathbf{b}^{(i)}||_\infty)
\eeq
and
\beql{db}
\prod_{i=1}^k||\mathbf{b}^{(i)}||_\infty \le c_3\det(\mathsf{M}),
\eeq
with positive constants $c_1,c_2,c_3$ depending only on $k$.
Since $k\ge 2$ we may define $t_0$ as the least positive number for which
\beql{ins1}
c_1\prod_{i=1}^k(1+t_0/||\mathbf{b}^{(i)}||_\infty)=1+c_1+2kt_0.
\eeq
Then $\mathcal{N}(t_0)>1+2kt_0$ by (\ref{ins}), so that we may take
$H=2t_0\CC\cap\mathsf{M}$ in the above. It follows from (\ref{db}) and 
(\ref{ins1}) that
\[\frac{t_0^k}{\det(\mathsf{M})}\ll_k
\frac{t_0^k}{\prod_{i=1}^k||\mathbf{b}^{(i)}||_\infty}\ll_k 1+t_0,\]
whence
\[t_0\ll_k \det(\mathsf{M})^{1/k}+\det(\mathsf{M})^{1/(k-1)}\ll_k \det(\mathsf{M})^{1/(k-1)},\]
since we are assuming that $\det(\mathsf{M})\ge k^{-k}$. 
Thus to conclude the proof of the lemma it only remains to observe that
\[\card H=\mathcal{N}(2t_0)\ll_k \prod_{i=1}^k(1+2t_0/||\mathbf{b}^{(i)}||_\infty)
\ll_k 1+t_0\]
by (\ref{ins}) and (\ref{ins1}), and that every $\mathbf{m}\in H$ has 
$||\mathbf{m}||_\infty\le 2t_0$. 
\end{proof}

\section{Proof of Theorem \ref{thm2} --- Easier Bounds}
For the proof of Theorem \ref{thm2} we begin by splitting the variables $x_i,y_i$ into dyadic ranges
\beql{dyad}
\tfrac12 X_i<x_i\le X_i\;\;\;\mbox{and}\;\;\; \tfrac12 Y_i<y_i\le Y_i\;\;(i=1,2,3),
\eeq
with the $X_i$ and $Y_i$ running over powers of 2 up to $2B$ at most. In view of (\ref{range}) we may assume that
\beql{range1}
X_i^2Y_i^3\le 32B/|a_i|\;\;\;(i=1,2,3).
\eeq
Thus there is a choice of 
$X_1,X_2,X_3,Y_1,Y_2,Y_3$ such that
\beql{0}
n(B;a_1,a_2,a_3)\ll (\log B)^6\card S_0(B;X_1,X_2,X_3,Y_1,Y_2,Y_3),
\eeq
where $S_0(B;X_1,X_2,X_3,Y_1,Y_2,Y_3)$ is the set of points $(x_1,x_2,x_3,y_1,y_2,y_3)$
lying in the ranges (\ref{dyad}), satisfying (\ref{axy}) and (\ref{gcd}), and
with $a_1y_1,a_2y_2$ and $a_3y_3$ square-free. The parameters $X_i,Y_i$ will be fixed throughout
our argument, so we will write $S_0(B)$ for brevity in place of
$S_0(B;X_1,X_2,X_3,Y_1,Y_2,Y_3)$, and define $N_0(B)=\card S_0(B)$.
The aim of this section is to give some relatively easy bounds for $N_0(B)$.

Our first result is the following.
\begin{lemma}\label{YYY}
For any fixed $\ep>0$ we have
\beql{XXXf}
N_0(B)\ll_\ep \{Y_1Y_2Y_3+(X_1X_2X_2)^{1/3}\}B^\ep \ll \{Y_1Y_2Y_3+B^{1/2}\}B^\ep.
\eeq
Indeed, either $N_0(B)\ll_\ep B^{1/2+\ep}$ or each triple $(y_1,y_2,y_3)$ contributes 
$O_\ep(B^\ep)$ to $N_0(B)$.
\end{lemma}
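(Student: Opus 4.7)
The conditions in (\ref{gcd}) force the primitivities $\gcd(x_1,x_2,x_3)=\gcd(y_1,y_2,y_3)=1$: any prime dividing all three $x_i$ (or all three $y_i$) would also divide $\gcd(x_1y_1,x_2y_2,x_3y_3)=1$. The strategy is to fix $(y_1,y_2,y_3)$ in its dyadic box, count the resulting primitive solutions $(x_1,x_2,x_3)$ on an associated conic, and sum over the $\ll Y_1Y_2Y_3$ possible triples.

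For fixed $(y_1,y_2,y_3)$, the triple $(x_1,x_2,x_3)$ is a primitive integer zero of the ternary conic
\[Q_{\mathbf{y}}(X,Y,Z):=a_1y_1^3X^2+a_2y_2^3Y^2+a_3y_3^3Z^2=0,\]
subject to $|X|\le X_1$, $|Y|\le X_2$, $|Z|\le X_3$. Its discriminant has absolute value $\asymp (Y_1Y_2Y_3)^3$. The heart of the argument is the determinant-type estimate
\[\#\{\text{primitive }(X,Y,Z)\text{ on }Q_{\mathbf{y}}=0\text{ in the box}\}\ll_\ep B^\ep\left(1+\frac{(X_1X_2X_3)^{1/3}}{Y_1Y_2Y_3}\right).\]
Assuming at least one primitive solution, Holzer's theorem supplies a small base point, and every other primitive solution arises from $(u,v)\in\Z^2$ via binary quadratic forms $F_1,F_2,F_3$ obeying $\sum_i a_iy_i^3F_i^2\equiv 0$ identically. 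A routine size computation gives $\|F_i\|^2\asymp Y_1Y_2Y_3/Y_i^3$, so the joint constraints $|F_i(u,v)|\le X_i$ confine $(u,v)$ to a region whose $\Z^2$-content is controlled by the geometric mean; Lemma \ref{DASF}, applied to the auxiliary lattice encoding the residue conditions on $(u,v)$ that ensure primitivity of the image, then yields the stated count.

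Summing the per-conic estimate over the dyadic range of $(y_1,y_2,y_3)$ produces
\[N_0(B)\ll_\ep\bigl\{Y_1Y_2Y_3+(X_1X_2X_3)^{1/3}\bigr\}B^\ep,\]
and the weaker form with $B^{1/2}$ follows from $(X_1X_2X_3)^{1/3}\le B^{1/2}$, a direct consequence of $X_i^2Y_i^3\le B$ with $Y_i\ge 1$. For the concluding dichotomy: if every triple $(y_1,y_2,y_3)$ in the box contributes $O_\ep(B^\ep)$ solutions, then summing gives $N_0(B)\le Y_1Y_2Y_3\, B^\ep$ immediately; otherwise the alternative in the per-conic estimate must be active, forcing $(X_1X_2X_3)^{1/3}\gg Y_1Y_2Y_3\, B^\ep$, whereupon the same summed bound collapses to $N_0(B)\ll (X_1X_2X_3)^{1/3}B^\ep\le B^{1/2+\ep}$. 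The main obstacle is establishing the per-conic count with the correct dependence on $\det Q_{\mathbf{y}}$; once the sizes of the parametrizing forms $F_i$ are under control, the lattice-point count is a routine invocation of Lemma \ref{DASF}.
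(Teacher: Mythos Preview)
Your strategy---fix $(y_1,y_2,y_3)$, bound the primitive $(x_1,x_2,x_3)$ on the resulting diagonal conic, then sum and read off the dichotomy---is exactly the paper's, and your treatment of the final dichotomy matches as well. The only difference lies in how you obtain the per-conic bound
\[\ll_\ep\Bigl(1+\frac{(X_1X_2X_3)^{1/3}}{Y_1Y_2Y_3}\Bigr)B^\ep.\]
The paper simply invokes Corollary~2 of Browning--Heath-Brown \cite{BHB} for integral zeros of a ternary quadratic form, noting that for $\mathrm{diag}(a_1y_1^3,a_2y_2^3,a_3y_3^3)$ one has $\Delta\gg(y_1y_2y_3)^3$ and $\Delta_0=1$ (the latter because the equation (\ref{axy}) together with (\ref{gcd}) forces the $y_i$ to be pairwise coprime). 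Your Holzer-plus-parametrization route is a legitimate alternative and is in spirit how such bounds are proved, but the sketch leaves the quantitative heart unjustified: passing from $\|F_i\|^2\asymp Y_1Y_2Y_3/Y_i^3$ and $|F_i(u,v)|\le X_i$ to a region of area $\asymp(X_1X_2X_3)^{1/3}/(Y_1Y_2Y_3)$ is not a ``geometric mean'' argument, and Lemma~\ref{DASF} only applies once you have produced an honest ellipse (only the $F_i$ attached to the coefficient of isolated sign is definite, and it is that single constraint which does the work). The level of the parametrization also needs to be controlled to recover the $B^\ep$. These gaps are fillable, but filling them amounts to reproving the cited result; quoting \cite{BHB} is cleaner and is what the paper does.
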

\begin{proof}
The proof depends on Corollary 2 of Browning and Heath-Brown \cite{BHB}. This applies to a general
integral ternary quadratic form $Q$ of matrix $\mathbf{M}$. One writes $\Delta=|$det$(\mathbf{M})|$, and takes 
$\Delta_0$ to be highest common factor of the $2\times 2$ minors of $\mathbf{M}$. The result is then 
that the number of integral solutions $Q(x_1,x_2,x_3)=0$ with $|x_i|\le X_i$ for $i=1,2,3$ is
\[\ll \left\{1+\left(\frac{X_1X_2X_3\Delta_0^{3/2}}{\Delta}\right)^{1/3}\right\}d(\Delta).\]

We apply this to the form with matrix diag($a_1y_1^3,a_2y_2^3,a_3y_3^3)$, which has 
$Y_1Y_2Y_3\ll \Delta\ll B^3$ and $\Delta_0=1$. Thus each triple $(y_1,y_2,y_3)$ contributes
\beql{ind}
\ll_\ep \left\{1+\frac{(X_1X_2X_3)^{1/3}}{Y_1Y_2Y_3}\right\}B^\ep
\eeq
to $N_0(B)$. The first estimate of (\ref{XXXe}) then follows on summing over $y_1,y_2$ and $y_3$.
We then note that $X_1,X_2,X_3\ll B^{1/2}$ by (\ref{range1}), whence $(X_1X_2X_3)^{1/3}\ll B^{1/2}$.
This gives us the second bound of (\ref{XXXf}).

Finally, the estimate (\ref{ind}) is $O_\ep(B^\ep)$ unless $Y_1Y_2Y_3\le (X_1X_2X_3)^{1/3}$. In the 
latter case however we have $Y_1Y_2Y_3\ll B^{1/2}$, since (\ref{range1}) shows that $X_i\ll B^{1/2}$
for each index $i$. Thus either the bound (\ref{ind}) is $O_\ep(B^\ep)$ or (\ref{XXXf}) will yield
$N_0(B)\ll_\ep B^{1/2+\ep}$. This completes the proof of the lemma.
\end{proof}

We also have the following estimate.
\begin{lemma}\label{XXY}
If $\{i,j,k\}=\{1,2,3\}$ and $x_i, x_j$ and $y_k$ are given, the number of triples $(x_k,y_i,y_j)$ satisfying 
the conditions of Theorem \ref{thm2} is
\[\ll_\ep \left\{1+(Y_iY_j)^{2/3}Y_k^{-2}\right\} B^\ep\]
for any $\ep>0$.
\end{lemma}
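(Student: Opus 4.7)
The plan is to exploit the congruence that the main equation imposes modulo $y_k^3$. Taking $(i,j,k)=(1,2,3)$ for definiteness, I would reduce $a_1 x_1^2 y_1^3 + a_2 x_2^2 y_2^3 = -a_3 x_3^2 y_3^3$ modulo $y_3^3$ to obtain
\[y_1^3 \equiv \beta y_2^3 \modd{y_3^3}, \qquad \beta \equiv -\frac{a_2 x_2^2}{a_1 x_1^2}\modd{y_3^3},\]
noting that $\gcd(a_1 x_1, y_3) = 1$ follows from the coprimality hypotheses of Theorem \ref{thm2} together with the square-freeness of $y_1,y_2,y_3$ (any common prime of $x_1$ and $y_3$, or of $y_1$ and $y_3$, would force $p$ to divide $\gcd(x_1 y_1, x_2 y_2, x_3 y_3)$). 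Since $y_3$ is square-free and the cubing map on $(\Z/p^3)^*$ has at most three roots for each prime $p$, the Chinese Remainder Theorem reduces this cubic congruence to a linear one $y_1 \equiv \gamma y_2 \modd{y_3^3}$ for one of at most $3^{\omega(y_3)} \ll_\ep B^\ep$ residues $\gamma$.

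For each such $\gamma$, the pairs $(y_1, y_2)$ satisfying the linear congruence form a 2-dimensional sublattice $\sL \subset \Z^2$ of determinant $y_3^3$. I would choose a basis of $\sL$ reduced with respect to the box-norm $\|(z_1, z_2)\|_B := \max(|z_1|/Y_1, |z_2|/Y_2)$ (constructed via successive minima, as in the proof of Lemma \ref{siegel}), so that every lattice point in $[-Y_1, Y_1] \times [-Y_2, Y_2]$ may be written as $u \mathbf{b}_1 + v \mathbf{b}_2$ with $|u| \le U$, $|v| \le V$, $U, V \ge 1$, and
\[UV \ll 1 + \frac{Y_1 Y_2}{Y_3^3}.\]

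Writing $y_1 = L_1(u,v)$, $y_2 = L_2(u,v)$ for the corresponding linear forms and multiplying the equation by $-a_3 y_3$, the condition on $(u,v)$ becomes that
\[F^*(u,v) := -a_3 y_3\bigl\{a_1 x_1^2 L_1(u,v)^3 + a_2 x_2^2 L_2(u,v)^3\bigr\} = (a_3 y_3^2 x_3)^2\]
is a non-zero perfect square. The form $F^*$ is a binary cubic form with integer coefficients of polynomial height in $B$; it has no repeated factor, because over $\overline{\Q}$ it splits as $\prod_{\zeta}(L_1(u,v) - \zeta L_2(u,v))$ for the three distinct cube roots $\zeta$ of $-a_2 x_2^2/(a_1 x_1^2)$, and these factors are non-proportional since $L_1, L_2$ are linearly independent. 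Lemma \ref{q3} then yields
\[\card\{(u,v) \in [-U,U] \times [-V,V] : F^*(u,v) \text{ a non-zero square}\} \ll (UV)^{2/3} B^\ep.\]
Since each valid $(y_1, y_2)$ determines $x_3$ uniquely as a positive square root, summing over the $\ll B^\ep$ residues $\gamma$ produces the claimed bound.

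The main technical point will be the box-adapted basis reduction: a naive Lagrange reduction in the Euclidean norm would yield only $UV \ll (Y_1 + Y_2)^2/Y_3^3$, which is insufficient when $Y_1, Y_2$ differ substantially. Reducing instead with respect to $\|\cdot\|_B$ recovers the geometric-mean bound $UV \ll Y_1 Y_2/Y_3^3$ and so delivers the exponent $2/3$ on $Y_i Y_j$ required by the lemma.
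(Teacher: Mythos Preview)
Your proposal is correct and follows essentially the same route as the paper: reduce modulo $y_3^3$, split the cubic congruence into $O_\ep(B^\ep)$ linear classes $y_1\equiv\gamma y_2\modd{y_3^3}$, reduce the resulting determinant-$y_3^3$ lattice with respect to the box norm $\max(|z_1|/Y_1,|z_2|/Y_2)$, and apply Lemma~\ref{q3} to the square-free cubic $F^*$. The one point to tighten is the claim ``$U,V\ge 1$ with $UV\ll 1+Y_1Y_2/Y_3^3$'': when the second successive minimum of the scaled lattice exceeds $1$ this fails as stated, and the paper instead disposes of that edge case separately by observing that then $v=0$, whence coprimality of $(y_1,y_2)$ forces at most one solution.
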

\begin{proof}
Without loss of generality we may take $i=1$, $j=2$, and $k=3$. 
The congruence $a_1x_1^2+a_2x_2^2\lambda^3\equiv 0\modd{y_3^3}$ has 
$O(3^{\omega(y_3)})$ solutions $\lambda\modd{y_3^3}$, so that the pair $(y_1,y_2)$ lies on one of 
$O_\ep(B^\ep)$ lattices $\mathsf{\Lambda}\subseteq\Z^2$ of the shape
\[\mathsf{\Lambda}=\{(u_1,u_2)\in\Z^2: u_2\equiv \lambda u_1\modd{y_3^3}\}.\]
We proceed to count solutions $(x_3,y_1,y_2)$ of (\ref{axy}) for which $(y_1,y_2)$ lies on some particular lattice
$\mathsf{\Lambda}$. We write $D=$Diag$(Y_1,Y_2)$, so that $||D^{-1}(y_1,y_2)||_\infty\le 1$. Moreover
$D^{-1}(y_1,y_2)$ lies in the lattice $D^{-1}\mathsf{\Lambda}$, which will have determinant 
$Y_1^{-1}Y_2^{-1}y_3^3$. We now choose a basis $\mathbf{g},\mathbf{h}$ for $D^{-1}\mathsf{\Lambda}$
with 
\[||\mathbf{g}||_\infty=\lambda_1\;\;\;\mbox{and}\;\;\; ||\mathbf{h}||_\infty=\lambda_2,\]
where $\lambda_1,\lambda_2$ 
are the successive minima of $D^{-1}\mathsf{\Lambda}$ for the $||\cdot||_\infty$ norm.  Then
any $(v_1,v_2)\in D^{-1}\mathsf{\Lambda}$ may be written as $u\mathbf{g}+v\mathbf{h}$ for integers $u$ 
and $v$ satisfying
\[ u\ll \frac{||u\mathbf{g}+v\mathbf{h}||_\infty}{\lambda_1}\;\;\;\mbox{and}\;\;\;
v\ll \frac{||u\mathbf{g}+v\mathbf{h}||_\infty}{\lambda_2}.\]
In particular, $D^{-1}(y_1,y_2)=u\mathbf{g}+v\mathbf{h}$ with $|u|\le U$ and $|v|\le V$, where $U\ll\lambda_1^{-1}$
and $V\ll\lambda_2^{-1}$, so that
\[UV\ll (\lambda_1\lambda_2)^{-1}\ll \det(D^{-1}\mathsf{\Lambda})^{-1}=Y_1Y_2y_3^{-3}.\]
We then have $(y_1,y_2)=uD\mathbf{g}+vD\mathbf{h}$, where $D\mathbf{g}$ and $D\mathbf{h}$ are in
$\mathsf{\Lambda}$, and so are integer vectors. We therefore conclude that there are integral linear forms
$L_1(u,v)$ and $L_2(u,v)$ such that $y_1=L_1(u,v)$ and $y_2=L_2(u,v)$. Since $\mathbf{g}$ and 
$\mathbf{h}$ are linearly independent, so are $L_1$ and $L_2$.

We now define 
\[F(X,Y)=-a_3y_3\{a_1x_1^2L_1(X,Y)^3+a_2x_2^2L_2(X,Y)^3\},\]
so that $F(u,v)=(a_3y_3^2x_3)^2$ is a non-zero square whenever $y_1=L_1(u,v)$ and $y_2=L_2(u,v)$
for a solution of (\ref{axy}). Moreover, $F$ cannot have a repeated factor since $X^3+Y^3$ does not, and 
$L_1$ and $L_2$ are linearly independent. We may therefore apply Lemma \ref{q3}, which shows that
there are $O((UV)^{2/3}(\log B)^2)$ relevant pairs $y_1,y_2$. Since the number of possible $\mathsf{\Lambda}$
is $O_\ep(B^\ep)$ we conclude that there are 
\[\ll_\ep (UV)^{2/3}(\log B)^2B^\ep\ll_\ep (Y_1Y_2)^{2/3}Y_3^{-2}B^{2\ep}\]
triples $(x_3,y_1,y_2)$, provided that $U,V\ge 1$. However if $U<1$, say, then $u=0$ and so $y_1$ and 
$y_2$ cannot  be coprime unless $v=\pm 1$. Thus there is at most one suitable set of positive integer 
values $y_1,y_2,x_3$  in this case, and similarly if $V<1$. The lemma then follows on re-defining $\ep$.
\end{proof}

Lemmas \ref{YYY} and  \ref{XXY} have the following corollary.
\begin{lemma}\label{CXXY}
Let $i,j,k$ be a permutation of the indices $1,2,3$, and let $\ep>0$ be given. Then
\beql{XXXe}
N_0(B)\ll_\ep  \{X_iX_jY_k+B^{6/11}\}B^\ep.
\eeq
\end{lemma}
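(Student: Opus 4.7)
The plan is to combine Lemmas \ref{YYY} and \ref{XXY} via a geometric-mean interpolation. First, applying Lemma \ref{XXY} and summing the resulting bound over the at most $X_iX_jY_k$ admissible triples $(x_i,x_j,y_k)$ yields
\[
N_0(B) \ll_\ep \bigl(X_iX_jY_k + T\bigr)B^\ep, \qquad T = X_iX_j(Y_iY_j)^{2/3}Y_k^{-1}.
\]
The first summand already matches the $X_iX_jY_k$ piece of the claimed bound, so it suffices to bound $T$ by $B^{6/11+\ep}$ \emph{after} intersecting with the independent estimate $N_0(B) \ll_\ep (Y_1Y_2Y_3 + B^{1/2})B^\ep$ coming from Lemma \ref{YYY}. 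Using the trivial inequality $\min(A+B,C) \le A + \min(B,C)$ together with $B^{1/2}\le B^{6/11}$, the proof reduces to showing that $\min(T,\,Y_1Y_2Y_3) \ll B^{6/11}$.

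For this I would exploit $\min(A,B) \le A^{\alpha}B^{1-\alpha}$ for a parameter $\alpha\in[0,1]$ to be chosen. A short expansion, combined with the constraint $X_l \ll B^{1/2}Y_l^{-3/2}$ dictated by (\ref{range1}), gives
\[
T^{\alpha}(Y_1Y_2Y_3)^{1-\alpha} \ll B^{\alpha}\,Y_i^{1-11\alpha/6}\,Y_j^{1-11\alpha/6}\,Y_k^{1-2\alpha}.
\]
Setting the exponents of $Y_i$ and $Y_j$ to zero forces the choice $\alpha = 6/11$, for which the exponent of $Y_k$ becomes $-1/11 < 0$. Since each $Y_l\ge 1$, the $Y$-factors contribute at most $1$ and we are left with $B^{6/11}$, as required.

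There is no real obstacle beyond identifying the correct interpolation exponent. The exponent $6/11$ in the statement is simply what is forced by this balancing: the symmetry between $Y_i$ and $Y_j$ gives a single linear constraint on $\alpha$, which uniquely determines its value, and the only thing to check is that the remaining $Y_k$-exponent happens to be non-positive. This also makes clear why the bound is symmetric in $i$ and $j$ but distinguishes the role of $k$, and why Lemma \ref{YYY} (which is symmetric in all three indices) is the right object to interpolate against the asymmetric estimate produced by Lemma \ref{XXY}.
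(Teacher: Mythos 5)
Your proof is correct and follows essentially the same route as the paper: both start from the bound $N_0(B)\ll_\ep\{X_iX_jY_k+X_iX_j(Y_iY_j)^{2/3}Y_k^{-1}\}B^\ep$ obtained by summing Lemma \ref{XXY} over $(x_i,x_j,y_k)$, then interpolate the second term against $Y_1Y_2Y_3$ from Lemma \ref{YYY} with exponent $\alpha=6/11$ and invoke $X_l^2Y_l^3\ll B$. The only cosmetic difference is that you derive $\alpha=6/11$ by zeroing the $Y_i,Y_j$ exponents whereas the paper simply writes down the $5/11$–$6/11$ split and checks that the product collapses to $(X_i^2Y_i^3)^{3/11}(X_j^2Y_j^3)^{3/11}Y_k^{-1/11}$.
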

\begin{proof}
It follows immediately from Lemma \ref{XXY} that
\[N_0(B)\ll_\ep  \{X_iX_jY_k+X_iX_j(Y_iY_j)^{2/3}Y_k^{-1}\}B^\ep.\]
Thus either $N_0(B)\ll_\ep  X_iX_jY_kB^\ep$ or $N_0(B)\ll_\ep X_iX_j(Y_iY_j)^{2/3}Y_k^{-1}B^\ep$.
In the latter case we see from Lemma \ref{YYY} and (\ref{range1}) that either $N_0(B)\ll_\ep B^{1/2+\ep}$ or
\begin{eqnarray*}
N_0(B)&\ll_\ep & \min\{Y_1Y_2Y_3\,,\,X_iX_j(Y_iY_j)^{2/3}Y_k^{-1}\}B^\ep\\
&\le & \{Y_1Y_2Y_3\}^{5/11}\{X_iX_j(Y_iY_j)^{2/3}Y_k^{-1}\}^{6/11}B^\ep\\
&=& (X_i^2Y_i^3)^{3/11}(X_j^2Y_j^3)^{3/11}Y_k^{-1/11}B^\ep\\
&\ll& B^{6/11+\ep},
\end{eqnarray*}
which suffices for the lemma.
\end{proof}

Finally we show that the only case preventing one getting an exponent less than $3/5$ in Theorem
\ref{thm:main} is that in which all the variables are around $B^{1/5}$.
\begin{lemma}\label{crit}
Let $\delta>0$. Then we have
\[N_0(B)\ll_\ep B^{6/11+\ep}+B^{3/5-\delta+\ep}\]
for any fixed $\ep>0$, except possibly when
\[X^{1/5-3\delta/2}\le X_k\le X^{1/5+3\delta/2}\;\;\;\mbox{and}\;\;\; 
X^{1/5-\delta}\le Y_k\le X^{1/5+\delta},\;\;(k=1,2,3).\]
\end{lemma}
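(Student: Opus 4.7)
The plan is to argue by contrapositive: assuming $N_0(B) \gg_\ep B^{3/5-\delta+\ep}$ dominates both terms on the right-hand side, I will deduce that all six variables $X_k$ and $Y_k$ lie in the claimed narrow ranges around $B^{1/5}$. One may assume $\delta<3/55$, since otherwise $B^{6/11+\ep}\ge B^{3/5-\delta+\ep}$ and Lemma \ref{CXXY} alone already yields the desired bound.

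Under this assumption Lemma \ref{YYY} forces $Y_1Y_2Y_3 \gg B^{3/5-\delta}$, and applying Lemma \ref{CXXY} with each of the three permutations $(i,j,k)$ of $(1,2,3)$ gives $X_iX_jY_k \gg B^{3/5-\delta}$ in every case. Eliminating $X_i$ and $X_j$ via the constraint $X_i \le B^{1/2}Y_i^{-3/2}$ from \eqref{range1} yields the central inequality
\[Y_k \gg B^{-2/5-\delta}(Y_iY_j)^{3/2}\]
for every permutation. Multiplying the three instances together produces $(Y_1Y_2Y_3)^2 \ll B^{6/5+3\delta}$, so $T:=Y_1Y_2Y_3$ is pinned to the two-sided window $B^{3/5-\delta}\ll T\ll B^{3/5+3\delta/2}$. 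Substituting $Y_iY_j=T/Y_k$ back into the displayed inequality and using the lower bound on $T$ then gives $Y_k \gg B^{1/5-\delta}$ for each $k$.

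For the corresponding lower bound on $X_k$ I pass to the logarithmic scale, setting $\alpha_\ell = (\log Y_\ell)/\log B$ and $\beta_\ell = (\log X_\ell)/\log B$. The constraint $2\beta_\ell+3\alpha_\ell\le 1$ combined with the lower bound $\alpha_\ell \ge 1/5-\delta$ gives the one-variable inequality $\beta_\ell+\alpha_\ell \le 2/5+\delta/2$. I then add the two permutation inequalities in which $\beta_k$ appears, namely $\beta_i+\beta_k+\alpha_j\ge 3/5-\delta$ and $\beta_j+\beta_k+\alpha_i\ge 3/5-\delta$, and feed in the one-variable inequality for $\ell=i$ and $\ell=j$, thereby obtaining $2\beta_k\ge 2/5-3\delta$, i.e.\ $X_k \gg B^{1/5-3\delta/2}$. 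The matching upper bounds $X_k\ll B^{1/5+3\delta/2}$ and $Y_k\ll B^{1/5+\delta}$ then follow directly from $X_k^2Y_k^3\le B$ using the lower bounds just established.

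This is essentially a linear-programming exercise in the exponents, so there is no deep obstruction. The one delicate point is the choice of permutations at the $X_k$ step: one must add the two inequalities in which $\beta_k$ appears together, so that the $\alpha$-terms fall on indices $\ne k$ where the single-variable bound $\beta_\ell+\alpha_\ell\le 2/5+\delta/2$ can be applied. Using just one permutation, or applying the constraint $\beta_\ell+\alpha_\ell\le 2/5+\delta/2$ to the wrong index, yields a worse constant (of order $6\delta$) instead of the sharp $3\delta/2$ that the lemma asserts.
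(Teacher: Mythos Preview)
Your proof is correct. The paper takes a slightly more direct route: instead of arguing by contrapositive and doing linear programming in the log-exponents, it bounds $N_0(B)$ explicitly in terms of a single variable by taking weighted geometric means of the bounds from Lemmas~\ref{YYY} and~\ref{CXXY}. Concretely,
\[(Y_1Y_2Y_3)^{3/5}(X_iX_jY_k)^{2/5}=Y_k(X_i^2Y_i^3)^{1/5}(X_j^2Y_j^3)^{1/5}\ll Y_kB^{2/5}\]
handles the $Y$-range, and
\[(Y_1Y_2Y_3)^{1/5}(X_iX_jY_k)^{2/5}(X_iX_kY_j)^{2/5}=X_i^{2/3}(X_i^2Y_i^3)^{1/15}(X_j^2Y_j^3)^{1/5}(X_k^2Y_k^3)^{1/5}\ll X_i^{2/3}B^{7/15}\]
handles the $X$-range; the opposite inequalities then follow from $X_k^2Y_k^3\ll B$ exactly as you do. Your argument is essentially the dual of this H\"older step: rather than guessing the right weights you extract all the constraints and solve. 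Both approaches use the same ingredients and yield the same constants; the paper's is more compact, while yours is more systematic and makes it clearer why $3\delta/2$ is the sharp loss in the $X$-range.
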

\begin{proof}
From Lemmas \ref{YYY} and \ref{CXXY} we see that either $N_0(B)\ll_\ep B^{6/11+\ep}$ or
\begin{eqnarray*}
N_0(B)&\ll_\ep &\min\big(Y_1Y_2Y_3\,,\,X_iX_jY_k\big)B^{\ep}\\
&\le &(Y_1Y_2Y_3)^{3/5}(X_iX_jY_k)^{2/5}B^{\ep}\\
&=& Y_k(X_i^2Y_i^3)^{1/5}(X_j^2Y_j^3)^{1/5}B^\ep\\
&\ll& Y_kB^{2/5+\ep}
\end{eqnarray*}
for any ordering $i,j,k$ of the indices $1,2,3$, by (\ref{range1}). This proves the lemma when 
$Y_k\le B^{1/5-\delta}$; and if $X_k\ge B^{1/5+3\delta/2}$
we find from (\ref{range1}) that $Y_k\ll B^{1/5-\delta}$, so that the lemma follows in this case too.

Similarly, from Lemmas \ref{YYY} and \ref{CXXY} we see that either $N_0(B)\ll_\ep B^{6/11+\ep}$ or
\begin{eqnarray*}
N_0(B)&\ll_\ep &\min\big(Y_1Y_2Y_3\,,\,X_iX_jY_k\,,\, X_iX_kY_j\big)B^{\ep}\\
&\le &(Y_1Y_2Y_3)^{1/5}(X_iX_jY_k)^{2/5}(X_iX_kY_j)^{2/5}B^{\ep}\\
&=& X_i^{2/3}(X_i^2Y_i^3)^{1/15}(X_j^2Y_j^3)^{1/5}(X_k^2Y_k^3)^{1/5}B^\ep\\
&\ll& X_j^{2/3}B^{7/15+\ep}.
\end{eqnarray*}
This is sufficient for the lemma when $X_j\le B^{1/5-3\delta/2}$; and if $Y_j\ge B^{1/5+\delta}$
 we find from (\ref{range1}) that $X_j\ll B^{1/5-3\delta/2}$, whence the lemma follows in this case too.
 \end{proof}
 
 In light of Lemma \ref{crit} it will suffice for Theorem \ref{thm2} to handle the case in which
\beql{range2}
\begin{array}{c}
X^{1/5-1/60}\le X_k\le X^{1/5+1/60},\\
\rule{0cm}{5mm}X^{1/5-1/90}\le Y_k\le X^{1/5+1/90},\end{array} \;\;\; (k=1,2,3),
\eeq
and we therefore assume these bounds for the rest of the paper.

\section{Covering by Lines}

We can now embark on the proof of Theorem \ref{thm2}.  In following the argument the reader may 
find it helpful to think in particular about the situation in
which $|a_1|=|a_2|=|a_3|=1$ and 
\[B^{1/5}\ll X_1=X_2=X_3=Y_1=Y_2=Y_3\ll B^{1/5}.\]
Given Lemma \ref{crit} we shall refer to this as the ``Critical Case''.
\bigskip

The key idea is to consider  
the equation (\ref{axy}) as a congruence to modulus $y_3^3$.
Thus $y_3$ will play a special role, and we emphasize this by using
the notation $y_3=q$, which is more 
suggestive of the modulus of a congruence.  We proceed to classify our solutions
further, according to the value of $q=y_3$. Moreover, since
$x_1,x_2,y_1$ and $y_2$ are all coprime to $q$, we can
classify the solutions into smaller classes still, according to the value of
$\kappa\modd{q}$ for which
\beql{lamd}
x_1 y_1+ \kappa x_2 y_2\equiv 0\modd{q}.
\eeq
Throughout the rest of the paper, whenever we use the notations $q$
and $\kappa$ we shall tacitly 
assume that $\tfrac12 Y_3<q\le Y_3$ with $q$ square-free and coprime
to $a_1a_2$, 
and that $\kappa$ is taken modulo $q$, with $\gcd(q,\kappa)=1$.
We will write 
\[S(q,\kappa)=S(q,\kappa;X_1,X_2,X_3,Y_1,Y_2)\]
for the set of integral 4-tuples
$(x_1,x_2,y_1,y_2)$ satisfying (\ref{lamd}) for given values of $q$
and $\kappa$, such that 
$(x_1,x_2,x_3,y_1,y_2,q)$ is in $S_0(B)$ for some choice of $x_3$.  As
a preliminary 
observation we note that, since $q|a_1x_1^2y_1^3+a_2x_2^2y_2^3$ and 
$a_1,a_2,x_1,x_2,y_1$ and $y_2$ are all coprime to $q$, the condition
(\ref{lamd}) implies that 
\beql{c1}
a_2x_1\equiv \kappa^3 a_1x_2\modd{q}\;\;\; \mbox{and}\;\;\;
\kappa^2 a_1y_1+a_2 y_2\equiv 0\modd{q}.
 \eeq
 
We begin the main  argument with the following lemma.
\begin{lemma}\label{TheLattice}
For every $(x_1,x_2,y_1,y_2)\in S(q,\kappa)$ we have
\beql{c2}
\kappa^2a_1x_2y_1+ a_2x_2y_2\equiv 0\modd{q},
\eeq
\beql{c3}
2a_2x_1y_1+\kappa^3a_1x_2y_1+3\kappa a_2 x_2y_2\equiv 0\modd{q^2},
\eeq
and
\beql{c4}
5\kappa^2a_1a_2x_1y_1+a_2^2x_1y_2+\kappa^5a_1^2x_2y_1+5\kappa^3a_1a_2x_2y_2
\equiv 0\modd{q^3}.
\eeq

On the other hand, if $(x_1,x_2,y_1,y_2)$ satisfies the congruences
(\ref{c1}), (\ref{c2}), (\ref{c3}) and (\ref{c4}), and 
$\gcd(y_1,y_2,q)=r$, then
\beql{c5}
18(a_1x_1^2y_1^3+a_2x_2^2y_2^3)\equiv 0\modd{q^3r^2}.
\eeq
\end{lemma}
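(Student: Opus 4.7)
The proof has two parts: the forward implication (points of $S(q,\kappa)$ satisfy (c2)--(c4)) and the converse. For the forward direction, (c2) is immediate: it is just $x_2$ times the second half of (c1). For (c3) and (c4), the key is to introduce the auxiliary quantities
\[
A := a_2 x_1 - \kappa^3 a_1 x_2, \qquad B := a_2 y_2 + \kappa^2 a_1 y_1,
\]
both of which lie in $q\Z$ by (c1). Multiplying the main equation $a_1 x_1^2 y_1^3 + a_2 x_2^2 y_2^3 \equiv 0 \pmod{q^3}$ by the unit $a_2^2$ gives $a_1 (a_2 x_1)^2 y_1^3 + x_2^2 (a_2 y_2)^3 \equiv 0 \pmod{q^3}$. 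Substituting $a_2 x_1 = \kappa^3 a_1 x_2 + A$ and $a_2 y_2 = -\kappa^2 a_1 y_1 + B$, the leading $\kappa^6 a_1^3 x_2^2 y_1^3$ terms cancel, leaving
\[
a_1 A^2 y_1^3 + 2\kappa^3 a_1^2 x_2 A y_1^3 + 3\kappa^4 a_1^2 x_2^2 y_1^2 B - 3\kappa^2 a_1 x_2^2 y_1 B^2 + x_2^2 B^3 \equiv 0 \pmod{q^3}.
\]
Reducing modulo $q^2$ kills all terms of degree $\ge 2$ in $A, B$, and dividing out the unit $\kappa^3 a_1^2 x_2 y_1^2$ gives $2 A y_1 + 3\kappa x_2 B \equiv 0 \pmod{q^2}$, which expands directly to (c3).

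For (c4), I would verify the polynomial identity
\[
\text{(c4) LHS} = 2\kappa^2 a_1 W + AB, \qquad W := 2 A y_1 + 3\kappa x_2 B = \text{(c3) LHS}.
\]
The full mod-$q^3$ expansion above, divided by $a_1 y_1$ and rearranged, reads $\kappa^3 a_1 x_2 y_1 W \equiv 3\kappa^2 x_2^2 B^2 - A^2 y_1^2 \pmod{q^3}$. Combining this with the identity $4 A^2 y_1^2 - 9\kappa^2 x_2^2 B^2 = (2 A y_1 - 3\kappa x_2 B) W \equiv 0 \pmod{q^3}$ (valid because $W \equiv 0 \pmod{q^2}$ by (c3) and $A, B \in q\Z$), a short manipulation yields $2\kappa x_2 y_1 \cdot \text{(c4) LHS} \equiv \kappa x_2 B \cdot W \equiv 0 \pmod{q^3}$. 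This gives (c4) when $q$ is odd; the $2$-adic case requires a separate argument exploiting the parities of all variables, which are forced since $x_i, y_j, a_j, \kappa$ are all coprime to $q$.

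For the converse, I would work prime-by-prime, factoring $q = q' r$ with $\gcd(q', r) = 1$. At each prime $p \mid q'$, one reverses the chain: (c4) solves for $W$ modulo $q^3$ up to a factor of $2$, and substitution into the expansion above recovers $p^3 \mid \text{main LHS}$ after multiplication by $18 = 2 \cdot 3^2$, which clears the $2$ from (c4) and the $3^2$ arising from the coefficients in (c3) and in the quadratic combination used to recover (c4). At each prime $p \mid r$, one already has $p^3 \mid y_i^3$, so $p^3 \mid \text{main LHS}$ trivially; writing $y_i = p y_i'$, the higher-order congruences (c3) modulo $p^2$ and (c4) modulo $p^3$ descend to conditions on $x_1, x_2, y_1', y_2'$ modulo $p$ and $p^2$, which combined with the surviving part of (c1) force two additional factors of $p$ in $\text{main LHS}/p^3$, giving $p^5 \mid \text{main LHS}$.

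The main technical obstacle is the careful bookkeeping around the factor $18$: one must verify that the $2$-adic obstruction in the forward direction and the $3^2$ from denominators in the quadratic-to-linear passage combine to exactly $18$ and no larger factor. The descent at primes $p \mid r$ is the subtlest step, requiring a clean algebraic identity that links the mod-$p$ and mod-$p^2$ reductions of (c3), (c4) back to divisibility of $a_1 X_1^2 Y_1^3 + a_2 X_2^2 Y_2^3$ itself, rather than just of its expansion in the auxiliary variables $A, B$.
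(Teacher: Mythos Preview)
Your forward argument for (c2) and (c3) is correct and close to the paper's. The key difference is that where you introduce the \emph{additive} residues $A=a_2x_1-\kappa^3a_1x_2$ and $B=a_2y_2+\kappa^2a_1y_1$, the paper sets $u_1=a_2x_1$, $u_2=\kappa^3a_1x_2$, $v_1=\kappa^2a_1y_1$, $v_2=a_2y_2$ and then uses the \emph{multiplicative} substitution $u_1\equiv u_2(1+\alpha)$, $v_2\equiv -v_1(1+\beta)\pmod{q^3}$ with $q\mid\alpha,\beta$. The main congruence becomes $(1+\alpha)^2\equiv(1+\beta)^3\pmod{q^3}$, i.e.\ $2\alpha+\alpha^2\equiv 3\beta+3\beta^2$; reducing mod $q^2$ gives (c3), and a short manipulation then yields $4\alpha\equiv 6\beta+\alpha\beta\pmod{q^3}$, which unwinds to (c4) with \emph{no factor of $2$ lost}. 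Your route to (c4) genuinely drops a factor of $2$: you arrive at $2\kappa x_2y_1\cdot(\text{c4 LHS})\equiv 0\pmod{q^3}$, and the proposed parity rescue for $2\mid q$ does not work --- knowing every variable is odd gives only $AB\equiv 0\pmod 4$, while you need $AB\equiv 0\pmod 8$ (since $2\kappa^2a_1W\equiv 0\pmod 8$ already). The multiplicative variables are exactly what makes the argument uniform in $q$.

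Your converse is a sketch, not a proof; in particular ``(c4) solves for $W$ modulo $q^3$ up to a factor of $2$'' followed by back-substitution does not cleanly recover the main congruence, and the descent at primes $p\mid r$ is left as an unspecified identity. The paper does something quite different: after writing $u_1=u_2+qu$, $v_2=qv-v_1$, the conditions (c3) and (c4) become $q\mid\mathcal{A}$ and $q^2\mid\mathcal{B}$ for the explicit polynomials $\mathcal{A}=2uv_1+3vu_2$ and $\mathcal{B}=2\mathcal{A}+quv$, and one verifies the single polynomial identity
\[
18(u_1^2v_1^3+u_2^2v_2^3)=9qu_2v_1^2\mathcal{B}+8q^3u^2vv_1^2-8q^3uvv_1\mathcal{A}+21q^2uv_1^2\mathcal{A}+2q^3v\mathcal{A}^2-6q^2v_1\mathcal{A}^2.
\]
Every term on the right visibly carries $q^3$ (via $q\mid\mathcal{A}$, $q^2\mid\mathcal{B}$) and, since $r\mid v_1$ and $r\mid q\mid\mathcal{A}$, also carries $r^2$. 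This one line simultaneously explains the constant $18$ and dispenses with any prime-by-prime analysis.
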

\begin{remark*}
Although $\kappa$ was only defined modulo $q$, the
congruences (\ref{c3}) and (\ref{c4}) 
none the less hold modulo $q^2$ and $q^3$ respectively, irrespective
of the choice of $\kappa$. 
\end{remark*}

\begin{proof}
We obtain (\ref{c2}) trivially by multiplying the second of the
congruences (\ref{c1}) by $x_2$. 
For the other claims we begin by writing
\beql{uvd}
u_1=a_2 x_1,\; u_2=\kappa^3 a_1 x_2,\; v_1=\kappa^2 a_1 y_1,\;
v_2=a_2 y_2
\eeq
so that (\ref{c1}) produces $u_1\equiv u_2\modd{q}$ and
$v_1\equiv -v_2\modd{q}$. 
Moreover
\beql{uv}
u_1^2v_1^3+u_2^2v_2^3=
\kappa^6 a_1^2a_2^2\left(a_1x_1^2y_1^3+a_2x_2^2y_2^3\right)\equiv
0\modd{q^3}.
\eeq
We may now choose $\alpha$ and $\beta$, both divisible by $q$, such
that
\[u_1\equiv u_2(1+\alpha)\modd{q^3}\;\;\;\mbox{and}\;\;\;
v_2\equiv -v_1(1+\beta)\modd{q^3}.\]
The congruence (\ref{uv}) implies that
$(1+\alpha)^2\equiv(1+\beta)^3\modd{q^3}$, so that
\beql{albe}
2\alpha+\alpha^2\equiv 3\beta+3\beta^2+\beta^3
\equiv 3\beta+3\beta^2\modd{q^3}.
\eeq
In particular we have 
\beql{23}
2\alpha\equiv 3\beta\modd{q^2},
\eeq
whence
$2\alpha^2\equiv 3\alpha\beta\modd{q^3}$ and
$3\beta^2\equiv 2\alpha\beta\modd{q^3}$. It then follows from
(\ref{albe}) that
\[4\alpha+3\alpha\beta\equiv 4\alpha+2\alpha^2\equiv
6\beta+6\beta^2\equiv 6\beta+4\alpha\beta\modd{q^3},\]
and finally that $4\alpha\equiv 6\beta+\alpha\beta\modd{q^3}$. We
multiply by $u_2v_1$, noting that $u_2\alpha\equiv u_1-u_2\modd{q^3}$
and $v_1\beta\equiv -v_1-v_2\modd{q^3}$. This produces the congruence
$4v_1(u_1-u_2)\equiv 6u_2(-v_1-v_2)+(u_1-u_2)(-v_1-v_2)\modd{q^3}$, 
which simplifies to $5u_1v_1+u_1v_2+u_2v_1+5u_2v_2\equiv 0\modd{q^3}$.
Referring to (\ref{uvd}) we now obtain (\ref{c4}).

Similarly, the congruence (\ref{23}) yields 
$2v_1(u_1-u_2)\equiv 3u_2(-v_1-v_2)\modd{q^2}$, which simplifies to
$2u_1v_1+u_2v_1+3u_2v_2\equiv 0\modd{q^2}$, giving us (\ref{c3}).
\bigskip

To handle the remaining claim in the lemma we retain the notation (\ref{uvd}),
whence $r|v_1$ in particular. Moreover, the relations (\ref{c1}),
(\ref{c3}) and (\ref{c4}) become 
\[u_1\equiv u_2\modd{q},\;\;\; v_1+v_2\equiv 0\modd{q},\]
\beql{uv1}
2u_1v_1+u_2v_1+3u_2v_2\equiv 0\modd{q^2},
\eeq
and
\beql{uv2}
5u_1v_1+u_1v_2+u_2v_1+5u_2v_2\equiv 0\modd{q^3},
\eeq
while the claim that $q^3r^2|18(a_1x_1^2y_1^3+a_2x_2^2y_2^3)$ is equivalent to
the statement that $q^3r^2|18(u_1^2v_1^3+u_2^2v_2^3)$.

Since $q|u_1-u_2$ we may write 
$u_1=u_2+qu$ for some integer $u$, and similarly we may take
$v_2=qv-v_1$.  Then (\ref{uv1}) 
becomes 
\beql{uv1'}
A:=2uv_1+3vu_2\equiv 0\modd{q}, 
\eeq
while (\ref{uv2}) reduces to
\beql{uv2'}
B:=4uv_1+6vu_2+quv\equiv 0\modd{q^2}.
\eeq
Then, if
\[C:=18(u_1^2v_1^3+u_2^2v_2^3)=18\{(u_2+qu)^2v_1^3+u_2^2(qv-v_1)^3\},\]
we may calculate that
\[C=9qu_2v_1^2B+8q^3u^2vv_1^2-8q^3uvv_1A+21q^2uv_1^2A+2q^3vA^2-6q^2v_1A^2.\]
It then follows that $q^3r^2|C$, since $q|A$, $q^2|B$, and $r|v_1$.
\end{proof}

The rest of this section is devoted to the proof of the following lemma.
\begin{lemma}\label{lemma:qkappa}
There is a set $S_0(q,\kappa)\subseteq S(q,\kappa)$ and a collection
$\mathcal{L}(q,\kappa)$ of  
two-dimensional lattices $\mathsf{M}\subset\Z^4$
with 
\beql{S0qk}
\card S_0(q,\kappa)\ll_\ep (Y_1Y_2)^{-1/3}Y_3^{-2}B^{2/3+\ep}
\eeq
and
\[\card \mathcal{L}(q,\kappa)\ll_\ep (Y_1Y_2)^{-1/3}Y_3^{-2}B^{2/3+\ep}\]
for any fixed $\ep>0$, such that
\[S(q,\kappa)\setminus
S_0(q,\kappa)\subseteq\bigcup_{\mathsf{M}\in\mathcal{L}(q,\kappa)}\mathsf{M},\]
and with the property that the congruences (\ref{c1}), (\ref{c2}),
(\ref{c3}), and (\ref{c4}) all hold for 
any $(x_1,x_2,y_1,y_2)\in\mathsf{M}$, for every
$\mathsf{M}\in\mathcal{L}(q,\kappa)$. 

For each lattice $\mathsf{M}$ there is always at least one point
$(x_1,x_2,y_1,y_2)\in \mathsf{M}$  
where $a_1x_1^2y_1^3+a_2x_2^2y_2^3\not=0$. Moreover, if
$(x_1,x_2,y_1,y_2)$ is any non-zero point  
in $\mathsf{M}$ we will have $(x_1,x_2)\not=(0,0)$ and $(y_1,y_2)\not=(0,0)$.
\end{lemma}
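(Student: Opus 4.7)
The plan is to fix $(x_1,x_2)$ satisfying (\ref{c1}) part 1 modulo $q$ and study the sublattice of admissible $(y_1,y_2)$. By the relations established in the proof of Lemma \ref{TheLattice}, modulo $q$ the congruence (\ref{c3}) reduces to (\ref{c1}) part 2, and modulo $q^2$ the congruence (\ref{c4}) reduces (up to a factor $2$) to (\ref{c3}); so imposing all three amounts to a linear congruence modulo $q$, a refinement modulo $q^2$, and a further refinement modulo $q^3$, each linear in $(y_1,y_2)$. Consequently the admissible $(y_1,y_2)$ form a sublattice $\mathsf{L}(x_1,x_2)\subseteq\Z^2$ of determinant $q^3$ in the generic case; the possibility $r=\gcd(y_1,y_2,q)>1$ is handled by dyadic decomposition over $r$, replacing $q^3$ by $q^3/r^2$ and costing only a divisor-sum factor absorbed into $B^\ep$.

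For each valid $(x_1,x_2)$ I examine the successive minima $\lambda_1\le\lambda_2$ of $\mathsf{L}(x_1,x_2)$ relative to the rectangle $[-Y_1,Y_1]\times[-Y_2,Y_2]$, with $\lambda_1\lambda_2\asymp q^3/(Y_1Y_2)$. The number of primitive $(y_1,y_2)$ in this rectangle is $\ll Y_1Y_2/q^3+1/\lambda_1$, and vanishes when $\lambda_1\ge 1$. In the \emph{generic} regime where $Y_1Y_2/q^3$ dominates, summing over the $\ll X_1X_2/q$ valid $(x_1,x_2)$ (bounded via Lemma \ref{DASF}) yields $\ll X_1X_2Y_1Y_2/q^4$; by (\ref{range1}), (\ref{range2}) and $q\asymp Y_3$ this comfortably fits within the target $(Y_1Y_2)^{-1/3}Y_3^{-2}B^{2/3+\ep}$, so these quadruples go into $S_0(q,\kappa)$. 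In the \emph{exceptional} regime $\lambda_1<1\le\lambda_2$, every admissible $(y_1,y_2)$ lies on the 1-dimensional sublattice $\Z\mathbf{h}(x_1,x_2)$ generated by the shortest vector.

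I then group the exceptional pairs by the direction $\mathbf{h}$: for fixed primitive $\mathbf{h}\in\Z^2$ the requirement $\mathbf{h}\in\mathsf{L}(x_1,x_2)$ amounts to linear congruences in $(x_1,x_2)$ modulo $q,q^2,q^3$, cutting out a sublattice $\mathsf{K}(\mathbf{h})\subseteq\Z^2$ of determinant $\asymp q^3$. A further successive-minima analysis of $\mathsf{K}(\mathbf{h})$ produces either a contribution absorbed into $S_0(q,\kappa)$ or a short direction $\mathbf{g}(\mathbf{h})$ for $(x_1,x_2)$. To form the required 2-dimensional lattice $\mathsf{M}\subseteq\Z^4$---necessarily \emph{not} of pure product form $\Z(\mathbf{g},\mathbf{0})\oplus\Z(\mathbf{0},\mathbf{h})$, which would violate the `moreover' clause---I pick two non-proportional quadruples $P_1,P_2$ from the cell and take $\mathsf{M}=\Z P_1+\Z P_2$, choosing them so that both the $x$- and $y$-projections of $\mathsf{M}$ are non-degenerate. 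Extending (\ref{c3})--(\ref{c4}) to every element of $\mathsf{M}$ requires that the mixed bilinear terms in those congruences, applied to the ordered pairs $(P_1,P_2)$ and $(P_2,P_1)$, vanish modulo $q^2$ and $q^3$ respectively---this is enforced by checking that $P_1,P_2$ and $P_1+P_2$ all individually satisfy (\ref{c1})--(\ref{c4}) as congruences on $\Z^4$. The main obstacle will be bookkeeping: bounding the total number of distinct $\mathsf{M}$'s by $(Y_1Y_2)^{-1/3}Y_3^{-2}B^{2/3+\ep}$ (which is $\ll B^{2/15+\ep}$ in the critical case (\ref{range2})) via a Minkowski-type count of short vectors in the determinant-$q^3$ lattices $\mathsf{L}(x_1,x_2)$, combined with a dyadic summation over $\lambda_1$; the rare degenerate product-type cells where no non-degenerate $\mathsf{M}$ can be constructed are treated separately via Lemma \ref{quintic2} applied to the severely restricted resulting equation, and their contribution is re-absorbed into $S_0(q,\kappa)$.
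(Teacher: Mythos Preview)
Your approach diverges from the paper's in a fundamental structural way, and the divergence creates a genuine gap.

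You fix $(x_1,x_2)$ first, find a short direction $\mathbf{h}$ for $(y_1,y_2)$, then fix $\mathbf{h}$ and find a short direction $\mathbf{g}$ for $(x_1,x_2)$. The ``cell'' you obtain is therefore $\{(u g_1,u g_2,v h_1,v h_2):u,v\in\Z\}=\Z(\mathbf{g},\mathbf{0})\oplus\Z(\mathbf{0},\mathbf{h})$, a \emph{product} lattice. Any two points $P_1,P_2$ you pick from this cell have $P_i=(u_i\mathbf{g},v_i\mathbf{h})$, so the $x$-projection of $\Z P_1+\Z P_2$ is $\gcd(u_1,u_2)\Z\mathbf{g}$, which is one-dimensional. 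Thus you \emph{cannot} choose $P_1,P_2$ with non-degenerate $x$- and $y$-projections; every lattice your construction produces is of product type, and each contains the nonzero vector $(\mathbf{g},\mathbf{0})$ with $(y_1,y_2)=(0,0)$, violating the ``moreover'' clause. Your escape hatch (``rare degenerate product-type cells $\ldots$ treated via Lemma~\ref{quintic2}'') would have to absorb \emph{all} of the doubly-exceptional quadruples, not a rare subset. Relatedly, the lattice count is not established: you need $\ll (Y_1Y_2)^{-1/3}Y_3^{-2}B^{2/3}$ lattices (about $B^{2/15}$ in the critical regime), but your indexing by directions $\mathbf{h}$ satisfying (\ref{c1}) gives \emph{a priori} $\ll Y_1Y_2/q\asymp B^{1/5}$ candidates, and the ``Minkowski-type count $\ldots$ dyadic summation over $\lambda_1$'' is left as a placeholder.

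The paper avoids this by not separating the $(x_1,x_2)$ and $(y_1,y_2)$ coordinates at all. It passes to the Segre vector $\mathbf{w}=(x_1y_1,x_1y_2,x_2y_1,x_2y_2)\in\Z^4$, for which the congruences (\ref{c1})--(\ref{c4}) are \emph{linear}. Thus $\mathbf{w}$ lies in the dual of an explicit lattice $\sL\subset q^{-3}\Z^4$ of determinant $2q^{-6}$, and Lemma~\ref{siegel} applied to $E\sL$ (with $E=\mathrm{Diag}(X_1Y_1,X_1Y_2,X_2Y_1,X_2Y_2)$) produces $O((X_1X_2Y_1Y_2)^{2/3}q^{-2})$ hyperplanes
\[
t_1x_1y_1+t_2x_1y_2+t_3x_2y_1+t_4x_2y_2=0.
\]
When $t_1t_4=t_2t_3$ this factors and fixes one of the pairs $(x_1,x_2)$ or $(y_1,y_2)$, feeding $S_0(q,\kappa)$. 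When $t_1t_4\neq t_2t_3$ one solves $hy_1=-t_2x_1-t_4x_2$, $hy_2=t_1x_1+t_3x_2$ with $h\mid(t_1t_4-t_2t_3)$, giving a genuinely \emph{non-product} two-dimensional lattice $\mathsf{M}\subset\Z^4$: the map $(x_1,x_2)\mapsto(y_1,y_2)$ is invertible, so $(x_1,x_2)=(0,0)\Leftrightarrow(y_1,y_2)=(0,0)$, and the ``moreover'' clause is automatic. The congruences are then propagated to all of $\mathsf{M}$ by a further refinement via Lemma~\ref{Omega}, costing only $B^\ep$. This bilinear/Segre linearisation is precisely the device that produces lines rather than products, and it is what your argument is missing.
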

\begin{remark*}\label{Rqk}
In the Critical Case we will have $O(B^{2/5})$ pairs $(q,\kappa)$ and
\[\card S_0(q,\kappa)\ll_\ep B^{2/15+\ep},\]
giving a satisfactory contribution $O_\ep(B^{8/15+\ep})$ to
$N_0(B)$. 
\end{remark*}

\begin{proof}
Let $\sL\subseteq q^{-3}\Z^4$ be the lattice generated by 
\[(0,0,1,0),\;\;q^{-1}\left(0,0,\kappa^2a_1\overline{a_2},1\right),\;\; q^{-2}
\left(2,0,\kappa^3a_1\overline{a_2},3\kappa\right),\]
and
\[q^{-3}\left(5\kappa^2a_1\overline{a_2},1,
\kappa^5a_1^2\overline{a_2}^2,5\kappa^3a_1\overline{a_2}\right),\]
where $\overline{a_2}\in[1,q^3]$ denotes the inverse of $a_2$ modulo $q^3$. Then
$\det(\sL)=2q^{-6}$. Moreover if $\mathbf{g}$ is any of the above
four generators the inner product $\mathbf{w}^T\mathbf{g}$ will be an
integer for all $\mathbf{w}=(x_1y_1,x_1y_2,x_2y_1,x_2y_2)$ with
$(x_1,x_2,y_1,y_2)\in S(q,\kappa)$.  
It follows that $\mathbf{w}^T\mathbf{g}\in\Z$ for every
$\mathbf{g}\in\sL$, so that $\mathbf{w}\in\widehat{\sL}$, in the
notation of Lemma \ref{siegel}. 
If
\[E=\mathrm{Diag}\left(X_1Y_1,X_1Y_2,X_2Y_1,X_2Y_2\right)\]
we see that
\[E^{-1}\mathbf{w}=\left(\frac{x_1y_1}{X_1Y_1}\,,\,\frac{x_1y_2}{X_1Y_2}\,,\,
\frac{x_2y_1}{X_2Y_1}\,,\,\frac{x_2y_2}{X_2Y_2}\right),\]
whence $||E^{-1}\mathbf{w}||_\infty\le 1$.  Moreover we will have
$E^{-1}\mathbf{w}\in \widehat{E\sL}$. Since
$\det(E\sL)=2(X_1X_2Y_1Y_2)^2q^{-6}$ we may  
now conclude from Lemma \ref{siegel} that there is some set
$H\subseteq E\sL$ with  
\[\card H\ll 1+(X_1X_2Y_1Y_2)^{2/3}q^{-2}\ll 1+(X_1X_2Y_1Y_2)^{2/3}Y_3^{-2}\]
such that
$(E^{-1}\mathbf{w})^T\mathbf{h}=0$ for a suitable non-zero
$\mathbf{h}\in H$.  According to (\ref{range1}) we have 
\beql{range3}
X_1X_2\ll (Y_1Y_2)^{-3/2}B,
\eeq
whence
\[\card H\ll 1+(Y_1Y_2)^{-1/3}Y_3^{-2}B^{2/3}\ll (Y_1Y_2)^{-1/3}Y_3^{-2}B^{2/3},\]
by (\ref{range2}).  Since $(E^{-1}\mathbf{w})^T\mathbf{h}=0$ we have
 $\mathbf{w}^T\mathbf{g}=0$ 
 with
 \[\mathbf{g}=E^{-1}\mathbf{h}\in E^{-1}H\subseteq\sL\subseteq q^{-3}\Z^4.\] 
 The set $E^{-1}H$ has cardinality $O((Y_1Y_2)^{-1/3}Y_3^{-2}B^{2/3})$
allowing us to 
conclude that there is a set of $O((Y_1Y_2)^{-1/3}Y_3^{-2}B^{2/3})$ non-zero
 integer vectors $\mathbf{t}\in\Z^4$ such that every quadruple
 $(x_1,x_2,y_1,y_2)$ 
 in $S(q,\kappa)$ satisfies the equation
 \beql{seq}
 t_1x_1y_1+t_2x_1y_2+t_3x_2y_1+t_4x_2y_2=0
 \eeq
for some $\mathbf{t}$. According to Lemma \ref{siegel}, we have
$||\mathbf{h}||_\infty\ll (X_1X_2Y_1Y_2)^{2/3}q^{-2}$ for every
element of $H$, whence 
also $||\mathbf{g}||_\infty\ll (X_1X_2Y_1Y_2)^{2/3}q^{-2}$, since
$X_1,X_2,Y_1,Y_2$ are  
all at least 1. Hence $||\mathbf{t}||_\infty\ll
(X_1X_2Y_1Y_2)^{2/3}q\ll B^4$, say. 

We have therefore to investigate solutions to the equation
(\ref{seq}). We begin by disposing 
of the case in which $t_1t_4=t_2t_3$, which will produce the set
$S_0(q,\kappa)$. 
Suppose for example that $t_1\not=0$. Then
\[0=t_1(t_1x_1y_1+t_2x_1y_2+t_3x_2y_1+t_4x_2y_2)=(t_1x_1+t_3x_2)(t_1y_1+t_ 2y_2).\]
Thus either $t_1x_1=-t_3x_2$ or $t_1y_1=-t_2y_2$. Since we are assuming that 
$\gcd(x_1,x_2)=\gcd(y_1,y_2)=1$ it follows that $\mathbf{t}$
determines either the pair $(x_1,x_2)$ or the pair  
$(y_1,y_2)$. When $t_1t_4=t_2t_3$ but $t_1=0$, there will be some other
$t_i$ which is  
non-vanishing, and a similar argument applies. When it is $(x_1,x_2)$
that are determined by  
$q,\kappa$ and $\mathbf{t}$ we have
\[(y_1\overline{y_2})^3\equiv
-\overline{a_1}a_2\overline{x_1}^2x_2^2\modd{q^3},\] 
where, as before,  $\overline{n}\in[1,q^3]$ denotes the inverse of $n$
modulo $q^3$. Thus  
we have $y_1\equiv\lambda y_2\modd{q^3}$ where $\lambda$ is one of the
$O(3^{\omega(q)})=O_\ep(B^\ep)$ cube roots of
$-\overline{a_1}a_2\overline{x_1}^2x_2^2$  
modulo $q^3$.
The constraint $y_1\equiv\lambda y_2\modd{q^3}$ produces a two
dimensional lattice of  
determinant $q^3$, so that there are $O(1+Y_1Y_2q^{-3})$ coprime
solutions $(y_1,y_2)$ in 
the rectangle $0<y_1\le Y_1$, $0<y_2\le Y_2$, by Lemma \ref{DASF}.
Since $\tfrac12 Y_3<q=y_3\le Y_3$ we see from (\ref{range2}) that there
are $O(1)$ solutions $(y_1,y_2)$ for each $\lambda$.
This therefore produces a contribution 
\[\ll_\ep (Y_1Y_2)^{-1/3}Y_3^{-2}B^{2/3+\ep}\]
to $S_0(q,\kappa)$ from the case in which $t_1t_4=t_2t_3$
and it is $(x_1,x_2)$ that are determined by (\ref{seq}). There is a
similar argument when it 
is the values of $y_1,y_2$ which are determined, giving an overall estimate
\[\ll_\ep  (Y_1Y_2)^{-1/3}Y_3^{-2}B^{2/3+\ep}\]
for $\card S_0(q,\kappa)$, corresponding to the case in which $t_1t_4=t_2t_3$.

Turning now to the case in which $\Delta:=t_1t_4-t_2t_3\not=0$ we see
from (\ref{seq}) that 
\[y_1(t_1x_1+t_3x_2)=-y_2(t_2x_1+t_4x_2).\]
Since $y_1$ and $y_2$ are coprime we deduce that 
\beql{heq}
hy_1=-t_2x_1-t_4x_2\;\;\;\mbox{and}\;\;\; hy_2=t_1x_1+t_3x_2,
\eeq
for some $h\in\N$. Then $h|\Delta x_1$ and
$h|\Delta x_2$, whence $h|\Delta$, since $x_1$ and $x_2$ are coprime.
Thus, given $\mathbf{t}$ with $\Delta\not=0$, there are
$O_\ep(B^{\ep})$ possible values for  
$h$, each producing a line in $\mathbb{P}^3$, given by
(\ref{heq}). The corresponding integral points 
$(x_1,x_2,y_1,y_2)$ then lie on a two-dimensional lattice,
$<\mathbf{g},\mathbf{h}>$ say, in $\Z^4$.

We proceed to show that the conditions (\ref{c1}), (\ref{c2}),
(\ref{c3}), and (\ref{c4}) restrict us to a  
small number of sublattices of $<\mathbf{g},\mathbf{h}>$, which we
produce by a process of  
successive refinements. It may happen that all the points
$u\mathbf{g}+v\mathbf{h}$ already satisfy  
(\ref{c1}), but if not this condition will restrict $(u,v)$ to lie on
a sublattice of $\Z^2$, so that  
$(x_1,x_2,y_1,y_2)$ will run over a sublattice,
$<\mathbf{g}',\mathbf{h}'>$ say, of  
$<\mathbf{g},\mathbf{h}>$. The condition (\ref{c2}) is a trivial
consequence of (\ref{c1}). For (\ref{c3}) 
we write
\[(x_1,x_2,y_1,y_2)=u\mathbf{g}'+v\mathbf{h}'=
\left(x_1(u,v),x_2(u,v),y_1(u,v),y_2(u,v)\right),\] 
say, so that (\ref{c3}) corresponds to a quadratic congruence
$Q(u,v)\equiv 0\modd{q^2}$. 
We now apply Lemma \ref{Omega} with $r=q^2$. Since $q$ is square-free,
each line (\ref{heq}) produces at most 
$2^{2\omega(q)}$ two-dimensional lattices containing the relevant
$(u,v)$, and on which (\ref{c1}), 
(\ref{c2}) and (\ref{c3}) all hold. We can add the condition
(\ref{c4}) in the same way, producing a total 
of at most $2^{2\omega(q)}\times 2^{3\omega(q)}\ll_\ep B^\ep$
lattices, as required for the lemma. 

Any lattice on which $a_1x_1^2y_1^3+a_2x_2^2y_2^3$ vanishes
identically cannot contribute to 
$S(q,\kappa)$ and may therefore be discarded. Moreover the points on
any lattice $\mathsf{M}$ satisfy 
equations of the type (\ref{heq}). Thus if $x_1=x_2=0$ for a point on
such a lattice one sees that 
$y_1=y_2=0$, and similarly, if $y_1=y_2=0$ we will have
$x_1=x_2=0$. This concludes the proof 
of the lemma.
\end{proof}

\section{Awkward Cases}

Before proceeding further we consider points corresponding to the exceptional lines arising from
Lemma \ref{except}. Since 
\[(a_1a_2)^2\{a_1x_1^2y_1^3+a_2x_2^2y_2^3\}=(a_2x_1)^2(a_1y_1)^3+(a_1x_2)^2(a_2y_2)^3\]
it turns out that we will be interested (in Section \ref{S6}) in linear forms for which 
$L_1(u,v)=a_2x_1$, $L_2(u,v)=a_1x_2$,
$M_1(u,v)=a_1y_1$ and $M_2(u,v)=a_2y_2$.
\begin{lemma}\label{4-5}
Let $S_1(B)$ be the set of points $(x_1,x_2,x_3,y_1,y_2,y_3)\in S_0(B)$ for which there exist 
non-zero rationals $g$ and $\nu$ such that
\[a_2x_1=\nu\{4a_1y_1-5g^2a_2y_2\},\;\;\;\mbox{and}\;\;\; a_1x_2=\nu g^3\{5a_1y_1-4g^2a_2y_2\},\]
and let $S_1^{*}(B)$ be the set of points $(x_1,x_2,y_1,y_2)$ for which there is a corresponding
pair $(x_3,y_3)$ with $(x_1,x_2,x_3,y_1,y_2,y_3)\in S_1(B)$.  Then
\[ \card S_1(B)=\card S_1^*(B) \ll B^{1/2}.\]
\end{lemma}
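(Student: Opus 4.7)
My approach is to exploit the factorisation identity displayed just after Lemma~\ref{except}. Specialising it to $L_1 = a_2 x_1$, $L_2 = a_1 x_2$, $M_1 = a_1 y_1$, $M_2 = a_2 y_2$ yields
\[
(a_1 a_2)^2\bigl(a_1 x_1^2 y_1^3 + a_2 x_2^2 y_2^3\bigr) = \nu^2 Q^2 L,
\]
where $L = a_1 y_1 + g^2 a_2 y_2$ and $Q = 4(a_1 y_1)^2 - 7g^2 (a_1 y_1)(a_2 y_2) + 4g^4(a_2 y_2)^2$. I would then substitute the equation (\ref{axy}) on the left hand side and multiply both sides by $-a_3 y_3$ to arrive at
\[
(-a_3 y_3 L)(\nu Q)^2 = (a_1 a_2 a_3 x_3 y_3^2)^2,
\]
forcing the rational $-a_3 y_3 L$ to be a non-negative rational square.

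Writing $g = p/q_0$ in lowest terms and multiplying by $q_0^2$, this square condition becomes the integer equation $-a_3 y_3 (a_1 q_0^2 y_1 + a_2 p^2 y_2) = t^2$ for some $t \in \mathbb{Z}_{\ge 0}$. Because $y_3$ is square-free and coprime to $a_3$ (by the hypotheses of Theorem~\ref{thm2}), one has $y_3 \mid t$; writing $t = y_3 s$, the condition recasts as
\[
-a_3\bigl(a_1 q_0^2 y_1 + a_2 p^2 y_2\bigr) = y_3 s^2,
\]
exhibiting the integer on the left as a square-full integer whose square-free part is $y_3$ and whose square cofactor is $s^2$; the pair $(y_3, s)$ is thereby uniquely determined by this integer value.

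The plan is now to bound $\card S_1^*(B)$ by $O(B^{1/2})$ via the observation that the map $(x_1, x_2, y_1, y_2) \mapsto (x_3, y_3)$, where $(x_3, y_3)$ is recovered from the unique square-full decomposition of $-a_3^{-1}(a_1 x_1^2 y_1^3 + a_2 x_2^2 y_2^3)$, has image in the set of square-full integers of magnitude at most $B/|a_3|$. Since there are $O(B^{1/2})$ such square-full integers, it suffices to verify that the fibres of this map are of bounded size. Fixing $(x_3, y_3)$ and writing $W := -a_3 x_3^2 y_3^3$, the identity $\nu^2 Q^2 L = (a_1 a_2)^2 W$ pins down $\nu$ up to sign in terms of $(g, y_1, y_2)$; combined with the integrality of $x_1, x_2$ coming from the parametrisation and the dyadic range restrictions, I expect this to leave only $O(1)$ admissible choices.

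The main obstacle is precisely establishing this bounded multiplicity. Concretely, taking square roots in the centred identity yields the cleaner relation $\nu Q s = \pm a_1 a_2 a_3 x_3 y_3 q_0$, which ties $\nu$ to the remaining integer data once $(x_3, y_3)$ is fixed. The remaining freedom sits in $(p, q_0, y_1, y_2, s)$ subject to the single diophantine constraint $a_1 q_0^2 y_1 + a_2 p^2 y_2 = -y_3 s^2/a_3$, together with the coprimality requirements from $S_0(B)$ and the requirement that the induced $x_1, x_2$ be positive integers in their dyadic boxes. I anticipate handling this by observing that fixing $(y_1, y_2, y_3, s)$ forces $(q_0/p)^2$ to be a specific rational, determining $(p, q_0)$ up to sign, and that the surviving joint integrality and size conditions are rigid enough to leave only an absolutely bounded number of lifts per square-full value, yielding the claimed bound.
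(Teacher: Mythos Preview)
Your proposal has a genuine gap: the central claim, that the fibre of the map $(x_1,x_2,y_1,y_2)\mapsto (x_3,y_3)$ over each square-full value has size $O(1)$, is never established, and the sketch you give for it does not work. Fixing $(y_1,y_2,y_3,s)$ leaves the single relation
\[
a_1 q_0^2 y_1 + a_2 p^2 y_2 = -y_3 s^2/a_3,
\]
which is one equation in the two integer unknowns $p,q_0$; it is a conic and can have many coprime integer points, so it does not force $(q_0/p)^2$ to be a specific rational. Moreover $s$ itself is a function of $(p,q_0,y_1,y_2)$ through the left-hand side, not of $(x_3,y_3)$ alone, so ``fixing $s$'' is not a constraint imposed by the fibre. (Incidentally, the displayed integer $-a_3(a_1q_0^2y_1+a_2p^2y_2)=y_3s^2$ need not be square-full: take $s=1$.) Once you try to count honestly, the free parameters are $(y_1,y_2,p,q_0)$, and with only the size bounds $y_i\ll Y_i$, $|p|^3\ll X_2$, $|q_0|^3\ll X_1$ you get far more than $O(1)$ per fibre.

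The paper proceeds quite differently. It writes $g=\alpha/\beta$ and $\nu/\beta^5=\gamma/\delta$ in lowest terms, and uses the coprimality and square-free hypotheses of Theorem~\ref{thm2} to force $\gamma=\pm1$ and $\delta\mid 144$. The two displayed relations then give $|\alpha|^3\ll|a_1|X_2$, $|\beta|^3\ll|a_2|X_1$, and confine $(y_1,y_2)$ to an ellipse of area $O(X_1X_2|\alpha\beta|^{-5})$; Lemma~\ref{DASF} counts the coprime lattice points there, and summation over $\alpha,\beta,\delta$ yields $\ll(X_1X_2)^{1/3}+X_1X_2\ll B^{1/2}$ directly. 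The key input you are missing is precisely this rigidity of $\gamma$ and $\delta$ coming from the arithmetic conditions in (\ref{gcd}); that is what converts the rational parameters $g,\nu$ into a genuinely finite parameter space over which one can sum.
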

\begin{remark*}
In fact one can easily prove a rather better bound, but this is more than enough for our purposes.
\end{remark*}
\begin{proof}
We write $g=\alpha/\beta$ with $\gcd(\alpha,\beta)=1$ and $\nu/\beta^5=\gamma/\delta$ with
$\gcd(\gamma,\delta)=1$, so that
\beql{e1}
a_2x_1\delta=\gamma\beta^3\{4a_1\beta^2 y_1-5a_2 \alpha^2 y_2\}
\eeq
and
\beql{e2}
a_1x_2\delta=\gamma\alpha^3\{5a_1\beta^2 y_1-4a_2 \alpha^2 y_2\}.
\eeq
Thus $\gamma$ divides both $a_2x_1\delta$ and $a_1x_2\delta$. Since
$\gamma$ and $\delta$ are  
coprime we then see from the constraints (\ref{gcd}) that $\gamma=\pm
1$, and indeed by changing the signs 
of $\alpha$ and $\beta$ if necessary, we may assume that $\gamma=1$.  

We next proceed to prove that $\delta|144$.  We begin by eliminating
$y_2$ from (\ref{e1}) to deduce that $\delta|9(\alpha\beta)^5 a_1y_1$. Similarly, 
eliminating $y_1$ yields $\delta|9(\alpha\beta)^5 a_2y_2$, 
and since the constraints (\ref{gcd}) show that
$\gcd(a_1y_1,a_2y_2)=1$ we conclude that  
$\delta|9(\alpha\beta)^5$.

Suppose now that $p^e||\delta$. The prime $p$ cannot divide both
$\alpha$ and $\beta$, and we may  therefore 
assume without loss of generality that $p\nmid\beta$, say. Let
$p^f||\alpha$. Then $p^{\min(e,2f)}|4a_1y_1$, by 
(\ref{e1}). However the conditions of Theorem \ref{thm2} ensure that
$a_1y_1$ is square-free, so that we must 
have 
\beql{in1}
\min(e,2f)\le \left\{\begin{array}{cc} 1, & p\ge 3,\\ 3,\ & p=2.
\end{array} \right. 
\eeq
Suppose next that $p\ge 3$. If $f=0$ then $p\nmid\alpha\beta$, and so
$p^e|9$, since $\delta|9(\alpha\beta)^5$. 
Thus if $f=0$ either $p=3$ and $e\le 2$ or $p>3$ and $e=0$. On the
other hand, if $f\ge 1$ the inequality  
(\ref{in1}) yields $e\le 1$, whence $p^2|a_1x_2$ by (\ref{e2}). Hence
$p|x_2$ since $a_1$ is square-free. 
Moreover, if $e=1$ and $f\ge 1$ the equation (\ref{e1}) shows that
$p|a_1y_1$. This however is impossible 
because $x_2$ is coprime to $a_1y_1$ in Theorem \ref{thm2}. We
therefore conclude that $e=0$ whenever  
$p\ge 3$ and $f\ge 1$. Thus if $p$ is odd, the exponent $e$ can only
be positive when $p=3$, in which  
case $e\le 2$.

Finally, for $p=2$, if $e\ge 3f+2$ the equation (\ref{e2}) would show
that $2^2|5a_1\beta^2 y_1-4a_2 \alpha^2 y_2$, 
whence $4|a_1y_1$. Since $a_1y_1$ must be square-free we conclude that
$e\le 3f+1$, and the second part 
of (\ref{in1}) then shows that $e\le 4$. We may now deduce that
$\delta|144$ as claimed. 

To complete the proof of the lemma we now observe that $\alpha^3\ll
|a_1|X_2$ and $\beta^3\ll |a_2|X_1$, by 
(\ref{e2}) and (\ref{e1}) respectively.  Moreover these equations show that 
the point $(a_1\beta^2 y_1,a_2\alpha^2 y_2)$ lies in an
ellipse, centred on the origin, of area
$O(|a_1a_2|X_1X_2|\alpha\beta|^{-3})$, and hence that $(y_1,y_2)$ is  
restricted to an ellipse, also centred on the origin, of area
$O(X_1X_2|\alpha\beta|^{-5})$. Such an ellipse 
contains 
\[\ll 1+X_1X_2|\alpha\beta|^{-5}\]
lattice points with $\gcd(y_1,y_2)=1$, by Lemma \ref{DASF}.
Given $\alpha,\beta$ and $\delta$, the values of $y_1$ and $y_2$
determine $x_1$ and $x_2$ via 
(\ref{e1}) and (\ref{e2}), and there is then at most one choice for
$x_3$ and $y_3$ satisfying the conditions of  
Theorem \ref{thm2}. Thus the total contribution to
$N_0(B)$ is
\begin{eqnarray*}
  &\ll& \sum_{|\alpha|\ll (|a_1|X_2)^{1/3}}\sum_{|\beta|\ll
    (|a_2|X_1)^{1/3}}\{1+X_1X_2|\alpha\beta|^{-5}\}\\
&\ll& (|a_1a_2|X_1X_2)^{1/3}+X_1X_2\\
&\ll& (B/X_1Y_1^3)^{1/3}(B/X_2Y_1^3)^{1/3}+X_1X_2
\end{eqnarray*}
by (\ref{range1}). The required bound then follows from (\ref{range2}).

\end{proof}

Another awkward situation is that in which one of the lattices $\mathsf{M}$ in
Lemma \ref{lemma:qkappa} contains a short vector for which
$a_1x_1^2y_1^3+a_2x_2^2y_2^3=0$. Our next result shows that this case contributes 
relatively little to $S_0(B)$.
\begin{lemma}\label{awkward}
Define
\beql{Ddef}
D=\mathrm{Diag}(X_1,X_2,Y_1,Y_2),
\eeq
and let $Q(B;R)$ be the set of pairs $(q,\kappa)$ for which there is
an integer vector $(x_1,x_2,y_1,y_2)$ 
satisfying $a_2x_1\equiv\kappa^3 a_1 x_2\modd{q}$ and
$a_1x_1^2y_1^3+a_2x_2^2y_2^3=0$, 
with $(x_1,x_2)\not=(0,0)$, $(y_1,y_2)\not=(0,0)$, and
$||D^{-1}(x_1,x_2,y_1,y_2)||_\infty\le R$.  
Then
\beql{QBR}
\card Q(B;R)\ll_\ep R(Y_1Y_2)^{-3/4}Y_3B^{1/2+\ep}
\eeq
for any fixed $\ep>0$. 

Moreover
\beql{Sqk}
\card S(q,\kappa)\ll_\ep Y_1Y_2Y_3^{-1}B^\ep
\eeq
for any pair $(q,\kappa)$ and any fixed $\ep>0$. Thus the number 
of points 
\[(x_1,x_2,x_3,y_1,y_2,q)\in S_0(B)\]
for which 
the corresponding pair $(q,\kappa)$ lies in $Q(B;R)$ is
\beql{S4b}
\ll_\ep R(Y_1Y_2)^{1/4}B^{1/2+\ep},
\eeq
for any fixed $\ep>0$.
\end{lemma}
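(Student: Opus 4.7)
The plan is to prove (\ref{QBR}) and (\ref{Sqk}) in turn and to deduce (\ref{S4b}) by combining them. The organizing observation, which feeds into both (\ref{QBR}) and (\ref{S4b}), is the following: whenever $(x_1,x_2,y_1,y_2)$ satisfies (\ref{lamd}) and the equation $a_1x_1^2y_1^3+a_2x_2^2y_2^3=0$ simultaneously, dividing the first congruence in (\ref{c1}) by the second forces
\[\kappa\equiv -x_1y_1\,(x_2y_2)^{-1}\modd{q},\]
so $\kappa$ is uniquely pinned down by the vector and by $q$ alone.

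For (\ref{QBR}) I would parametrize the integer zeros of $a_1x_1^2y_1^3+a_2x_2^2y_2^3$. A standard descent argument (cf.\ the analysis of the cuspidal cubic $a_1s^2+a_2t^3=0$) yields that every non-degenerate zero has the shape
\[(x_1,x_2,y_1,y_2)=(\mu\eta_1 u^3,\mu\eta_2 v^3,\lambda\eta_3 v^2,\lambda\eta_4 u^2)\]
for some coprime pair $(u,v)\in\Z^2$, integers $\lambda,\mu\ne 0$, and constants $\eta_i$ drawn from an $O_\ep(B^\ep)$-sized set of factorisations of $a_1a_2$. The size constraint $\|D^{-1}(x_1,x_2,y_1,y_2)\|_\infty\le R$ together with $|\lambda|,|\mu|\ge 1$ restricts $\max(|u|,|v|)$ to a range determined by (\ref{range1}) and (\ref{range2}). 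A direct calculation yields $x_1y_1/(x_2y_2)=(\eta_1\eta_3/(\eta_2\eta_4))\cdot(u/v)$, so the induced residue $\kappa\modd q$ depends only on $(u,v)$ and the $\eta_i$, not on $(\lambda,\mu)$. Hence $\card Q(B;R)$ is at most $Y_3\cdot B^\ep$ times the number of admissible coprime pairs $(u,v)$; elementary estimation of that count then yields (\ref{QBR}).

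For (\ref{Sqk}) I would invoke Lemma \ref{lemma:qkappa}, writing $S(q,\kappa)=S_0(q,\kappa)\cup\bigcup_{\mathsf{M}\in\mathcal{L}(q,\kappa)}\mathsf{M}$. The contribution of $S_0(q,\kappa)$ is already under control by (\ref{S0qk}) and lies within the target range on account of (\ref{range2}). For each lattice $\mathsf{M}$, every point satisfies the congruences (\ref{c1})--(\ref{c4}), so the second part of Lemma \ref{TheLattice} implies that $a_1x_1^2y_1^3+a_2x_2^2y_2^3$ is divisible by $q^3$ along $\mathsf{M}$. Since by the last assertion of Lemma \ref{lemma:qkappa} this polynomial does not vanish identically on $\mathsf{M}$, the $q^3$-divisibility of a non-trivial polynomial of degree $5$ on a two-dimensional lattice forces $\det(\mathsf{M})\gg Y_3^3$. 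A two-dimensional analogue of Lemma \ref{DASF} then furnishes $O\bigl(X_1X_2Y_1Y_2/\det(\mathsf{M})+1\bigr)$ lattice points in the box of interest, and summing over the $O_\ep(B^\ep)$ such lattices, together with the estimate $X_1X_2\asymp Y_3^2$ (valid up to $B^\ep$ in the range (\ref{range2})), gives (\ref{Sqk}).

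Finally (\ref{S4b}) follows from the trivial estimate
\[\sum_{(q,\kappa)\in Q(B;R)}\card S(q,\kappa)\le \card Q(B;R)\cdot\max_{(q,\kappa)}\card S(q,\kappa),\]
so inserting (\ref{QBR}) and (\ref{Sqk}) produces $R(Y_1Y_2)^{1/4}B^{1/2+2\ep}$, matching (\ref{S4b}) after relabelling $\ep$. The two steps I expect to be most delicate are the rigorous version of the parametrization of integer zeros (carefully tracking how coprimality conditions involving $a_1$ and $a_2$ interact with the parameters $(u,v)$), and the lower bound $\det(\mathsf{M})\gg Y_3^3$, which requires a careful analysis of how the successive congruences (\ref{c3}) and (\ref{c4}) refine the initial two-dimensional lattice arising from (\ref{seq}).
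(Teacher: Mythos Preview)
Your combination step for (\ref{S4b}) is fine, but both of the inputs have genuine gaps.

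\medskip
\textbf{For (\ref{Sqk}).} Your route via Lemma~\ref{lemma:qkappa} does not work. First, the number of lattices in $\mathcal{L}(q,\kappa)$ is \emph{not} $O_\ep(B^\ep)$: Lemma~\ref{lemma:qkappa} only gives $\card\mathcal{L}(q,\kappa)\ll_\ep (Y_1Y_2)^{-1/3}Y_3^{-2}B^{2/3+\ep}$, which in the critical case is of order $B^{2/15}$. Summing your per-lattice bound over that many lattices already overshoots the target $Y_1Y_2Y_3^{-1}$ by this factor. Second, the point-count formula $O(X_1X_2Y_1Y_2/\det(\mathsf{M})+1)$ is not correct for a two-dimensional lattice in $\Z^4$: the number of points of $\mathsf{M}$ in the box is governed by $\det(D^{-1}\mathsf{M})$, and for a rank-two lattice this is \emph{not} $\det(\mathsf{M})/\det(D)$. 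Third, the lower bound $\det(\mathsf{M})\gg Y_3^3$ does not follow from $q^3$-divisibility of the degree-five form on $\mathsf{M}$; that divisibility constrains the \emph{content} of the form, not the geometry of the lattice. The paper avoids all of this: it counts $(y_1,y_2)$ directly from the linear congruence $\kappa^2a_1y_1+a_2y_2\equiv 0\pmod q$ via Lemma~\ref{DASF}, getting $O(Y_1Y_2/Y_3)$ pairs, and then for each such pair counts $(x_1,x_2)$ on one of $O_\ep(B^\ep)$ lattices of the shape $x_2\equiv tx_1\pmod{q^3}$, each contributing $O(1)$ points by (\ref{range2}).

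\medskip
\textbf{For (\ref{QBR}).} Your ``organizing observation'' invokes (\ref{lamd}) to pin $\kappa$ down uniquely, but the definition of $Q(B;R)$ imposes only the single congruence $a_2x_1\equiv\kappa^3a_1x_2\pmod q$; the condition (\ref{lamd}) is not available here (see the remark following the lemma). From the actual congruence you only recover $\kappa^3$, and even that only when $\gcd(x_2,q)=1$. When a prime of $q$ divides $x_2$ (hence also $x_1$) the congruence places no constraint on $\kappa$ modulo that prime, so your claimed determination of $\kappa$ fails. The paper's argument handles exactly this by writing $a_2x_1=ju_1^3$, $a_1x_2=ju_2^3$ and introducing $h=\gcd(j,q)$: one gets $O_\ep(hB^\ep)$ values of $\kappa$ modulo $q$, balanced against $O(Y_3/h)$ choices of $q$, and the size constraint $hu_i^2\le X_iR$ then controls the sum over $u_1,u_2,h$. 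Your parametrisation is in the same spirit, but without tracking the analogue of $h$ the count of $(q,\kappa)$ is not under control.
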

\begin{remark*}
The reader should notice that in the definition of $Q(B;R)$ we do not impose the conditions
(\ref{dyad}) or (\ref{gcd}), or require $a_iy_i$ to be square-free.
In the Critical Case (\ref{S4b}) is $O_\ep(RB^{3/5+\ep})$, giving a non-trivial bound as soon
as $R$ is small.
\end{remark*}
\begin{proof}
Since $(a_2x_1)^2(a_1y_1)^3=-(a_1x_2)^2(a_2y_2)^3$ we deduce that
$a_2x_1=ju_1^3$ and  
$a_1x_2=ju_2^3$ for suitable integers $j,u_1$ and $u_2$, with $\gcd(u_1,u_2)=1$.
We then have $ju_1^3\equiv \kappa^3ju_2^3\modd{q}$, and on setting
$h=\gcd(j,q)$ we find that  
$u_1^3\equiv \kappa^3u_2^3\modd{q/h}$. It follows that if $u_1,u_2,h$
and $q/h$ are given, there 
will only be $O_\ep(B^\ep)$ choices for $\kappa$ modulo $q/h$, and
hence $O_\ep(hB^\ep)$ 
choices modulo $q$. Moreover there are $O(Y_3/h)$ possibilities 
for $q/h\le Y_3/h$. Thus $u_1,u_2$ and $h$ determine $O_\ep(Y_3B^\ep)$
choices for $q$ and $\kappa$. 

Since $a_1$ and $a_2$ are square-free and coprime to $q$ the relations
$a_2x_1=ju_1^3$ and  
$a_1x_2=ju_2^3$ show that $hu_1^2|x_1$ and $hu_2^2|x_2$, showing that
$hu_1^2\le X_1R$ and 
$hu_2^2\le X_2R$. Thus, given $h$, there are $O(Rh^{-1}(X_1X_2)^{1/2})$
choices for $u_1$ and 
$u_2$. Then, summing over $h$, we see that there are
$O(R(X_1X_2)^{1/2}\log B)$ available 
 triples $u_1,u_2,h$, producing a total
 $O_\ep(R(X_1X_2)^{1/2}Y_3B^{2\ep})$ choices for $q$ and 
 $\kappa$. The estimate (\ref{QBR}) then follows from (\ref{range3}) on re-defining $\ep$.

We turn now to the proof of (\ref{Sqk}).
By Lemma \ref{DASF}, the second of the congruences (\ref{c1}) has
$O(Y_1Y_2/q+1)$ relevant  
solution pairs $(y_1,y_2)$, and since $\tfrac12 Y_3<q=y_3\le Y_3$ we see from
(\ref{range2}) that this is $O(Y_1Y_2/Y_3)$. For each of these pairs $(y_1,y_2)$ the 
values of $x_1$ and $x_2$ will lie on a lattice 
$x_2\equiv tx_1\modd{q^3}$, where $a_1y_1^3+a_2t^2y_2^3\equiv
0\modd{q^3}$. This latter 
congruence has $O_\ep(B^\ep)$ solutions $t$ modulo $q^3$, while the
lattice $x_2\equiv tx_1\modd{q^3}$ 
has $O(X_1X_2q^{-3}+1)$ relevant points $(x_1,x_2)$ via
Lemma \ref{DASF} again. A further application of (\ref{range2})
shows that this is $O(1)$, and it follows that 
$\card S(q,\kappa)\ll_\ep Y_1Y_2Y_3^{-1}B^\ep$ as required.

For the final claim of the lemma one merely has to combine (\ref{QBR})
and (\ref{Sqk}) and re-define $\ep$.
\end{proof}

\section{Basis Vectors Associated to $\mathsf{M}$}\label{S6}

Now that we have handled the exceptional cases covered by Lemmas \ref{4-5} and \ref{awkward},
we are ready to count points $(x_1,x_2,y_1,y_2)\in S(q,\kappa)$ which lie in
a particular lattice $\mathsf{M}$ in Lemma \ref{lemma:qkappa}. This will
require us to investigate $\det(\mathsf{M})$, and in particular to understand
how often it might be small.

In fact it is convenient to work with
$D^{-1}\mathsf{M}$ rather than $\mathsf{M}$ itself, where
$D$ is given by (\ref{Ddef}).
We choose a basis $\mathbf{g}$, $\mathbf{h}$ for $\mathsf{M}$ so that the
corresponding basis
$D^{-1}\mathbf{g},D^{-1}\mathbf{h}$ for $D^{-1}\mathsf{M}$ is minimal, in the sense
that  $||D^{-1}\mathbf{g}||_\infty$ and $||D^{-1}\mathbf{h}||_\infty$ are the successive
minima $\lambda_1$ and $\lambda_2$ of $D^{-1}\mathsf{M}$, with
respect to the $||\cdot||_\infty$ norm. We then have
\beql{rs0}
||D^{-1}\mathbf{g}||_\infty\le ||D^{-1}\mathbf{h}||_\infty
\eeq
and
\beql{rs}
\det\left(D^{-1}\mathsf{M}\right)\ll
||D^{-1}\mathbf{g}||_\infty||D^{-1}\mathbf{h}||_\infty\ll \det\left(D^{-1}\mathsf{M}\right).
\eeq
Moreover these basis vectors have the property that if
$||uD^{-1}\mathbf{g}+vD^{-1}\mathbf{h}||_\infty\le 1$ then 
\[|u|\ll 1/||D^{-1}\mathbf{g}||_\infty\;\;\;\mbox{and}\;\;\;
|v|\ll 1/||D^{-1}\mathbf{h}||_\infty.\]
It turns out that the basis vectors $D^{-1}\mathbf{g}$ and $D^{-1}\mathbf{h}$ will
typically have length $O(1)$, so the reader may wish to focus their attention on
this situation.
\begin{lemma}\label{useSS}
If $\mathsf{M}$ has a basis $\mathbf{g},\mathbf{h}$ as above, then 
\beql{uSS}
\card\big(\left(S(q,\kappa)\cap \mathsf{M}\right)\setminus S_1^*(B)\big)\ll_\ep  
 B^\ep\left\{||D^{-1}\mathbf{g}||_\infty^{-2/3}||D^{-1}\mathbf{h}||_\infty^{-2/3}+1\right\}.
 \eeq
\end{lemma}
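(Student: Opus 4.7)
The plan is to parametrize $\mathsf{M}$ by two integer coordinates $(u,v)$, re-express the original diophantine condition as ``a binary quintic form takes a non-zero square value'', and then invoke Lemma \ref{except}. Write each $(x_1,x_2,y_1,y_2)\in\mathsf{M}$ as $u\mathbf{g}+v\mathbf{h}$, and set
\[L_1(u,v)=a_2x_1(u,v),\;L_2(u,v)=a_1x_2(u,v),\;M_1(u,v)=a_1y_1(u,v),\;M_2(u,v)=a_2y_2(u,v).\]
The constraints $|x_i|\le X_i$, $|y_j|\le Y_j$ combined with the minimality of the basis $D^{-1}\mathbf{g},D^{-1}\mathbf{h}$ confine $(u,v)$ to a rectangle $|u|\le U$, $|v|\le V$ with $U\ll \|D^{-1}\mathbf{g}\|_\infty^{-1}$ and $V\ll \|D^{-1}\mathbf{h}\|_\infty^{-1}$.

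Setting $F(u,v)=L_1^2M_1^3+L_2^2M_2^3=(a_1a_2)^2(a_1x_1^2y_1^3+a_2x_2^2y_2^3)$, whenever $(x_1,x_2,y_1,y_2)\in S(q,\kappa)\cap\mathsf{M}$ there is an integer $x_3$ with
\[-a_3qF(u,v)=(a_1a_2a_3q^2x_3)^2,\]
a non-zero square unless $F(u,v)=0$; Lemma \ref{lemma:qkappa} ensures $F$ does not vanish identically on $\mathsf{M}$, and its vanishing locus on the lattice contributes $O(1)$ coprime points, absorbed into the ``$+1$''.

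Now apply Lemma \ref{except} to the binary quintic $G=-a_3qF$. The key observation is that the exceptional relations $L_1=\nu\{4M_1-5g^2M_2\}$ and $L_2=\nu g^3\{5M_1-4g^2M_2\}$ of Lemma \ref{except}, once one substitutes $L_i=a_*x_i$ and $M_i=a_*y_i$, match exactly the relations defining $S_1^*(B)$ in Lemma \ref{4-5}. Since we exclude $S_1^*(B)$ we are in the non-exceptional regime, so Lemma \ref{except} yields
\[\mu(U,V)\ll (UV)^{2/3}\log^2(2\|G\|UV)\ll (UV)^{2/3}B^\ep \ll \|D^{-1}\mathbf{g}\|_\infty^{-2/3}\|D^{-1}\mathbf{h}\|_\infty^{-2/3}B^\ep,\]
using that $\|G\|$ and $UV$ are polynomially bounded in $B$.

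The ``$+1$'' accounts for the residual degeneracies: if $U<1$ or $V<1$ then $(u,v)$ lies on a line and positivity-plus-coprimality in $S_0(B)$ leaves $O(1)$ valid points; if one of $L_i,M_i$ is identically zero on $\mathsf{M}$ then the corresponding $x_i$ or $y_j$ vanishes, excluding such lattices from $S_0(B)$ altogether; and if two of $L_1,L_2,M_1,M_2$ are proportional in a manner not sanctioned by Lemma \ref{except}, then one coordinate is a fixed rational multiple of another and we reduce to counting square values of a univariate polynomial, handled by Lemma \ref{quintic2}. The main obstacle is matching the exceptional-case algebra of Lemma \ref{except} to the shape of Lemma \ref{4-5}: it is the fortunate coincidence of the two $(4,5)$-patterns that permits a uniform application of the square-sieve to all remaining lattices.
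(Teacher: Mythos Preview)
Your approach is essentially the paper's, but there is one genuine technical slip you should repair. Lemma \ref{except} is stated for forms of the exact shape $L_1^2M_1^3+L_2^2M_2^3$ with $L_i,M_i\in\Z[u,v]$, and its exceptional-case characterisation is phrased in terms of those four linear forms. Your $G=-a_3qF$ is not of that shape: the scalar $-a_3q$ cannot be absorbed into $L_i^2$ or $M_i^3$ over~$\Z$. The paper avoids this by building the factor into the $M_i$ from the start, taking $M_1=-a_1a_3qy_1$ and $M_2=-a_2a_3qy_2$, so that $F=L_1^2M_1^3+L_2^2M_2^3$ is itself the perfect square $(a_1a_2a_3^2q^3x_3)^2$ and Lemma \ref{except} applies verbatim. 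Equivalently, you could observe that $(a_3q)^2G$ is of the required shape and that multiplying by a nonzero square does not affect which values are squares; either fix is trivial, but as written the invocation of Lemma \ref{except} is not quite licit. Note too that with the paper's $M_i$ the exceptional relations still reduce to those of Lemma \ref{4-5}, just with $\nu$ replaced by $-\nu a_3q$.

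Your treatment of the proportionality degeneracies is slightly different from the paper's and also a little under-specified. For $L_1\propto L_2$ (i.e.\ $x_1\propto x_2$) and $M_1\propto M_2$ (i.e.\ $y_1\propto y_2$) the paper does not reduce to a univariate polynomial at all: it observes that coprimality pins $(x_1,x_2)$ (respectively $(y_1,y_2)$) to a single positive pair and then invokes Lemma \ref{XXY} (respectively Lemma \ref{YYY}) to get $O_\ep(B^\ep)$ directly. Your univariate route via Lemma \ref{quintic2} can be made to work here, but you must check that the resulting polynomial is not a constant multiple of a square, which you do not do; in the $y_1\propto y_2$ case the polynomial is quadratic in $t$ and this check is not automatic. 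For the cross cases $L_1\propto M_2$ and $L_2\propto M_1$ both you and the paper use $\gcd(x_1,y_2)=1$ (or $\gcd(x_2,y_1)=1$) to force $\ell(u,v)=\pm 1$, parametrise by $t$, and apply Lemma \ref{quintic2}; here the paper explicitly verifies the non-square hypothesis, which you should also do.
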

Here $S_1^*(B)$ is the subset of $S(q,\kappa)$ defined in Lemma \ref{4-5}.
\begin{proof}
We write the quadruple $(x_1,x_2,y_1,y_2)\in S(q,\kappa)\cap \mathsf{M}$ as
$u\mathbf{g}+v\mathbf{h}$.  Thus we obtain
integral linear forms such that
\[x_1=x_1(u,v),\; x_2=x_2(u,v),\; y_1=y_1(u,v),\;\mbox{and}\; y_2=y_2(u,v).\]
Moreover the condition (\ref{dyad}) shows that
$||D^{-1}(x_1,x_2,y_1,y_2)||_\infty\le 1$, 
so that $||uD^{-1}\mathbf{g}+vD^{-1}\mathbf{h}||_\infty\le 1$. We therefore have
$|u|\le U$ and $|v|\le V$ with $U\ll 1/||D^{-1}\mathbf{g}||_\infty$ and
$V\ll 1/||D^{-1}\mathbf{h}||_\infty$. 

Since $x_1$ and $x_2$ must be coprime we will have $\gcd(u,v)=1$, Thus if 
$V<1$ for example, we see that $v=0$ and $u=\pm1$, giving us $O(1)$ points in total.
We therefore suppose that $U,V\ge 1$ in what follows. Moreover, if $x_1(u,v)$ 
and $x_2(u,v)$ are proportional one sees that they can give at most one pair of 
positive coprime values of $x_1$ and $x_2$, and then Lemma \ref{XXY} shows that 
there are $O_\ep(B^\ep)$ corresponding values of $x_3,y_1$ and $y_2$, in light 
of (\ref{range2}). Likewise, if
$y_1(u,v)$  and $y_2(u,v)$ are proportional we get an overall contribution
$O_\ep(B^\ep)$ in Lemma \ref{useSS}, by employing Lemma \ref{YYY}.

We now write
\[L_1(u,v)=a_2x_1(u,v),\; L_2(u,v)=a_1x_2(u,v),\]
\[M_1(u,v)=-a_1a_3qy_1(u,v),\;\mbox{and} \; M_2(u,v)=-a_2a_3qy_2(u,v).\]
Then if we define $F(u,v)=L_1(u,v)^2M_1(u,v)^3+L_2(u,v)^2M_2(u,v)^3$ we will have
\begin{eqnarray*}
F(u,v)&=&-a_3q^3(a_1a_2a_3)^2\{a_1x_1(u,v)^2y_1(u,v)^3+a_2x_2(u,v)^2y_2(u,v)^3\}\\
&=&(a_1a_2a_3q^3x_3)^2.
\end{eqnarray*}

We are now ready to apply Lemma \ref{except}. We wish to count the
number of $(u,v)$ 
for which $F(u,v)$ is a non-zero square.  We first claim that the
coefficients of  
the forms $L_1,L_2,M_1$ and $M_2$ are bounded by powers of $B$. Since
$\mathbf{g}$  
is a non-zero vector in $\mathsf{M}\subseteq\Z^4$ we have
$||\mathbf{g}||_\infty\ge 1$. 
Thus $||D^{-1}\mathbf{g}||_\infty$ is bounded below by a negative power of
$B$, and then (\ref{rs0}) and 
(\ref{rs}) show that $||D^{-1}\mathbf{g}||_\infty$ and
$||D^{-1}\mathbf{h}||_\infty$ are bounded above  
by a power of $B$. It then follows that the coefficients of $x_1(u,v),
x_2(u,v), y_1(u,v)$ 
and $y_2(u,v)$ are bounded by powers of $B$, and hence so too are the
coefficients of $L_1,L_2,M_1$ and $M_2$, and therefore also of $F$.

In order to apply Lemma \ref{except} we need to know that neither of
the linear forms  
$L_1$ or $M_1$ is proportional to  either $L_2$ or $M_2$. We have already dealt with
$x_1(u,v)$ and $x_2(u,v)$, and with $y_1(u,v)$ and $y_2(u,v)$, so we look at the 
case in which $L_1$ and $M_2$ are proportional, both integer multiples of an integral
linear form $\ell(u,v)$, say.  Since $\gcd(x_1,y_2)=1$ we must have
$\ell(u,v)=\pm 1$. We may parameterize the solutions of this linear equation
as integral linear functions $u=u_1 t+u_2$, $v=v_1 t+v_2$, where $t$ is
an integer variable of size $|t|\ll U+V$. Then $F(u,v)$ takes the shape
$A_1X_1(t)^2+A_2Y_2(t)^3$ with non-zero integer polynomials $X_1(t),Y_2(t)$ of degree at
most 1.  According to
Lemma \ref{quintic2} the number of values for $t$ which produce a non-zero square is
\beql{es1}
\ll (U+V)^{1/2}\log B\ll (UV)^{2/3}\log B
\eeq
unless $A_1X_1(t)^2+A_2Y_2(t)^3$ is a constant multiple of a square. However this cannot 
happen unless $X_1(t)$ and $Y_2(t)$ are both constant. It follows that we have at most
1 solution when $L_1$ and $M_2$ are proportional. Naturally, when $L_2$ and $M_1$
are proportional a similar argument applies.

In the statement of Lemma \ref{useSS} we exclude points in $S_1^*(B)$, these
corresponding to points on the various exceptional 
lines described in Lemmas \ref{except} and \ref{4-5}. Thus Lemma
\ref{except} gives us the bound 
\[\ll (UV)^{2/3}(\log B)^2+1\]
for the number of remaining points, whether or not two of the forms
$L_1,L_2,M_1$ and $M_2$ are 
proportional. To complete the proof of the lemma we merely note that
\[UV\ll ||D^{-1}\mathbf{g}||_\infty^{-1}||D^{-1}\mathbf{h}||_\infty^{-1}.\]
\end{proof}

We now need to estimate how frequently small generators
$\mathbf{g},\mathbf{h}$ arise. 
As a preliminary observation, we note that, if
$\mathbf{g}=(x_1,x_2,y_1,y_2)\in\mathsf{M}$,  
then we cannot have
$x_1=x_2=0$ or $y_1=y_2=0$, by the final part of Lemma \ref{lemma:qkappa}. Thus
Lemma \ref{awkward} handles all cases in which $a_1x_1^2y_1^3+a_2x_2^2y_2^3=0$. 
Our next result handles the alternative case, for which $a_1x_1^2y_1^3+a_2x_2^2y_2^3\not=0$.
\begin{lemma}\label{gensize}
Write $\mathbf{g}=(x_1,x_2,y_1,y_2)$. Then the number of triples
$\left(\mathbf{g},q,\kappa\right)$  
with $a_1x_1^2y_1^3+a_2x_2^2y_2^3\not=0$, for which 
$\mathbf{g}$ satisfies (\ref{c1}), (\ref{c2}), (\ref{c3}) and
(\ref{c4}), and such that 
\beql{DR}
||D^{-1}\mathbf{g}||_\infty\le R, 
\eeq
is $O_\ep\left((Y_1Y_2)^{-3/2}Y_3^{-3}R^7B^{2+\ep}\right)$, for any fixed $\ep>0$. 

In particular, the number of pairs $(q,\kappa)$ for which such a vector $\mathbf{g}$
exists is also $O_\ep\left((Y_1Y_2)^{-3/2}Y_3^{-3}R^7B^{2+\ep}\right)$, for any fixed $\ep>0$. 
\end{lemma}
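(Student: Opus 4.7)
The plan is to count the triples $(\mathbf{g},q,\kappa)$ by fixing $q$ first and then exploiting the divisibility condition (\ref{c5}), namely $q^3r^2\mid 18N$ with $N:=a_1x_1^2y_1^3+a_2x_2^2y_2^3\neq 0$, to restrict the allowed $\mathbf{g}$ modulo $q^3$. Since (\ref{c5}) is a consequence of (\ref{c1})--(\ref{c4}) by Lemma \ref{TheLattice}, I may work throughout with the weaker constraint $q^3\mid 18N$.

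For each fixed $q\in(Y_3/2,Y_3]$, I would first sum over $(y_1,y_2)$ in the box $|y_i|\le RY_i$. In the generic range $\gcd(y_1y_2,6q)=1$, the congruence $18a_1y_1^3x_1^2+18a_2y_2^3x_2^2\equiv 0\modd{q^3}$ is a non-degenerate binary quadratic in $(x_1,x_2)$, and Hensel lifting produces at most $2^{\omega(q)}$ square roots of $-a_2y_2^3/(a_1y_1^3)$ modulo $q^3$; consequently $(x_1,x_2)$ is confined to $O_\ep(B^\ep)$ sublattices of $\Z^2$ of determinant $q^3$, each containing $\ll R^2X_1X_2/q^3+1$ integer points inside $|x_i|\le RX_i$ by a standard lattice-point count in a rectangle (cf.~Lemma \ref{DASF}). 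Multiplying by the $\ll R^2Y_1Y_2$ choices of $(y_1,y_2)$ and by the $O_\ep(B^\ep)$ values of $\kappa\modd{q}$ compatible with (\ref{c1})---whose first part $\kappa^3a_1x_2\equiv a_2x_1\modd{q}$ admits at most $3^{\omega(q)}$ coprime solutions---yields a per-$q$ bound of $B^\ep\bigl(R^4X_1X_2Y_1Y_2q^{-3}+R^2Y_1Y_2\bigr)$. Summing over $q\in(Y_3/2,Y_3]$ via $\sum q^{-3}\ll Y_3^{-2}$, applying (\ref{range1}) in the form $X_1X_2\ll B(Y_1Y_2)^{-3/2}$, and testing against (\ref{range2}) then shows the total count is dominated by $R^7B^{2+\ep}(Y_1Y_2)^{-3/2}Y_3^{-3}$ for all $R\ge 1$; the case $R<1$ is trivial, as the box then admits essentially no $\mathbf{g}$ with $N\neq 0$.

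The main obstacle is the degenerate case where $\gcd(y_i,q)$ or $\gcd(x_i,q)$ is non-trivial: there the quadratic congruence above acquires extra solutions and the cube-root count for $\kappa$ swells by a factor of $\gcd(x_2,q)$. These contributions are handled by a dyadic decomposition on the common divisor $d\mid q$, in which the gain of a factor $d$ in the $\kappa$-count is compensated by the saving $d^{-2}$ in the number of $(x_1,x_2)$ with $d\mid\gcd(x_1,x_2)$ (forced by the first congruence in (\ref{c1})), so that the sum over $d$ converges. The ``in particular'' statement on the pairs $(q,\kappa)$ is then immediate, since each admissible pair contributes at least one triple to the count above.
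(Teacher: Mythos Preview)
Your approach diverges from the paper's in an essential way. You fix $q$ and $(y_1,y_2)$, then count $(x_1,x_2)$ on lattices of determinant $q^3$. The paper instead writes $18N=fq^3r^2$ with $r=\gcd(y_1,y_2,q)$ and $|f|\ll R^5BY_3^{-3}r^{-2}$, fixes $(x_1,x_2,f,r)$, and observes that $(y_1/r,y_2/r,q/r)$ is a primitive zero of the diagonal cubic $18a_1x_1^2X^3+18a_2x_2^2Y^3-fr^2Z^3$. Theorem~\ref{cubic} then pins down $(y_1,y_2,q)$ to $O_\ep(B^\ep)$ choices, after which $\kappa$ is determined modulo $q/r$ up to $O_\ep(B^\ep)$ by the second congruence in (\ref{c1}). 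This is an essential use of the paper's new uniform bound for diagonal cubics, which your argument bypasses entirely.

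The gap in your argument is the lattice-point count. You assert ``$\ll R^2X_1X_2/q^3+1$'' citing Lemma~\ref{DASF}, but that lemma bounds \emph{primitive} points only, and $\mathbf{g}$ here is an arbitrary vector in $\mathsf{M}\subset\Z^4$ with no coprimality imposed on $(x_1,x_2)$. For the lattice $\{x_1\equiv\lambda x_2\pmod{q^3}\}$ the honest count of all integer points in the box $|x_i|\le RX_i$ is $\ll R^2X_1X_2/q^3+R\min(X_1,X_2)$ (fix the smaller coordinate; since $RX_i\ll q^3$ under (\ref{range2}) there is at most one value of the other), and it is the second term that dominates. Carrying it through yields a contribution $R^3\min(X_1,X_2)Y_1Y_2Y_3\,B^\ep$, which under (\ref{range2}) exceeds the target $R^7(Y_1Y_2)^{-3/2}Y_3^{-3}B^{2+\ep}$ whenever $R\ll B^{7/240}$. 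Since the lemma is applied via Lemmas~\ref{Rest}--\ref{sizeRS} with $R$ ranging down to $K^{1/2}\ll 1$, this regime cannot be avoided. The underlying obstruction is that once $q^3$ dwarfs the box dimensions you are in a sparse regime with no averaging left to exploit; the paper sidesteps this precisely by leaving $q$ free and invoking Theorem~\ref{cubic} to control it simultaneously with $(y_1,y_2)$.
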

\begin{remark*}\label{Rgs}
Although the lemma gives a bound for the number of 
relevant triples $(\mathbf{g},q,\kappa)$, it does not estimate the number of corresponding
lattices $\mathsf{M}$ that might contain a suitable $\mathbf{g}$. This is because a given  
$\mathbf{g}$ might lie in more than one lattice $\mathsf{M}$.
\end{remark*}
\begin{proof}
We begin by classifying the triples $\left(\mathbf{g},q,\kappa\right)$ 
according to the value of $r=\gcd(y_1,y_2,q)$.  
Lemma \ref{TheLattice} then shows that
\[18a_1x_1^2y_1^3+18a_2x_2^2y_2^3=fq^3r^2\]
for some integer $f$, which will be non-zero. We see from (\ref{dyad}) and (\ref{range1}) that
the left hand side 
has absolute value $O(R^5B)$ and since $q=y_3>\tfrac12 Y_3$ we find that 
\[f\ll Y_3^{-3}R^5Br^{-2}.\]
The triple $(y_1/r,y_2/r,q/r)$ is a coprime solution to the equation 
(\ref{ceq}) with coefficients 
\[a=18a_1x_1^2,\;\;\; b=18a_2x_2^2,\;\;\;\mbox{and}\;\;\; c=-fr^2.\]
Moreover $a,b$ and $c$ are non-zero by our assumptions. Thus Theorem
\ref{cubic}  
shows that  $x_1,x_2,r$ and $f$ determine $O_\ep(B^\ep)$ possibilities
for $y_1,y_2$ and $q$.  
There are 
\[\ll X_1X_2Y_3^{-3}R^7Br^{-2}\ll (Y_1Y_2)^{-3/2}Y_3^{-3}R^7B^2r^{-2}\]
possible choices for $x_1,x_2$ and $f$, by (\ref{range3}), so that the total 
number of possibilities for $\mathbf{g}$ and $q$ is
$O_\ep((Y_1Y_2)^{-3/2}Y_3^{-3}R^7B^{2+\ep}r^{-2})$, 
for each $r$.

The second of the congruences 
(\ref{c1}) shows that $qr^{-1}|\kappa^2 a_1y_1r^{-1}+a_2y_2r^{-1}$,
which determines $O_\ep(B^\ep)$ values for $\kappa$ modulo $qr^{-1}$, and hence
$O_\ep(rB^\ep)$ values modulo $q$. We therefore have 
$O_\ep((Y_1Y_2)^{-3/2}Y_3^{-3}R^7B^{2+2\ep}r^{-1})$
choices for $\left(\mathbf{g},q,\kappa\right)$, for each value of
$r$. The lemma then  
follows on summing for $r\le Y_3$ and re-defining $\ep$.
\end{proof}

As noted above, a generator $\mathbf{g}=(x_1,x_2,y_1,y_2)$ cannot have
$x_1=x_2=0$ or $y_1=y_2=0$, by the final part of Lemma \ref{lemma:qkappa}.
If $a_1x_1^2y_1^3+a_2x_2^2y_2^3=0$ the pair $(q,\kappa)$ will therefore be in the set
$Q(B;R)$ defined in Lemma \ref{awkward}. Thus Lemma \ref{gensize} has the 
following corollary.

\begin{lemma}\label{Rest}
For each pair $q,\kappa$ let $\nu_1(q,\kappa;R)$ be the number of 
non-zero vectors
$\mathbf{g}$ with $||D^{-1}\mathbf{g}||_\infty\le R$ that lie in some
lattice $\mathsf{M}$ in $\mathcal{L}(q,\kappa)$. Then
\[\sum_{(q,\kappa)\not\in Q(B;R)}\nu_1(q,\kappa;R)\ll_\ep
(Y_1Y_2)^{-3/2}Y_3^{-3}R^7B^{2+\ep},\]
for any fixed $\ep>0$. 
\end{lemma}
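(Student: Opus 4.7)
The plan is to show that the hypothesis $(q,\kappa)\notin Q(B;R)$ precisely rules out those $\mathbf{g}$ for which $a_1x_1^2y_1^3+a_2x_2^2y_2^3=0$, so that every surviving vector falls under Lemma \ref{gensize}, and the bound then follows by summing.

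First I would fix a pair $(q,\kappa)\notin Q(B;R)$ and consider any non-zero $\mathbf{g}=(x_1,x_2,y_1,y_2)$ counted by $\nu_1(q,\kappa;R)$, so that $\mathbf{g}$ lies in some lattice $\mathsf{M}\in\mathcal{L}(q,\kappa)$ with $\|D^{-1}\mathbf{g}\|_\infty\le R$. By the construction of $\mathcal{L}(q,\kappa)$ in Lemma \ref{lemma:qkappa}, the congruences (\ref{c1}), (\ref{c2}), (\ref{c3}) and (\ref{c4}) all hold for $\mathbf{g}$, and the final assertion of that same lemma guarantees that $(x_1,x_2)\neq(0,0)$ and $(y_1,y_2)\neq(0,0)$.

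Next I would rule out the vanishing case. If $a_1x_1^2y_1^3+a_2x_2^2y_2^3=0$, then $\mathbf{g}$ satisfies the first congruence of (\ref{c1}), namely $a_2x_1\equiv\kappa^3a_1x_2\pmod{q}$, together with the nonvanishing conditions on the $x$-pair and $y$-pair and the bound $\|D^{-1}\mathbf{g}\|_\infty\le R$. These are exactly the defining conditions for $(q,\kappa)\in Q(B;R)$, contradicting our standing assumption. Hence every $\mathbf{g}$ counted by $\nu_1(q,\kappa;R)$ for $(q,\kappa)\notin Q(B;R)$ satisfies $a_1x_1^2y_1^3+a_2x_2^2y_2^3\neq 0$.

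Finally, I would sum over pairs $(q,\kappa)\notin Q(B;R)$. Each such $\mathbf{g}$ (together with its hosting pair $(q,\kappa)$) appears in the count of triples $(\mathbf{g},q,\kappa)$ from Lemma \ref{gensize}, since all four congruences, the nonvanishing of $a_1x_1^2y_1^3+a_2x_2^2y_2^3$, and the height bound $\|D^{-1}\mathbf{g}\|_\infty\le R$ are in force. Therefore
\[\sum_{(q,\kappa)\notin Q(B;R)}\nu_1(q,\kappa;R)\le\#\{(\mathbf{g},q,\kappa):\text{satisfying the hypotheses of Lemma \ref{gensize}}\}\ll_\ep(Y_1Y_2)^{-3/2}Y_3^{-3}R^7B^{2+\ep},\]
which is the desired bound. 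There is no genuine obstacle here: the argument is purely a bookkeeping reduction, and the only point requiring care is verifying that the nonvanishing conditions on $(x_1,x_2)$ and $(y_1,y_2)$ needed to place a vector into $Q(B;R)$ are automatically supplied by the final sentence of Lemma \ref{lemma:qkappa}.
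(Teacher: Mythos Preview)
Your proposal is correct and matches the paper's own argument essentially line for line: the paper simply remarks that a non-zero $\mathbf{g}\in\mathsf{M}$ cannot have $(x_1,x_2)=(0,0)$ or $(y_1,y_2)=(0,0)$ by Lemma~\ref{lemma:qkappa}, so if $a_1x_1^2y_1^3+a_2x_2^2y_2^3=0$ then $(q,\kappa)\in Q(B;R)$, and hence Lemma~\ref{Rest} is an immediate corollary of Lemma~\ref{gensize}. Your identification of the relevant congruence in (\ref{c1}) with the defining condition of $Q(B;R)$ is exactly the point.
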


For the second generator $\mathbf{h}$ we do not need Lemma \ref{awkward}.
\begin{lemma}\label{Sest}
For each pair $q,\kappa$ let $\nu_2(q,\kappa;S)$ be the number of 
non-zero vectors $\mathbf{h}$ with $||D^{-1}\mathbf{h}||_\infty\le S$ that lie in some
lattice $\mathsf{M}$ in $\mathcal{L}(q,\kappa)$, and with the further property
that there is a second vector $\mathbf{g}\in\mathsf{M}$, not proportional to
$\mathbf{h}$, and such that $||D^{-1}\mathbf{g}||_\infty\le ||D^{-1}\mathbf{h}||_\infty$.

Then 
\[\sum_{(q,\kappa)}\nu_2(q,\kappa;S)\ll_\ep (Y_1Y_2)^{-3/2}Y_3^{-3}S^7B^{2+\ep},\]
for any fixed $\ep>0$.
\end{lemma}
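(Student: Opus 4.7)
The plan is to follow the template of Lemma \ref{Rest}, applying Lemma \ref{gensize} to $\mathbf{h}$ in place of $\mathbf{g}$. The obstacle that must be circumvented is that here we do not exclude pairs $(q,\kappa)\in Q(B;S)$: in the proof of Lemma \ref{Rest}, this exclusion forced the non-vanishing hypothesis $a_1g_1^2g_3^3+a_2g_2^2g_4^3\ne 0$ of Lemma \ref{gensize} automatically. In the setting of $\nu_2$, the role of that exclusion is played by the auxiliary vector $\mathbf{g}$ guaranteed by the definition.

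I would split the triples counted by $\sum_{(q,\kappa)}\nu_2(q,\kappa;S)$ according to whether $a_1h_1^2h_3^3+a_2h_2^2h_4^3$ vanishes. When it does not vanish, $\mathbf{h}$ itself is of the kind counted by Lemma \ref{gensize} (it satisfies (\ref{c1})--(\ref{c4}) since it lies in $\mathsf{M}$), and Lemma \ref{gensize} with $R=S$ directly gives a contribution $\ll_\ep(Y_1Y_2)^{-3/2}Y_3^{-3}S^7B^{2+\ep}$, as required.

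When the form vanishes at $\mathbf{h}$, I use the auxiliary vector $\mathbf{g}$ to manufacture a replacement. For $k\in\{0,1,\ldots,5\}$, set $\mathbf{v}_k=\mathbf{g}+k\mathbf{h}\in\mathsf{M}$. The quantity
\[
P(k)=a_1(g_1+kh_1)^2(g_3+kh_3)^3+a_2(g_2+kh_2)^2(g_4+kh_4)^3
\]
is a polynomial of degree at most $5$ in $k$. Its homogenization $\widetilde{P}(u,v)$ cannot vanish identically, for otherwise the form $a_1X_1^2X_3^3+a_2X_2^2X_4^3$ would vanish on the full-rank sublattice $\langle\mathbf{g},\mathbf{h}\rangle\subseteq\mathsf{M}$ and hence on all of $\mathsf{M}$, contradicting the final clause of Lemma \ref{lemma:qkappa}. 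Hence $P$ has at most $5$ integer zeros and some $k^{\ast}\in\{0,1,\ldots,5\}$ gives $P(k^{\ast})\ne 0$. The resulting $\mathbf{w}=\mathbf{v}_{k^{\ast}}$ lies in $\mathsf{M}$ with $\|D^{-1}\mathbf{w}\|_\infty\le 6S$ and satisfies the non-vanishing hypothesis; Lemma \ref{gensize} with $R=6S$ then bounds the number of triples $(\mathbf{w},q,\kappa)$ by $\ll_\ep(Y_1Y_2)^{-3/2}Y_3^{-3}S^7B^{2+\ep}$.

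The main obstacle is to control the multiplicity of the map $\mathbf{h}\mapsto\mathbf{w}$. For $k^{\ast}\ne 0$, the vector $\mathbf{h}$ is determined by $\mathbf{w}$ together with the auxiliary $\mathbf{g}=\mathbf{w}-k^{\ast}\mathbf{h}$, and the admissible $\mathbf{g}$ are short vectors of $\mathsf{M}$ lying on the variety $a_1X_1^2X_3^3+a_2X_2^2X_4^3=0$. This variety cuts out at most $5$ rational lines through the origin in the $\mathsf{M}$-plane; combining this with the cubing parametrization of Lemma \ref{awkward} (whereby $a_2g_1$ and $a_1g_2$ are forced proportional to integer cubes) and with Lemma \ref{DASF} applied to the short basis of $\mathsf{M}$ constructed in Section \ref{S6} should yield the required $O_\ep(B^\ep)$ multiplicity bound. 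The case $k^{\ast}=0$, in which $\mathbf{w}=\mathbf{g}$ and $\mathbf{h}$ ranges over short lattice vectors on the same variety, is handled analogously.
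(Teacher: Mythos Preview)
Your overall strategy coincides with the paper's: split off the vectors $\mathbf{h}$ at which $a_1h_1^2h_3^3+a_2h_2^2h_4^3\ne 0$ (handled directly by Lemma~\ref{gensize} with $R=S$), and for the remaining ones use the auxiliary $\mathbf{g}$ to shift to a nearby lattice vector where the quintic does not vanish, then invoke Lemma~\ref{gensize} again with an enlarged radius. One cosmetic difference: the paper shifts in the direction of $\mathbf{g}$, taking $\mathbf{h}'=\mathbf{h}+n\mathbf{g}$ for some $1\le n\le 6$, rather than your $\mathbf{w}=\mathbf{g}+k\mathbf{h}$. Since $\|D^{-1}\mathbf{g}\|_\infty\le\|D^{-1}\mathbf{h}\|_\infty\le S$ this gives $\|D^{-1}\mathbf{h}'\|_\infty\le 7S$, and the paper then simply says ``the result follows on using this as a replacement for $\mathbf{h}$'', with no multiplicity discussion whatsoever.

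Your multiplicity argument, however, does not go through as written. In the case $k^\ast=0$ you have $\mathbf{w}=\mathbf{g}$ and must count the admissible $\mathbf{h}$: these are vectors in the same $\mathsf{M}$, lying on one of at most five lines through the origin (the zero locus of the restricted degree-$5$ form), with $\|D^{-1}\mathbf{h}\|_\infty\le S$. On any such line the number of lattice points in that range can be as large as $\asymp S/\lambda_1(D^{-1}\mathsf{M})$, which is certainly not $O_\ep(B^\ep)$. The $k^\ast\ne 0$ case has the same difficulty, since the ``admissible $\mathbf{g}$'' you need to enumerate lie on those same five lines and are subject to the same count. Neither the cube parametrisation from Lemma~\ref{awkward} nor Lemma~\ref{DASF} repairs this: the former bounds the number of \emph{pairs} $(q,\kappa)$ admitting such a vector, and the latter counts \emph{primitive} points, whereas here you need the number of (not necessarily primitive) lattice vectors of bounded $D^{-1}$-norm on a fixed line inside a fixed $\mathsf{M}$. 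So while you have put your finger on a point the paper passes over in silence, the resolution you sketch does not deliver the claimed $O_\ep(B^\ep)$ fibre bound; with the paper's orientation $\mathbf{h}'=\mathbf{h}+n\mathbf{g}$ the map is at least a translation (hence injective) for each fixed pair $(n,\mathbf{g})$, which is presumably the intuition behind treating the replacement as harmless.
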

\begin{proof}
For pairs $(q,\kappa)$ for which $\mathbf{h}=(x_1,x_2,y_1,y_2)$ with 
\[a_1x_1^2y_1^3+a_2x_2^2y_2^3\not=0\]
we may apply Lemma \ref{gensize} directly. 
In general we consider vectors 
\[\mathbf{h}'=n\mathbf{g}+\mathbf{h}=(x_1(n),x_2(n),y_1(n),y_2(n)).\]
We then set $f(n)=a_1x_1(n)^2y_1(n)^3+a_2x_2(n)^2y_2(n)^3$. The
polynomial $f(X)$ cannot  
vanish identically, by the properties of $\mathsf{M}$ established in
Lemma \ref{lemma:qkappa}. 
Since $f$ has degree 5 there is therefore some positive integer $n\le
6$ such that $f(n)\not=0$. 
This produces a vector $\mathbf{h}'\in\mathbf{M}$, with
\[||D^{-1}\mathbf{g}||_\infty\le 7||D^{-1}\mathbf{h}||_\infty\le 7S\]
to which Lemma \ref{gensize} applies. The result then follows
on using this as a replacement for $\mathbf{h}$.
\end{proof}

We can now consider how many lattices $\mathsf{M}$ have basis vectors
of a given size. We will 
use the following result to control the first term on the right of (\ref{uSS}).
\begin{lemma}\label{sizeRS}
The number of lattices
\[\mathsf{M}\in\bigcup_{(q,\kappa)\not\in Q(B;R)}\mathcal{L}(q,\kappa)\]
which have basis vectors $\mathbf{g},\mathbf{h}$ with
$||D^{-1}\mathbf{g}||_\infty\le ||D^{-1}\mathbf{h}||_\infty$ and 
\[ ||D^{-1}\mathbf{g}||_\infty\le R,\;\;\; ||D^{-1}\mathbf{h}||_\infty\le S,\]
is 
\[\ll_\ep (Y_1Y_2)^{-5/3}Y_3^{-4}(RS)^{7/2}B^{7/3+\ep}\]
for any fixed $\ep>0$.
\end{lemma}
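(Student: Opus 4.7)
My plan is to combine Lemmas \ref{Rest} and \ref{Sest} via a Cauchy--Schwarz style argument with a dyadic decomposition on the successive minima $\lambda_1=||D^{-1}\mathbf{g}||_\infty$ and $\lambda_2=||D^{-1}\mathbf{h}||_\infty$ of the minimal basis of each counted lattice, noting that by Lemma \ref{lemma:qkappa} these minimal basis vectors automatically satisfy (\ref{c1})--(\ref{c4}), so that $\mathbf{g}$ is a candidate for $\nu_1(q,\kappa;R)$ and $\mathbf{h}$ for $\nu_2(q,\kappa;S)$.

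I partition the counted lattices into dyadic blocks with $\lambda_1\asymp R'\le R$ and $\lambda_2\asymp S'\le S$. A standard lattice-point count shows that each such $\mathsf{M}$ contains $\asymp R^2/(R'S')$ integer vectors of norm at most $R$ and $\asymp S^2/(R'S')$ of norm at most $S$ in the main regime $R\ge S'$ (the boundary regime $R<S'$ is handled by a parallel but slightly asymmetric estimate). Using a multiplicity bound of $O_\ep(B^\ep)$ on how many lattices of $\mathcal{L}(q,\kappa)$ contain any fixed $\mathbf{g}$ (extracted by adapting the argument of Lemma \ref{gensize}), I convert incidence counts into the distinct-vector statistics of Lemmas \ref{Rest} and \ref{Sest}, obtaining
\[T(R',S')\ll_\ep(Y_1Y_2)^{-3/2}Y_3^{-3}R^5R'S'B^{2+\ep}\quad\text{and}\quad T(R',S')\ll_\ep(Y_1Y_2)^{-3/2}Y_3^{-3}S^5R'S'B^{2+\ep}.\]
Taking the geometric mean of these two bounds and summing dyadically over $R'\le R$ and $S'\le S$ (with $\sum R'S'\asymp RS$) then yields
\[T\ll_\ep(Y_1Y_2)^{-3/2}Y_3^{-3}(RS)^{7/2}B^{2+\ep}.\]

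Finally, the target bound follows from the elementary inequality $(Y_1Y_2)^{1/6}Y_3\le B^{1/3}$, valid throughout the range (\ref{range2}), which shows the above is majorized by $(Y_1Y_2)^{-5/3}Y_3^{-4}(RS)^{7/2}B^{7/3+\ep}$. The hard part will be making the multiplicity bound precise: establishing rigorously that any vector satisfying (\ref{c1})--(\ref{c4}) lies in only $O_\ep(B^\ep)$ lattices $\mathsf{M}\in\mathcal{L}(q,\kappa)$, which is what bridges the distinct-vector counts $\nu_1,\nu_2$ of the previous lemmas to the per-lattice contributions needed here. The boundary dyadic regime also requires separate but similar care.
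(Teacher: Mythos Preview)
Your proposal has a genuine gap: the multiplicity bound you identify as ``the hard part''—that any fixed vector $\mathbf{g}$ satisfying (\ref{c1})--(\ref{c4}) lies in only $O_\ep(B^\ep)$ lattices $\mathsf{M}\in\mathcal{L}(q,\kappa)$—is not established anywhere and cannot be read off from Lemma \ref{gensize}. Indeed, the paper explicitly flags this in the Remark following Lemma \ref{gensize}: that lemma bounds the number of \emph{triples} $(\mathbf{g},q,\kappa)$, but says nothing about how many lattices $\mathsf{M}$ a given $\mathbf{g}$ might belong to. The lattices $\mathsf{M}$ are indexed by a choice of $\mathbf{t}\in H$, a divisor $h\mid\Delta$, and subsequent refinements; the constraint that a fixed $(x_1,x_2,y_1,y_2)$ satisfy (\ref{seq}) is a single linear equation in $\mathbf{t}$, which does not obviously cut the admissible set down to $O_\ep(B^\ep)$ elements. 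Your dyadic double-counting scheme collapses without this input.

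The paper sidesteps the issue entirely with a simpler device. While a single $\mathbf{g}$ may lie in many $\mathsf{M}$, a linearly independent \emph{pair} $(\mathbf{g},\mathbf{h})$ determines the two-dimensional lattice they span, hence determines $\mathsf{M}$. Thus for each $(q,\kappa)$ the number of admissible $\mathsf{M}$ is at most $\nu_1(q,\kappa;R)\,\nu_2(q,\kappa;S)$. Combining this with the crude bound $\card\mathcal{L}(q,\kappa)\le L\ll_\ep(Y_1Y_2)^{-1/3}Y_3^{-2}B^{2/3+\ep}$ from Lemma \ref{lemma:qkappa} via
\[\min(\nu_1\nu_2,L)\le(\nu_1\nu_2 L)^{1/2},\]
and then applying Cauchy--Schwarz to $\sum_{(q,\kappa)}(\nu_1\nu_2)^{1/2}$, one obtains the stated bound directly from Lemmas \ref{Rest} and \ref{Sest}, with no dyadic decomposition and no multiplicity assumption. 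The extra factor $L^{1/2}$ is precisely what produces the exponents $(Y_1Y_2)^{-5/3}Y_3^{-4}B^{7/3}$ rather than your (unproven) $(Y_1Y_2)^{-3/2}Y_3^{-3}B^{2}$.
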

\begin{remark*}
This bound will handle the first term on the right of (\ref{uSS}), when 
$\mathbf{g}$ and $\mathbf{h}$ are not too big.
\end{remark*}
\begin{proof}
Although it seems possible that a given vector $\mathbf{g}$ may lie
in more than one lattice $\mathsf{M}$, 
it is certainly the case that $\mathbf{g}$ and $\mathbf{h}$ together
do determine $\mathsf{M}$. Thus the number 
of lattices in $\mathcal{L}(q,\kappa)$ having basis vectors
$\mathbf{g},\mathbf{h}$ as in the lemma is at most 
$\nu_1(q,\kappa;R)\nu_2(q,\kappa;S)$.  Alternatively, Lemma
\ref{lemma:qkappa} shows that we may bound  
this number by $\card \mathcal{L}(q,\kappa)\le L$, with 
\[L\ll_\ep (Y_1Y_2)^{-1/3}Y_3^{-2}B^{2/3+\ep}.\]
Thus the number of lattices satisfying the conditions of the lemma is 
\begin{eqnarray*}
  &\le& \sum_{(q,\kappa)\not\in Q(B;R)}
  \min\big(\nu_1(q,\kappa;R)\nu_2(q,\kappa;S)\, ,\,L\big)\\
  &\le& \sum_{(q,\kappa)\not\in Q(B;R)}\big(\nu_1(q,\kappa;R)
  \nu_2(q,\kappa;S))^{1/2}L^{1/2}\\
&\le& L^{1/2}\left(\sum_{(q,\kappa)\not\in Q(B;R)}\nu_1(q,\kappa;R)\right)^{1/2}
\left(\sum_{(q,\kappa)\not\in Q(B;R)}\nu_2(q,\kappa;S)\right)^{1/2},
\end{eqnarray*}
by Cauchy's inequality. The result then follows from Lemmas \ref{Rest}
and \ref{Sest} on re-defining $\ep$. 
\end{proof}

\section{Combining all our Bounds}
In this section we combine the various bounds we have obtained.
We remind the reader that, given the coefficients $a_i$ in
(\ref{axy}), the values of $x_1,x_2,y_1,y_2$ determine at most one choice for $x_3$ 
and $y_3$, whence it suffices to count points $(x_1,x_2,y_1,y_2)$. We will write
$S_0(B)$ as a union
\[S_0(B)=S_1(B)\cup S_2(B)\cup S_3(B)\cup S_4(B)\cup S_5(B)\]
and estimate $\card S_i(B)$ separately for each index $i$. We already have
\beql{S1B}
 \card S_1(B) \ll B^{1/2}
 \eeq
from Lemma \ref{4-5},
accounting for points lying on the ``bad'' lines in Lemma \ref{except}. 
We define $S_2(B)$ to be the subset of $S_0(B)$ for which $(x_1,x_2,y_1,y_2)$ lies 
in the set $S_0(q,\kappa)$ in Lemma \ref{lemma:qkappa}. Since there are $O(Y_3^2)$
possibilities for $(q,\kappa)$ we see from (\ref{S0qk}) that
\beql{S2B}
\card S_2(B)\ll_\ep (Y_1Y_2)^{-1/3}B^{2/3+\ep},
\eeq
for any fixed $\ep>0$.

The remaining points of $S_0(B)$ are covered by Lemma \ref{useSS}. 
We give ourselves a parameter $K\in (0,1]$ to be chosen later, and take $S_3(B)$
to be the set of points in $S_0(B)\setminus\left(S_1(B)\cup S_2(B)\right)$ 
belonging to lattices $\mathsf{M}$ for which the basis vectors satisfy
$||D^{-1}\mathbf{g}||_\infty ||D^{-1}\mathbf{h}||_\infty\ge K$.
Since  Lemma \ref{lemma:qkappa}
shows that the total number of lattices $\mathsf{M}$ is
\[\ll_\ep  (Y_1Y_2)^{-1/3}B^{2/3+\ep},\]
we see that (\ref{uSS}) yields
\beql{S3B}
\card S_3(B)\ll_\ep K^{-2/3} (Y_1Y_2)^{-1/3}B^{2/3+2\ep}.
\eeq

It remains to handle lattices $\mathsf{M}$ for which 
$||D^{-1}\mathbf{g}||_\infty ||D^{-1}\mathbf{h}||_\infty\le K$.
For these we do a dyadic subdivision for the ranges of 
$||D^{-1}\mathbf{g}||_\infty$ and  $||D^{-1}\mathbf{h}||_\infty$, writing
\beql{RSd}
\tfrac12 R<||D^{-1}\mathbf{g}||_\infty \le R\;\;\;\mbox{and}
\;\;\; \tfrac12 S< ||D^{-1}\mathbf{h}||_\infty\le S,
\eeq
with $RS\ll K$. Since $||\mathbf{g}||_\infty\le||\mathbf{h}||_\infty$ we will have
$R\ll K^{1/2}$. Thus if we define $R_0=K^{1/2}\log B$ we will have $R\le R_0$ as long 
as $B$ is suitably large. We now define $S_4(B)$ to be the set of points
$(x_1,x_2,x_3,y_1,y_2,q)\in S_0(B)$ for which 
the corresponding pair $(q,\kappa)$ lies in $Q(B;R_0)$, as described in 
Lemma \ref{awkward}. It then follows from (\ref{S4b}) that
\beql{S4B}
\card S_4(B)\ll_\ep K^{1/2}(Y_1Y_2)^{1/4}B^{1/2+2\ep},
\eeq
for any fixed $\ep>0$.

Our final set $S_5(B)$ consists of points in $S_0(B)$ belonging to lattices 
$\mathsf{M}$ whose generators lie in the ranges (\ref{RSd}) with $RS\ll K$, and for which 
the corresponding values $(q,\kappa)$ do not lie in $Q(B;R_0)$. For these we have
$(q,\kappa)\not\in Q(B;R)$ {\em a fortiori}, so that the number of relevant lattices
$\mathsf{M}$ with generators in the ranges (\ref{RSd}) is
\[\ll_\ep (Y_1Y_2)^{-5/3}Y_3^{-4}(RS)^{7/2}B^{7/3+\ep}\]
by Lemma \ref{sizeRS}. Moreover each such lattice provides 
$O_\ep\left((RS)^{-2/3}B^\ep\right)$ points, by Lemma \ref{useSS}, whence 
\[\card S_5(B)\ll_\ep (Y_1Y_2)^{-5/3}Y_3^{-4}B^{7/3+2\ep}
\sum_{R,S}(RS)^{7/2-2/3},\]
the sum being over dyadic values of $R$ and $S$ with $RS\ll K$.
We therefore conclude that
\beql{S5B}
\card S_5(B)\ll_\ep (Y_1Y_2)^{-5/3}Y_3^{-4}B^{7/3+2\ep}K^{17/6}.
\eeq

We may now combine (\ref{S1B}), (\ref{S2B}), (\ref{S3B}), (\ref{S4B}), and (\ref{S5B}), and
replace $\ep$ by $\ep/2$, to show that if $0<K\le 1$ then
\[N_0(B)=\card S_0(B)\ll_\ep\left\{K^{-2/3} (Y_1Y_2)^{-1/3}B^{2/3}+
K^{1/2}(Y_1Y_2)^{1/4}B^{1/2}\right.\]
\[\hspace{4cm}\left. \mbox{}+(Y_1Y_2)^{-5/3}Y_3^{-4}B^{7/3}K^{17/6}\right\}B^{\ep}\]
for any fixed $\ep>0$, and any parameters in the ranges (\ref{range2}). We choose
\[K=(Y_1Y_2)^{8/21}Y_3^{8/7}B^{-10/21}\]
so as to equate the first and third terms on the right, and note that $K\le 1$ by (\ref{range2}).
This choice gives us
\[N_0(B)\ll_\ep\left\{(Y_1Y_2)^{-37/63}Y_3^{-16/21}B^{62/63}+
(Y_1Y_2)^{37/84}Y_3^{4/7}B^{11/42}\right\}B^{\ep}.\]
The second term on the left is smaller than the first, by (\ref{range2}).
We may therefore conclude that
\[N_0(B)\ll_\ep (Y_1Y_2)^{-37/63}Y_3^{-16/21}B^{62/63+\ep}.\]

Although we said at the outset that we would work with congruences to modulus $y_3^3$
we could just as well have used $y_1^3$ or $y_2^3$, giving us analogous bounds with 
$Y_1,Y_2$ and $Y_3$ permuted. Thus
\begin{eqnarray*}
N_0(B)^3&\ll_\ep& (Y_2Y_3)^{-37/63}Y_1^{-16/21}B^{62/63+\ep}\times
(Y_1Y_3)^{-37/63}Y_2^{-16/21}B^{62/63+\ep}\\
&&\hspace{2cm}\mbox{}\times(Y_1Y_2)^{-37/63}Y_3^{-16/21}B^{62/63+\ep}\\
&=& (Y_1Y_2Y_3)^{-122/63}B^{62/21+3\ep},
\end{eqnarray*}
so that $N_0(B)\ll_\ep (Y_1Y_2Y_3)^{-122/189}B^{62/63+\ep}$.
We now see from Lemma \ref{YYY} that either $N_0(B)\ll_\ep B^{1/2+\ep}$ or
\begin{eqnarray*}
N_0(B) &=& N_0(B)^{122/311}N_0(B)^{189/311}\\
&\ll_\ep &(Y_1Y_2Y_3B^\ep)^{122/311}
\left\{(Y_1Y_2Y_3)^{-122/189}B^{62/63+\ep}\right\}^{189/311}\\
&=&B^{186/311+\ep}.
\end{eqnarray*}
In view of (\ref{0}) this suffices for Theorem \ref{thm2}, since $186/311=3/5-3/1555$.
\bigskip

We conclude by observing that the critical case for any improvement on Theorem \ref{thm2}
is now that in which $|a_1|=|a_2|=|a_3|=1$,
\[X_1=X_2=X_3=B^{125/622}=B^{1/5+3/3110},\]
and
\[Y_1=Y_2=Y_3=B^{62/311}=B^{1/5-1/1555}.\]

Mathematical Institute, 

Radcliffe Observatory Quarter, 

Woodstock Road, 

Oxford  OX2~6GG,

UK
\bigskip

 {\tt rhb@maths.ox.ac.uk}


\begin{thebibliography}{xx}

\bibitem{BHB}
T.D. Browning and D.R. Heath-Brown,
Counting rational points on hypersurfaces. {\em J. Reine Angew. Math.}, 584 (2005), 83--115.

\bibitem{BVV}
T.D. Browning and K. Van Valckenborgh, 
Sums of three squareful numbers. {\em Experimental Math.},  21 (2012), 204--211.

\bibitem{CP}
H. Cohen and F. Pazuki,
Elementary 3-descent with a 3-isogeny. {\em Acta Arith.}, 104 (2009), 369--404.

\bibitem{HBss}
D.R. Heath-Brown,
The square sieve and consecutive square-free numbers. {\em
  Math. Ann.}, 266 (1984), 251--259. 

\bibitem{hbDASF}
D.R. Heath-Brown,
Diophantine approximation with square-free numbers. {\em Math. Z.},
187 (1984), no. 3, 335--344. 

\bibitem{HBcub}
D.R. Heath-Brown,
Counting rational points on cubic surfaces. Nombre et r\'{e}partition
de points de hauteur  
born\'{e}e (Paris, 1996). {\em  Ast\'{e}risque}, No. 251 (1998), 13--30.

\bibitem{annals}
D.R. Heath-Brown,
The density of rational points on curves and surfaces. 
{\em Ann. of Math. (2)}, 155 (2002), no. 2, 553--595.

\bibitem{poonen}
B. Poonen. 
The projective line minus three fractional points. 
Available online 
(http://www-math.mit.edu/?poonen/slides/campana s.pdf), 
July 2006.

\bibitem{vaughan}
R.C. Vaughan,
Integer points on elliptic curves.
{\em Rocky Mountain J. Math.}, 44 (2014), no. 4, 1377--1382.


\bibitem{zhao}
Xiaodong Zhao, 
The upper bound on sums of three squareful numbers.
{\em  Ramanujan J. 68 (2025)}, no. 1, Paper No. 7, 5 pp. 



 \end{thebibliography}
\end{document}